\documentclass[12pt]{article}
\usepackage{amsthm}
\usepackage{varioref}
\usepackage{amsmath,amstext,amsthm,a4,amssymb,amscd}   
\usepackage[T1]{fontenc}
\usepackage[utf8]{inputenc}
\usepackage{lmodern}
\usepackage{array}
\usepackage{latexsym}
\usepackage{fancyhdr}
\usepackage{mathrsfs}\let\cal\mathscr
\usepackage[all]{xy}
\usepackage{makeidx}   
\usepackage{color}
\usepackage{pifont}
\usepackage{amsfonts,amssymb}
\usepackage{hyperref}
\usepackage{cleveref}
\usepackage[numbers,square]{natbib}
\usepackage[disable]{todonotes}
\usepackage{verbatim}
\usepackage{relsize}

\usepackage{avant}
\usepackage[T1]{fontenc}      

\usepackage{amsfonts,amssymb}  
 
\usepackage{graphicx}

\usepackage{natbib}
\setlength{\bibsep}{0.0pt}

\addtolength{\topmargin}{-0.8cm}
\addtolength{\oddsidemargin}{-0.1cm}
\addtolength{\oddsidemargin}{-0.1cm}
\addtolength{\textwidth}{1.2cm}
\addtolength{\textheight}{0.6cm}

\newcommand \Om {\Omega}
\newcommand \om {\omega}
\newcommand \0 {\emptyset}

\renewcommand \i {\sqrt{-1}}
\renewcommand \leq {\leqslant}
\renewcommand \geq {\geqslant}
\newcommand{\longhookrightarrow}{\lhook\joinrel\longrightarrow}

\newcommand{\norm}[1]{\left\Vert #1\right\Vert}

\DeclareMathOperator{\Vol}{Vol}
\DeclareMathOperator{\End}{End}

\DeclareMathOperator{\Tr}{Tr}

\DeclareMathOperator{\diag}{diag}

\DeclareMathOperator{\Lie}{Lie}

\DeclareMathOperator{\Ad}{Ad}

\DeclareMathOperator{\Herm}{Herm}

\DeclareMathOperator{\sym}{sym}

\newcommand \dbar {\overline{\partial}}

\newcommand \< {\mathcal{h}}
\renewcommand \> {\mathcal{i}}

\newcommand \cinf {\CC^\infty}

\newcommand \Id {{\rm Id}}

\renewcommand \epsilon {\varepsilon}
\renewcommand \AA {{\cal A}}
\newcommand \CC {{\cal C}}

\newcommand \OO {{\mathcal O}}

\def\g{\mathfrak{g}}
\def\t{\mathfrak{t}}
\def\u{\mathfrak{u}}

\def\Im{{\rm Im}}

\newcommand \R {\mathbb R}

\newcommand \CP {\mathbb C\mathbb P}
\newcommand \C {\mathbb C}

\newcommand \N {\mathbb N}

\newcommand \Z {\mathbb Z}

\newcommand \IT {\mathbb T}

\newcommand \fl {\rightarrow}

\newcommand \ignore[1] {}

\theoremstyle{plain}
\newtheorem{theorem}{Theorem}[section]

\newtheorem{cor}[theorem]{Corollary}
\newtheorem{prop}[theorem]{Proposition}
\theoremstyle{definition}
\newtheorem{ackn*}[theorem]{Acknowledgments}
\newtheorem{defi}[theorem]{Definition}

\numberwithin{equation}{section}

\crefname{equation}{}{}
\crefname{lem}{Lemma}{Lemmas}
\crefname{theorem}{Theorem}{Theorems}
\crefname{cor}{Corollary}{Corollaries}
\crefname{ex}{Example}{Examples}
\crefname{defi}{Definition}{Definitions}
\crefname{prop}{Proposition}{Propositions}
\crefname{section}{Section}{Sections}
\crefname{subsection}{Section}{Sections}
\crefname{rmk}{Remark}{Remarks}
\crefname{nota}{Notation}{Notations}

\hypersetup{
    colorlinks=true,
    linkcolor=red,
    filecolor=magenta,      
    urlcolor=cyan,
}

\begin{document}

\title{\bf{Asymptotics of unitary matrix elements in canonical bases
}}
\author{Louis IOOS$^1$}
\date{}

\maketitle

\newcommand{\Addresses}{{
  \bigskip
  \footnotesize

  \textsc{Université de Cergy-Pontoise, 95000 Cergy,
France}\par\nopagebreak
  \textit{E-mail address}: \texttt{louis.ioos@cyu.fr}

}}

\footnotetext[1]{partially supported by ANR-23-CE40-0021-01 JCJC
project QCM}

\begin{abstract}

We compute the asymptotics of matrix elements in canonical
bases of irreducible representations
of the unitary group as the highest weight goes to infinity,
in terms of the symplectic geometry of the associated coadjoint orbit.
This uses tools of Berezin-Toeplitz quantization, and
recovers as a special case the asymptotics of Wigner's
$d$-matrix elements for the spin representations in quantum mechanics.

\end{abstract}

\section{Introduction}

Let $n\in\N^*$, and consider the sequence of inclusions
\begin{equation}\label{GLinc}
U(1)\subset U(2)\subset\cdots\subset U(n)
\end{equation}
of the unitary groups,
where for any $k\in\N$ with $k\leq n$,
the subgroup $U(k)\subset U(n)$
acts on the canonical decomposition
$\C^n=\C^k\oplus\C^{n-k}$
in the usual way on the
first factor and as the identity on the second.
A classical result of Weyl, which we recall in \cref{Weylprop},
states that any unitary irreducible representation
$V$ of the unitary group $U(n)$ admits a unique decomposition
\begin{equation}
V|_{U(n-1)}=\bigoplus_{j\in J}\,V_j
\end{equation}
into distinct irreducible representations $V_j$ of $U(n-1)\subset U(n)$
by restriction,
so that each irreducible representation of $U(n-1)$ appears with
multiplicity at most $1$ in this decomposition. Applying the same result to
$V_j$ for each $j\in J$, this further implies that
$V_j$ decomposes as a direct sum of distinct irreducible representations
of $U(n-2)\subset U(n-1)$ as before.
By downward iteration along the
sequence of inclusions \cref{GLinc}, we thus obtain a canonical decomposition
\begin{equation}\label{GZdecflaintro}
V=\bigoplus_{\nu\in\Gamma}\,\C_\nu\,,
\end{equation}
where each component $\C_\nu\subset V$ is of complex dimension $1$
as an irreducible representation of $U(1)\subset U(n)$. This decomposition
was implicitly used by Gelfand and Zetlin in \cite{GZ50} to establish explicit
formulas for matrix elements of irreducible representations of $U(n)$,
and a basis $\{e_\nu\}_{\nu\in\Gamma}$ of $V$ compatible with the decomposition
\cref{GZdecflaintro} is called a \emph{Gelfand-Zetlin basis}.

The main results of this paper are \cref{mainth,toricth},
establishing the asymptotics of matrix elements in the Gelfand-Zetlin bases of
unitary irreducible representations of $U(n)$ as the \emph{highest weight}
goes to infinity. We compute these asymptotics in terms of
the symplectic geometry of the associated
\emph{coadjoint orbit} $X\subset\u(n)^*$ for the
coadjoint action of $U(n)$ on the dual $\u(n)^*$ of its Lie algebra,
endowed with its natural symplectic form
$\om\in\Om^2(X,\R)$ given in \cref{KKSprop}.
Assuming for simplicity that $X$ has maximal dimension
among coadjoint orbits in $\u(n)^*$,
Guillemin and Sternberg constructed in \cite{GS83} a remarquable
integrable system on $(X,\om)$, called the \emph{Gelfand-Zetlin system}
and given by a continuous map
\begin{equation}\label{GZsystintro}
M:X\longrightarrow\Delta\subset\R^{\frac{n(n-1)}{2}}\,,
\end{equation}
which we describe in \cref{GZsystemprop},
whose image $\Delta:=\Im(M)$ is a convex polytope,
called the \emph{Gelfand-Zetlin polytope}. The map \cref{GZsystintro}
is smooth over the interior $\Delta^0\subset\Delta$, and induces
\emph{action-angle coordinates} over the open dense subset
$M^{-1}(\Delta_0)\subset X$. In particular, for any $v\in\Delta^0$,
the fibre $M^{-1}(v)\subset X$
is a Lagrangian torus inside $(X,\om)$.
In \cite[\S\,6]{GS83}, Guillemin and Sternberg
establish a natural bijective correspondence
\begin{equation}\label{W=DZ}
\Gamma\simeq\Delta\cap\Z^{\frac{n(n-1)}{2}}
\end{equation}
between the components of the Gelfand-Zetlin decomposition
\cref{GZdecflaintro} of an irreducible representation $V$
of $U(n)$ and the integral points inside the image of the map \cref{GZsystintro} over the corresponding coadjoint orbit $X$.
In the heuristics of \emph{Geometric Quantization}, the coadjoint orbit 
represents a phase space of classical mechanics, while the irreducible representation
represents the corresponding space of quantum states,
called the \emph{holomorphic quantization} of $(X,\om)$.
The classical integrable system
\cref{GZsystintro} over $(X,\om)$ then corresponds to the quantum integrable
system on $V$ given by the commutative algebra of diagonal operators
in the decomposition \cref{GZdecflaintro}, and the correspondence
\cref{W=DZ} can then be interpreted as the old quantum-classical
correspondence between integral orbits of a classical integrable system
and eigenstates of the corresponding quantum integrable system.

The general idea behind the main \cref{mainth,toricth} of this paper
sits in this context of Geometric Quantization, and consists
in studying the so-called \emph{semiclassical limit}, which is the regime
where the scale becomes so large that one recovers the laws of classical
mechanics
from the laws of quantum mechanics. To describe this semiclassical limit,
let us first recall another fundamental result of Weyl, which we recall in
\cref{highestwghtth},
giving a natural parametrization of the unitary irreducible representations
of $U(n)$ by the $n$-tuples of integers $\lambda=(\lambda_1,\lambda_2,\cdots,\lambda_n)\in\Z^n$ satisfying $\lambda_1\geq\lambda_2\geq\cdots\geq\lambda_n$,
called the \emph{highest weight} of the corresponding representation,
denoted by $V(\lambda)$.
On the other hand, given such a weight $\lambda\in\Z^n$, one can consider
the coadjoint orbit $X_\lambda\subset\u(n)^*$ passing through the diagonal
matrix $\diag(\lambda_1,\lambda_2,\cdots,\lambda_n)\in\u(n^*)$
under the natural identification \cref{Herm=un}
of $\u(n)^*$ with the space of $n\times n$ Hermitian matrices.
Let us assume that the weight $\lambda\in\Z^n$ is
\emph{regular}, so that $\lambda_1>\lambda_2>\cdots>\lambda_n$,
and consider the integral part of the \emph{half-sum of roots} of $\u(n)$, defined by
\begin{equation}\label{rhobar}
\overline{\rho}:=\left(\left\lceil{\frac{n-1}{2}}\right\rceil,
\left\lceil{\frac{n-3}{2}}\right\rceil,\cdots,
-\left\lceil{\frac{n-3}{2}}\right\rceil,
-\left\lceil{\frac{n-1}{2}}\right\rceil\right)\in\Z^n\,.
\end{equation}
As we explain at the end of \cref{GZsystsec},
one gets that
\begin{equation}\label{WplsubDl}
\frac{1}{p}\Gamma_{p}:=\frac{1}{p}\left(\Delta_{p\lambda+\overline{\rho}}\cap\Z^{\frac{n(n-1)}{2}}\right)
\end{equation}
becomes dense in $\Delta_\lambda\subset\R^{\frac{n(n-1)}{2}}$ as $p\in\N$ tends to infinity,
where $\Delta_\lambda\subset\R^{\frac{n(n-1)}{2}}$ denotes the Gelfand-Zetlin polytope
\cref{GZsystintro}
of the coadjoint orbit $X_\lambda\subset\u(n)^*$.
This type of behavior as $p\to+\infty$ is characteristic of a semiclassical limit
in Geometric Quantization.
Note from \cref{W=DZ} that \cref{WplsubDl} parametrizes the components
of the Gelfand-Zetlin decomposition \cref{GZdecflaintro} of the
irreducible representation $V(p\lambda+\overline{\rho})$ of $U(n)$
with highest weight $p\lambda+\overline{\rho}$.
The alternative parametrization of irreducible representations of $U(n)$
where one includes a shift by the half-sum of roots is sometimes called
the \emph{Harish-Chandra parametrization}.

For any $v\in\Delta_\lambda^0$,
where $\Delta_\lambda^0\subset\Delta_\lambda$ denotes the interior,
write
\begin{equation}
\Lambda_v:=M^{-1}(v)\subset X
\end{equation}
for the associated Lagrangian fibre of the Gelfand-Zetlin system
\cref{GZsystintro},
and for each
$1\leq j\leq \frac{n(n-1)}{2}$,
write $M_j\in\cinf(X,\R)$ for the $j^{\text{th}}$ component of the map
\eqref{GZsystintro} in $\R^{\frac{n(n-1)}{2}}$.
Write $\{\cdot,\cdot\}$ for the Poisson bracket on
$\cinf(X_\lambda,\R)$ induced by the symplectic form of $(X_\lambda,\om)$.
The first main result of this paper is the following.

\begin{theorem}\label{mainth}
Let $\lambda=(\lambda_1,\lambda_2,\cdots,\lambda_n)\in\Z^n$ satisfy
$\lambda_1>\lambda_2>\cdots>\lambda_n$,
let
$\{e_\nu\}_{\nu\in \Gamma_{p}}$ be a Gelfand-Zetlin basis of the unitary
irreducible
representation of $U(n)$ with highest weight $p\lambda+\overline{\rho}$
for all $p\in\N$,
and let $(v_p,\,w_p\in \frac{1}{p}\,\Gamma_{p})_{p\in\N}$ be two sequences
respectively 
converging to some $v,\,w\in\Delta_\lambda^0\subset\Delta_\lambda$.

Then for any $g\in U(n)$ such that
$g\Lambda_{v}\cap\Lambda_{w}=\0$ and for any $k\in\N$,
there exists $C_k>0$ such that for all $p\in\N$, we have
\begin{equation}\label{mainflaexp}
\left|\<ge_{pv_p},e_{pw_p}\>\right|\leq C_k\,p^{-k}\,.
\end{equation}

For any $g\in U(n)$ such that
$g\Lambda_{v}$ intersects $\Lambda_{w}$
transversally,
there
are scalars $z_p\in\C^*$ with $|z_p|=1$ and points
$x_p\in g\Lambda_{v_p}\cap\Lambda_{w_p}$
for each $p\in\N$ such that
we have the following asymptotic expansion as $p\to+\infty$,
\begin{equation}\label{mainfla}
p^{\frac{n}{2}}\<ge_{pv_p},e_{pw_p}\>=z_p\sum_{x\in g\Lambda_{v_p}\cap\,\Lambda_{w_p}}
\frac{\sqrt{-1}^{\kappa(x)} e^{2\pi\sqrt{-1} p\eta(x)}}{
\big|\det\,(\{g_*M_j,M_k\}(x))_{j,k}\big|^{\frac{1}{2}}}+O(p^{-1})\,,
\end{equation}
where for any $x\in g\Lambda_{v_p}\cap\Lambda_{w_p}$, the real number
$\eta(x)\in\R$ is the symplectic area of a disk whose boundary is given by a path
in $g\Lambda_{v_p}$ going from
$x_p\in g\Lambda_{v_p}\cap\Lambda_{w_p}$ to
any $x\in g\Lambda_{v_p}\cap\Lambda_{w_p}$
followed by a path in $\Lambda_{w_p}$ returning to $x_p\in g\Lambda_{v_p}\cap\Lambda_{w_p}$, and $\kappa(x)\in\Z/4\Z$ is a Maslov index along this path.
\end{theorem}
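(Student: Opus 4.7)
The strategy is to realize the irreducible representation $V(p\lambda+\overline{\rho})$ as the space of holomorphic sections of a power of a prequantum line bundle on the coadjoint orbit $X_\lambda$, to identify the Gelfand-Zetlin basis vectors $e_{pv_p}$ with semiclassical Lagrangian states associated to the Bohr-Sommerfeld fibres $\Lambda_{v_p}$, and then to analyse the matrix elements $\langle g e_{pv_p}, e_{pw_p}\rangle$ as inner products of such Lagrangian states by means of the stationary phase principle.

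First, by the Borel-Weil theorem one identifies $V(p\lambda+\overline{\rho})$ with $H^0(X_\lambda, L_\lambda^p \otimes K_{X_\lambda}^{1/2})$, where $L_\lambda$ is a prequantum line bundle of curvature $-2\pi\sqrt{-1}\,\om$ on $X_\lambda$; the shift by $\overline{\rho}$ is precisely the metaplectic correction given by the square root of the canonical bundle. The $U(n)$-action is then the natural lifted action on sections induced by the coadjoint action on $X_\lambda$.

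Second, the components $M_j$ of the Gelfand-Zetlin map \cref{GZsystintro} are the principal symbols of a commuting family of Berezin-Toeplitz operators on $H^0(X_\lambda, L_\lambda^p \otimes K_{X_\lambda}^{1/2})$ whose joint spectral decomposition recovers the Gelfand-Zetlin decomposition \cref{GZdecflaintro}, with $e_{pv_p}$ the joint eigenvector of eigenvalues asymptotic to $p\,v_p$. Since $v\in\Delta_\lambda^0$ is a regular value of $M$, the common level set $\Lambda_{v_p}$ is a smooth Lagrangian torus for $p$ large, and the semiclassical theory of Lagrangian states for Berezin-Toeplitz integrable systems (in the spirit of works of Charles and of Borthwick-Paul-Uribe) identifies $e_{pv_p}$ with a Lagrangian state whose principal symbol is concentrated on $\Lambda_{v_p}$, determined up to an overall phase $z_p$ of unit modulus.

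Third, for any $g\in U(n)$ the image $g\cdot e_{pv_p}$ is a Lagrangian state associated to $g\Lambda_{v_p}$, and the matrix element $\langle g e_{pv_p}, e_{pw_p}\rangle$ can be written as an oscillatory integral on $X_\lambda$ whose phase is stationary exactly at the intersection points $g\Lambda_{v_p}\cap\Lambda_{w_p}$. When $g\Lambda_v\cap\Lambda_w=\emptyset$, the hypothesis that $v_p\to v$ and $w_p\to w$ implies that for $p$ large enough the supports of the two Lagrangian states are disjoint in an open neighbourhood, so repeated integration by parts yields the bound \cref{mainflaexp}. When $g\Lambda_v$ and $\Lambda_w$ intersect transversally, each intersection point is a non-degenerate critical point of the phase; after identifying tangent spaces, its Hessian is the symplectic pairing between $T_x(g\Lambda_v)$ and $T_x\Lambda_w$, which in action coordinates is exactly the matrix of Poisson brackets $(\{g_* M_j, M_k\}(x))_{j,k}$. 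The stationary phase method then yields \cref{mainfla}: the modulus of the Hessian determinant produces the denominator, the Maslov index $\kappa(x)\in\Z/4\Z$ arises from the argument of the square root of this determinant, and the symplectic area $\eta(x)$ appears when propagating the Bohr-Sommerfeld phase between different intersection points along paths in the two Lagrangians, with the base-point ambiguity absorbed in the global unimodular scalar $z_p$.

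The main obstacle is step two: proving that the Gelfand-Zetlin basis vectors $e_{pv_p}$ are genuine Lagrangian states with a well-controlled principal symbol, and not merely microlocally concentrated near $\Lambda_{v_p}$, uniformly as the fibres $\Lambda_{v_p}$ drift with $p$. Because the Gelfand-Zetlin system is only smooth on $M^{-1}(\Delta_\lambda^0)$, one must stay away from the singular fibres throughout the construction; tracking the metaplectic half-form carefully is also necessary to obtain the correct normalization and the right value of the Maslov index $\kappa(x)$ appearing in \cref{mainfla}.
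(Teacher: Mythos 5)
Your proposal follows essentially the same route as the paper: Borel–Weil with the metaplectic shift $\overline{\rho}$, identification of the Gelfand–Zetlin basis vectors as joint eigenvectors of commuting Berezin–Toeplitz operators and hence as Lagrangian states on the Bohr–Sommerfeld fibres (via the results of Charles), and the stationary-phase intersection formula for the pairing of two such states. The only point worth noting is that the paper quantizes the smooth symmetric functions $H_j^{(k)}=\mu_k^*s_j^{(k)}$ (symmetrized in the enveloping algebra, with eigenvalues controlled through the Harish–Chandra isomorphism) rather than the merely continuous eigenvalue coordinates $M_j$ directly, recovering the latter only over $\Delta_\lambda^0$ through the action-angle change of variables — exactly the obstacle you flag at the end.
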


\cref{mainth} is proved in \cref{proofmainthsec} for
\emph{regular weights}
$\lambda\in\Z^n$, so that $\lambda_1>\lambda_2>\cdots>\lambda_n$,
which corresponds to the fact that the associated coadjoint orbit $X_\lambda\subset\u(n)^*$
is of maximal dimension among all coadjoint orbits
in $\u(n)^*$. In
\cite[p.\,122]{GS83} and \cite[p.\,229]{GS83b},
Guillemin and Sternberg indicate how to extend their construction of the
Gelfand-Zetlin system \cref{GZsystintro} to general coadjoint orbits,
and the proof of \cref{mainth}
described in \cref{proofmainthsec} should then readily extend to cover these
cases. In this paper, we do not consider the general singular case for simplicity,
since only the regular case is described precisely
enough for our purposes in \cite{GS83}.

However, in the most singular case of
$\lambda=(\lambda_1,0,\cdots, 0)\in\Z^n$,
the geometric picture greatly simplifies, and the second main result of our paper is
an extension and refinement of \cref{mainth} to that case.
In fact, the Gelfand-Zetlin
decomposition \cref{GZdecflaintro} of the irreducible representation
of $U(n)$ with highest weight $(\lambda_1,0,\cdots,0)$ coincides with its \emph{weight decomposition}
into irreducible representations of the $(n-1)$-dimensional
subtorus $T_0\subset U(n)$
consisting of diagonal matrices with highest-left coefficient equal to $1$.
On the other hand, the coadjoint action of $T_0\subset U(n)$ makes
$X_\lambda\subset\u(n)^*$ into a \emph{toric manifold}, and the
Gelfand-Zetlin system \cref{GZsystintro} coincide
with the associated \emph{moment map}
\begin{equation}\label{Mtoric}
\mu:X_\lambda\longrightarrow\Delta_\lambda\subset\R^{n-1}\,,
\end{equation}
so that the Gelfand-Zetlin polytope coincides with the
\emph{Delzant polytope} of $(X_\lambda,\om)$ as a toric manifold.
The correspondence \cref{W=DZ} then translates into the well-known
fact that the weights for the torus action of $T_0\subset U(n)$ on
the holomorphic quantization of a toric manifold are given by the integral points
inside its Delzant polytope. In the case at hand, the coadjoint orbit
$(X_\lambda,\om)$ naturally identifies as a toric manifold
with the projective space $\CP^{n-1}$ of complex
lines inside $\C^{n}$ endowed with is canonical symplectic form
of volume $\lambda_1\in\N$.

In the following theorem, we assume without loss of generality that
$\lambda=(1,0,\cdots 0)$, so that as we explain in \cref{projsec},
for any $p\in\N$ we have
\begin{equation}\label{WplsubDlintro}
\frac{1}{p}\,\Gamma_{p}=\Delta\cap\Big(\frac{1}{p}\,\Z^{n-1}\Big)\,,
\end{equation}
where $\Gamma_p\subset\Z^n$ is the set of weights
for the action of $T_0\subset U(n)$
on the irreducible representation of $U(n)$
with highest weight $(p,0,\cdots 0)\in\Z^n$
and $\Delta:=\Delta_{(1,0,\cdots,0)}\subset\R^{n-1}$
is the Delzant polytope of the standard projective space
$\CP^{n-1}$.
For any $v\in\Delta$, we write
\begin{equation}
\Lambda_v:=\mu^{-1}(v)\subset\CP^{n-1}
\end{equation}
for the associated fibre of the corresponding moment map \cref{Mtoric}.
The second main result of this paper is the following.

%

\begin{theorem}\label{toricth}
For every $p\in\N$,
let $\{e_\nu\}_{\nu\in \Gamma_p}$ be a weight basis
for the torus action $T_0\subset U(n)$ on the irreducible
representation of $U(n)$ with highest weight $(p,0,\cdots 0)\in\Z^n$,
and let $(v_p,\,w_p\in \frac{1}{p}\,\Gamma_p)_{p\in\N}$
be two sequences belonging to the same face
of $\Delta$ after some rank and
respectively converging to some
$v,\,w\in\Delta$.

Then for any $g\in U(n)$ such that
$g\Lambda_{v}\cap\Lambda_{w}=\0$ and for any $k\in\N$,
there exists $C_k>0$ such that for all $p\in\N$, we have
\begin{equation}\label{mainflaexptoric}
\left|\<ge_{pv_p},e_{pw_p}\>\right|\leq C_k\,p^{-k}\,.
\end{equation}

For any $g\in U(n)$ such that
$g\Lambda_{v}$ intersects $\Lambda_{w}$ cleanly,
writing $g\Lambda_{v}\cap\Lambda_{w}=\cup_{q=0}^m Y^{(q)}$
for the decomposition into connected components,
there exist $\kappa_q\in\Z/4\Z$ and $b_{p}^{(q)}\in\C$
for each $0\leq q\leq m$,
as well
as scalars $z_p\in\C^*$ with $|z_p|=1$ for all $p\in\N$,
such that the following asymptotic expansion holds
as $p\fl+\infty$,
\begin{equation}\label{<e1e2>}
\<g e_{pv_p},e_{pw_p}\>
=z_p\,p^{-\frac{\dim\Lambda^{(1)}+\dim\Lambda^{(2)}}{4}}
\sum_{q=1}^m p^{\frac{\dim Y_q}{2}}
\sqrt{-1}^{\kappa_q}e^{2\pi\sqrt{-1}p\eta_{p}^{(q)}}
\left(b_{p}^{(q)}+O(p^{-1})\right)
\end{equation}
where $\eta_{p}^{(q)}\in\R$ is the symplectic area of a
disk whose boundary is given by a path
in $g\Lambda_{v_p}$ going from
any point $x_p\in g\Lambda_{v_p}\cap\Lambda_{w_p}$
approaching $Y^{(0)}$ as $p\to+\infty$
to any point $y_p\in g\Lambda_{v_p}\cap\Lambda_{w_p}$
approaching $Y^{(q)}$ as $p\to+\infty$
followed by a path in $\Lambda_{w_p}$ returning to
$x_p\in g\Lambda_{v_p}\cap\Lambda_{w_p}$.

In case $n\in\N^*$ is even, let
$\{e_\nu\}_{\nu\in \Gamma_{p-\frac{n}{2}}}$ be a weight basis
of the irreducible
representation of $U(n)$ with highest weight
$(p-\frac{n}{2},0,\cdots 0)\in\Z^n$ for all $p\in\N$ instead,
and let $(v_p,\,w_p\in \frac{1}{p}\,\Gamma_{p-\frac{n}{2}})_{p\in\N}$
be two sequences respectively 
converging to some $v,\,w\in\Delta^0\subset\Delta$
Then for any $g\in U(n)$ such that $g\Lambda_v$ intersects $\Lambda_w$
transversally, we have the following asymptotic expansion as $p\to+\infty$,
\begin{equation}\label{toricfla}
p^{\frac{n}{2}}\<g e_{pv_p},e_{pw_p}\>
=z_p\sum_{q=1}^m \sqrt{-1}^{\kappa_q}
\frac{e^{2\pi\sqrt{-1}p\eta_{p}^{(q)}}}{
\big|\det\left(\alpha^{(q)}_p([\Ad_g\xi_j,\xi_k])\right)_{j,\,k}\big|^{\frac{1}{2}}}+O(p^{-1})\,,
\end{equation}
where $\{\xi_j\in\t_0\}_{j=1}^{n-1}$ is a basis of the integral lattice
inside the Lie subalgebra $\t_0\subset\u(n)$ of $T_0\subset U(n)$ and where
$\alpha^{(q)}_p\in\u(n)^*$ is such that $Y^{(q)}_p=\{\alpha^{(q)}_p\}$ for all
$0\leq q\leq m$.
\end{theorem}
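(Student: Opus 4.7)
The plan is to exploit the toric structure to realise the weight basis as an explicit family of Lagrangian/isotropic states on $(\CP^{n-1},\om)$, and then to compute their inner products via a clean-intersection stationary phase adapted from the proof of \cref{mainth}.

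First, I would identify the representation $V(p,0,\dots,0)$ with $H^0(\CP^{n-1},\mathcal{O}(p))$ and write the weight basis $\{e_\nu\}_{\nu\in\Gamma_p}$ as normalized monomials in homogeneous coordinates, with $\alpha\in\N^n$, $|\alpha|=p$, in bijection with $\nu$. An asymptotic analysis of the monomial $L^2$-norms via Stirling's formula, or equivalently the peak-section construction of Tian and Ma--Marinescu, shows that $e_{pv_p}$ is microlocally concentrated on the $T_0$-orbit $\Lambda_v=\mu^{-1}(v)$: for $v$ in the relative interior of a $k$-face of $\Delta$, this orbit is a $k$-torus inside the corresponding toric subvariety, and $e_{pv_p}$ appears as an isotropic WKB state of the form $e^{2\pi\sqrt{-1}p\phi}a$ with the standard $p^{-\dim\Lambda_v/4}$ normalization. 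The assumption that $v_p,w_p$ lie on a common face after some rank ensures that the relevant classical geometry does not jump in $p$.

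For the first claim, the hypothesis $g\Lambda_v\cap\Lambda_w=\0$ combined with the compactness of the two orbits gives disjoint open neighbourhoods of $g\Lambda_v$ and $\Lambda_w$. The Gaussian concentration of the two WKB states in these neighbourhoods, together with the off-diagonal decay of the Bergman kernel of $\mathcal{O}(p)\to\CP^{n-1}$, then yields $|\<ge_{pv_p},e_{pw_p}\>|=O(p^{-k})$ for every $k\in\N$ by a cut-off and integration-by-parts argument standard in the theory of Lagrangian distributions.

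For the second claim, I would localize near each connected component $Y^{(q)}\subset g\Lambda_v\cap\Lambda_w$ of the clean intersection, choose Darboux-type coordinates along and transverse to $Y^{(q)}$, write the two isotropic states as WKB expressions $e^{2\pi\sqrt{-1}p\phi_i}a_i$ with $d\phi_1-d\phi_2$ vanishing exactly along $Y^{(q)}$, and apply H\"ormander's clean-intersection stationary phase. The phase at the stationary manifold is the symplectic area $\eta^{(q)}_p$ of the bounding disk described in the statement, since $\phi_1-\phi_2$ is a primitive for the Liouville form along the path; the Maslov index $\kappa_q\in\Z/4\Z$ comes from the signature of the transverse Hessian. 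Integrating along the tangential directions of $Y^{(q)}$ produces the factor $p^{\dim Y_q/2}$, while the two state normalizations $p^{-\dim\Lambda_v/4}$ and $p^{-\dim\Lambda_w/4}$ combine into the global prefactor $p^{-(\dim\Lambda^{(1)}+\dim\Lambda^{(2)})/4}$ of \cref{<e1e2>}. The coefficients $b_p^{(q)}$ are the principal symbols of the two isotropic states restricted to $Y^{(q)}$.

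For the even-$n$ formula \cref{toricfla}, the Harish-Chandra shift $(p-\tfrac{n}{2},0,\dots,0)$ together with the interior assumption $v,w\in\Delta^0$ makes each of $\Lambda_v,\Lambda_w$ a full Lagrangian $(n-1)$-torus with $0$-dimensional transverse intersection, so \cref{toricfla} becomes a direct specialisation of \cref{mainfla}. The determinant takes the form $\det(\alpha^{(q)}_p([\Ad_g\xi_j,\xi_k]))$ because in the toric setting each $M_j$ is affine-linear in moment coordinates, so by Kirillov--Kostant--Souriau one has $\{g_*M_j,M_k\}(x)=\alpha([\Ad_g\xi_j,\xi_k])$ at the point $\alpha\in\u(n)^*$ corresponding to $x$ under $\CP^{n-1}\simeq X_\lambda\subset\u(n)^*$. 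The main obstacle is the clean-intersection stationary phase near the boundary of $\Delta$: the isotropic states there live on toric subvarieties of $\CP^{n-1}$, and one must carefully track how the Gaussian profile in the symplectic normal directions to $\Lambda_v$ interacts with the local K\"ahler geometry of those subvarieties in order to recover the claimed scaling in $p$ uniformly in the sequences $v_p,w_p$.
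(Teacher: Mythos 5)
Your overall strategy --- realise the weight vectors as states concentrated on the Bohr--Sommerfeld fibres of the moment map and evaluate the pairing by a clean-intersection stationary phase --- is the paper's strategy, and the geometric ingredients you name (symplectic area as the phase, Maslov index from the transverse Hessian, the factor $p^{\dim Y^{(q)}/2}$ from the tangential integration, and the Kirillov--Kostant--Souriau identity $\om_\alpha(\Ad_g\xi_j^{X},\xi_k^{X})=\alpha([\Ad_g\xi_j,\xi_k])$ behind the determinant in \cref{toricfla}) all match. The technical route differs, though. The paper never approximates the monomials by WKB states: \cref{BStoric} shows that the isotropic state $s_{\Lambda_{v_p}}$ of \cref{Lagstate}, obtained by applying the Bergman kernel to a flat section over the fibre, lies \emph{exactly} in the one-dimensional weight space $\C_{pv_p}$ and is nonzero --- a purely algebraic consequence of $T_0$-equivariance of the Bergman projection plus a nonvanishing check against the monomial via the reproducing property \cref{rep}. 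The weight vector is then exactly $z_p\,s_{\Lambda_{v_p}}/\|s_{\Lambda_{v_p}}\|$, and the asymptotics are read off from the general theorems on isotropic states (\cref{theonorme,theointergal,corintergal}, imported from \cite{Ioo18b}), together with a parallel-transport/Stokes computation identifying $h^{L^p}(\zeta_1^p,\zeta_2^p)$ with $e^{2\pi\sqrt{-1}p\eta_{p}^{(q)}}$ times a constant. Your route is more self-contained and explicit for $\CP^{n-1}$, but it must re-prove the stationary-phase machinery; the paper's route buys the full expansion once the exact identification is made, and is the one that generalises to \cref{mainth}.

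Two points in your sketch need more than is written. First, to obtain the relative error $O(p^{-1})$ in \cref{<e1e2>} and the precise leading coefficient in \cref{toricfla}, a leading-order concentration statement from Stirling is not enough: you need the WKB approximation of the normalised monomial to one order beyond leading, uniformly in the varying sequences $v_p,\,w_p$ --- this uniformity is exactly what \cref{cvsmoothdef} and the uniform-in-parameters estimates of \cite{Ioo18b} supply. Second, when $v$ lies on a proper face, $\Lambda_v$ is isotropic of dimension strictly less than $n-1$, so the transverse profile is not the Lagrangian Gaussian; you correctly flag this as the main obstacle, and it is precisely where the Lagrangian-only analysis of \cite{BPU98} fails and the isotropic case treated in \cite{Ioo18b} becomes necessary.
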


\cref{toricth} is proved in \cref{prooftoricth}.
Note that every irreducible representation of $SU(n)\subset U(n)$
can be obtained from
an irreducible representation of $U(n)$ by restriction,
and that the weight basis considered in \cref{toricth} induces by restriction
to $SU(n)\subset U(n)$ a weight basis for the maximal torus of $SU(n)$, so that
\cref{toricth} can also be stated in terms of irreducible
representations of $SU(n)$.
In the case $n=2$, the irreducible representation of $U(2)$ of highest weight
$(p,0)$ induces the \emph{spin-}$\frac{p}{2}$ representation
of $SU(2)\subset U(2)$, and the elements of a weight basis
for the $1$-dimensional torus action $T_0\subset U(n)$
coincide with the standard \emph{spin states} in quantum mechanics.
The matrix elements computed in \cref{toricth} then coincide
with \emph{Wigner's $d$-matrix elements}, as described for instance in
\cite[\S\,3.1]{LY09}. Furthermore, the associated coadjoint orbit
$(X_\lambda,\om)$ is naturally identified with the $2$-dimensional
sphere $S^2$ endowed with its standard volume form, and \cref{toricth}
then recovers
the semiclassical \emph{spherical area formula} for Wigner's $d$-matrix
elements,
as stated for instance in \cite[\S\,3.2,\,(57)]{LY09}.
Here, the spherical area refers to the area delimited by the intersection of two
circles on the sphere $S^2$, which is computed by the term
$\eta_{p}^{(q)}\in\R$
in formula \cref{<e1e2>}.

On the other hand, \cref{toricth} gives a criterion from representation theory
for the general problem of whether the intersection
$g\Lambda_{v}\cap\Lambda_{w}\subset\CP^{n-1}$ is non-empty, for any given
$v,\,w\in\Delta$.
Already in the case when $v$ is the \emph{barycenter} of $\Delta$,
so that $\Lambda_v\subset\CP^{n-1}$ is the so-called \emph{Clifford torus}
of $\CP^{n-1}$ as a toric manifold, the only known proof of the fact that
$g\Lambda_v\cap\Lambda_v\neq\0$ for all $g\in U(n)$ follows from the
Hamiltonian non-displaceability of Clifford tori due to Biran,
Entov and Polterovich in \cite{BEP04} and Cho in \cite{Cho04},
which is based on sophisticated tools of symplectic
topology.
We thus hope that \cref{mainth,toricth} can shed light on the non-displaceability of remarkable
Lagrangian submanifolds by automorphisms groups of Kähler manifolds.

In \cref{BTsec,groupsec}, we recall
the geometric picture behind \cref{mainth,toricth}, which
is that of \emph{Geometric Quantization},
following the initial insight of Guillemin and Sternberg
in \cite[\S\,1]{GS83}.
This picture is based on a result of Kostant,
which we recall in \cref{Lpreqprop}, stating that for any
$\lambda=(\lambda_1,\lambda_2,\cdots,\lambda_n)\in\Z^n$
with
$\lambda_1\geq\lambda_2\geq\cdots\geq\lambda_n$, the coadjoint
action of $U(n)$ on the associated coadjoint orbit $X:=X_\lambda$
lifts to a natural Hermitian line bundle
$(L,h^L)$ endowed with a Hermitian connection
$\nabla^L$
satisfying the following \emph{prequantization formula},
\begin{equation}\label{preq}
\om=\frac{\sqrt{-1}}{2\pi}R^L\,,
\end{equation}
where $R^L\in\Om^2(X,\C)$ is the curvature of $\nabla^L$.
The \emph{holomorphic
quantization} of $(X,\om)$ is then given by the space of its
holomorphic sections $H^0(X,L)$, for the holomorphic
structure on $L$ induced by a natural compatible
$U(n)$-invariant complex structure on
$(X,\om)$. By the celebrated
\emph{Borel-Weil theorem},
which we recall in \cref{BWth},
the induced action of $U(n)$ then makes $H^0(X,L)$
into an irreducible representation of $U(n)$ with highest weight $\lambda$.
On the other hand, 
a submanifold $\Lambda\subset X$ is said to satisfy the
\emph{Bohr-Sommerfeld condition} if there exists
a non-vanishing section $\zeta\in\cinf(\Lambda,L|_{\Lambda})$ satisfying
\begin{equation}
\nabla^L_{\xi}\zeta\equiv 0\,\text{ for all }\xi\in\cinf(\Lambda,T\Lambda)\,.
\end{equation}
These submanifolds represent quantum states in the old
\emph{Bohr-Sommerfeld quantization}
process, and we explain in \cref{Lagstate} how to associate
to such a manifold an element of $H^0(X,L)$, which we call the
associated \emph{isotropic state} and which we interpret
as the corresponding quantum state in holomorphic quantization.
We then explain in \cref{theonorme,theointergal} how the Hermitian
product of two isotropic states associated with two submanifolds
$\Lambda_1$ and $\Lambda_2$ satisfying the Bohr-Sommerfeld condition can be computed
in terms of the geometry of the intersection $\Lambda_1\cap\Lambda_2$
at the semiclassical limit $p\to+\infty$, where one replaces $L$
by its $p^{\text{th}}$ tensor power $L^p$. This is based on results of the author in \cite{Ioo18b}, following the seminal work
of Borthwick, Paul and Uribe in \cite[Th.\,3.2]{BPU98},
and uses tools of \emph{Berezin-Toeplitz quantization}
first developped by Boutet de Monvel and Sjöstrand
\cite{BdMS75}
and Boutet de Monvel and Guillemin \cite{BdMG81}, while the author
follows in \cite{Ioo18b} the approach due to Ma and Marinescu in
\cite{MM07,MM08b}. The modern approach to Berezin-Toeplitz quantization
is based on a classical result of Bordemann, Meinreken and Schlichenmaier
in \citep{BMS94}, which we recall in \cref{BMS},
establishing the quantum-classical correspondence between classical observables,
represented by smooth function $\cinf(X,\R)$ over $(X,\om)$,
and quantum observables, represented by Hermitian operators acting
on $H^0(X,L^p)$, at the semiclassical limit as $p\to+\infty$.

In \cref{quantprojsec}, we consider first
the projective case of $X=\CP^{n-1}$ treated in
\cref{toricth}, in which case the prequantizing line bundle $(L,h^{L})$
coincides with the
\emph{dual of the tautological line bundle} over $\CP^{n-1}$
endowed with its associated Fubini-Study metric. Then for any $p\in\N$, the
space of holomorphic sections $H^0(X,L^p)$ naturally identifies with
the space of homogeneous polynomials of degree $p\in\N$ over $\C^n$,
which is the classical
realization of the irreducible representation of $U(n)$ with highest weight
$(p,0,\cdots 0)\in\Z^n$ constructed by Weyl, as explained for instance in \cite[\S\,6.1]{FH91}.
%
On the other hand, from general considerations on toric manifolds,
the fibres of the moment map \cref{Mtoric} satisfying the Bohr-Sommerfeld condition
are exactly those above integral points 
inside $p\Delta\subset\R^{n-1}$, where $\Delta$ is the Delzant polytope
of $\CP^{n-1}$,
and we show in \cref{BStoric} that the associated istropic states
provide basis elements for the weight decomposition
of $H^0(X,L^p)$ for the torus action $T_0\subset U(n)$ with corresponding
weight. This correspondence holds
as such for all toric manifolds, and provides the most precise instance of
the expected correspondence between Bohr-Sommerfeld quantization and
holomorphic quantization.
The proof of \cref{toricth}, that we give in \cref{prooftoricth},
then follows as an
application of the results of the author in \cite{Ioo18b} described
above. Note that the previous result of Borthwick, Paul and Uribe in
\cite{BPU98} only holds for Bohr-Sommerfeld Lagrangian submanifolds,
which corresponds
to the case when $w\in\Delta^0\subset\Delta$ in \cref{toricth}, while the
case of $w\in\Delta$ belonging to a general face requires the more general
case of Bohr-Sommerfeld isotropic submanifolds considered in \cite{Ioo18b},
since in that case
the dimension of the fibre $\Lambda_v\subset\CP^{n-1}$ is strictly lower than $n-1$.
On the other hand, Borthwick, Paul and Uribe compute in \cite[Th.\,4.4]{BPU98}
a special case of their formula when $\dim_\C X=1$, from which 
one readily recovers the spherical area formula for Wigner's
$d$-matrix elements associated
with great circles of $S^2$, as explained in \cite[Th.\,16]{BN23}.

In \cref{GZquantsec}, we proceed to consider the case of a general
regular weight $\lambda=(\lambda_1,\lambda_2,\cdots,\lambda_n)\in\Z^n$ with
$\lambda_1>\lambda_2>\cdots>\lambda_n$, so that the associated coadjoint
orbit $X_\lambda\subset\u(n)^*$ has maximal dimension among coadjoint
orbits in $\u(n)^*$. In \cref{GZbassec}, we
introduce the tools of representation theory
of compact Lie groups needed for 
the proof of \cref{mainth}. In particular, the Gelfand-Zetlin decomposition
\cref{GZdecflaintro} is the decomposition into common eigenspaces of the
commutative
\emph{Gelfand-Zetlin subalgebra}
\begin{equation}\label{GZalgintro}
\AA_n:=\<Z[U(\u(k))]~|~1\leq k\leq n\>\subset U(\u(n))\,,
\end{equation}
which we introduce in \cref{GZalgdef} as the commutative subalgebra
of the \emph{universal
enveloping algebra} $U(\u(n))$ of $\u(n)$ generated by the centers
$Z[U(\u(k))]\subset U(\u(k))$ of the universal
enveloping algebra of $\u(k)$ for each $k\in\N$ with $k\leq n$
via the sequence of inclusions \cref{GLinc}.
We study
the common eigenvalues of the Gelfand-Zetlin subalgebra \cref{GZalgintro}
in the decomposition \cref{GZdecflaintro}, in order to link it with the parametrization
\cref{W=DZ} due to Guillemin and Sternberg in \cite[Prop.\,5.4]{GS83}. In particular,
the half-sum of roots inducing \cref{rhobar} plays an essential role
in the \emph{Harish-Chandra isomorphism} \cref{HCdef} used to determine these
eigenvalues. In \cref{proofmainthsec}, we then give the proof of
\cref{mainth}, by showing that a natural generating family for the
Gelfand-Zetlin subalgebra \cref{GZalgintro} can be expressed
in terms of the Berezin-Toeplitz quantization of the Gelfand-Zetlin system
\cref{GZsystintro} over $(X_\lambda,\om)$ introduced by Guillemin
and Sternberg in \cite{GS83}, then using results of
Charles in \cite{Cha03b,Cha06} on the semiclassical limit of quantum
integrable systems
in Berezin-Toeplitz quantization, to show that Gelfand-Zetlin bases elements
are Lagrangian states associated with regular Bohr-Sommerfed
fibres of the Gelfand-Zetlin system \cref{GZsystintro} in an appropriate sense.
Note that the integral part of the half-sum of roots \cref{rhobar}
corresponds here to the
\emph{metaplectic correction} appearing in \cite{Cha06}, which corresponds
to the choice of a square root of the \emph{canonical line bundle}
\cref{KX} of $X_\lambda$, and which is needed in order to provide the purely
symplectic formula \cref{mainfla} for the first order term of the asymptotics
of matrix elements established in the main \cref{mainth}.
Note on the other hand that the projective space
$\CP^{n-1}$ considered
in \cref{toricth} admits a metaplectic correction if and only if
$n\in\N^*$ is even, which explains the shift by $n/2$ needed to obtain
the purely symplectic formula \cref{toricfla} in the second main
\cref{toricth}.

Let us finally point out that one of our motivation for \cref{mainth}
comes from
the study of $SU(2)$-character varieties of surfaces, which
admit a remarquable classical integrable system due to Jeffrey and Weitsman
in \cite{JW92},
as well as a natural action of the mapping class group of the surface.
On the other hand, character varieties admit a natural quantization
endowed with a natural action of the mapping class group,
provided by the TQFT of Witten in \cite{Wit89}, Reshetikhin and Turaev
in \cite{RT91} and Blanchet, Habegger, Masbaum and Vogel in \cite{BHMV95}.
This quantization also comes with a natural basis, and the
matrix elements for the mapping class group action in this basis
have been computed by Detcherry in
\cite[Th.\,11.1]{Det18} using the results of Charles in \cite{Cha03b,Cha06},
obtaining the asymptotics \cref{mainfla} in this setting. Note also that 
the same
method was used by Charles in \cite[Th.\,7.1]{Cha10b} to compute
the asymptotics of the classical $6j$-symbols, which as explained in
\cite[\S\,4.1,\,(68)]{LY09} are
closely related to the asymptotics of Wigner's $d$-matrix elements.

\section{Geometric quantization}
\label{BTsec}

In this Section, we describe the general set-up of Geometric quantization
used in the proofs of \cref{mainth,toricth}, and in particular that of
Berezin-Toeplitz quantization, following \cite{Ioo18b} and \cite{MM07}.

\subsection{Berezin-Toeplitz quantization}
\label{BTsubsec}

Let $(X,\om)$ be a compact symplectic manifold of dimension $2n\in\N^*$
together with a Hermitian line bundle $(L,h^L)$ endowed with a
Hermitian
connection $\nabla^L$ satisfying the prequantization formula \cref{preq}.
Let also $X$ be equipped with a
complex structure $J\in\End(TX)$ compatible with $\om$, making $(X,\om,J)$
into a \emph{Kähler manifold},
and write $g^{TX}$ for the \emph{Kähler metric} of $(X,\om,J)$, which
is the Riemannian metric defined over $X$
by the formula
\begin{equation}\label{gTX}
g^{TX}(\cdot,\cdot)=\om(\cdot,J\cdot)\,.
\end{equation}
The Riemannian volume form $dv_X$ of $(X,g^{TX})$ then coincides with the
\emph{Liouville volume form} of $(X,\om)$, that is
\begin{equation}\label{voldef}
dv_X=\frac{\om^n}{n!}\,.
\end{equation}
Let us write
\begin{equation}\label{splitc}
T_\C X=T^{(1,0)}X\oplus T^{(0,1)}X
\end{equation}
for the splitting of the complexification $T_\C X$ of the tangent bundle
of $X$ into the eigenspaces of $J$ corresponding to the eigenvalues $\sqrt{-1}$ and $-\sqrt{-1}$ respectively. For any $v\in\cinf(X,TX)$, we write
$v=v^{(1,0)}+v^{(0,1)}$ for its decomposition into this splitting.
Under the canonical isomorphism of complex vector bundles
$(TX,J)\simeq T^{(1,0)}X$ induced by the splitting \cref{splitc},
the Hermitian metric on $T^{(1,0)}X$ induced by the restriction of $g^{TX}$
via \cref{splitc} corresponds to the Hermitian metric $h^{TX}$ on $(TX,J)$
defined by
\begin{equation}\label{HermTX}
h^{TX}=g^{TX}-\sqrt{-1}\om\,.
\end{equation}
The \emph{canonical line bundle} of $(X,J,\om)$ is the holomorphic line bundle
\begin{equation}\label{KX}
K_X=\det(T^{(1,0)}X^*)
\end{equation}
endowed with the Hermitian structure $h^{K_X}$ and
Hermitian connection $\nabla^{K_X}$ respectively induced by $g^{TX}$ and
$\nabla^{TX}$ via the splitting \cref{splitc}.
By the prequantization formula \cref{preq},
the curvature $R^L\in\Om^2(X,\C)$ of $\nabla^L$ is
$J$-invariant, which implies that
the $(0,1)$-part of $\nabla^L$ under the splitting \cref{splitc} defines a
Cauchy-Riemann $\dbar$-operator, inducing a holomorphic structure on $L$.

For any holomorphic Hermitian line bundle $(K,h^K)$ over $X$,
we write $|\cdot|_{h^K}$
for the pointwise norm on $K$ induced by $h^K$. For any $p\in\N$,
write $L^p:=L^{\otimes p}$ for the $p^{\text{th}}$ tensor power of $L$,
and $h^{p},\,\nabla^{p}$
for the Hermitian metric and connection on the tensor product
$L^p\otimes K$ respectively induced 
by $h^L,\,h^K$ and $\nabla^L,\,\nabla^K$.
We denote by $\cinf(X,L^p\otimes K)$ the space of smooth sections of
$L^p\otimes K$, endowed with the \emph{$L^2$-Hermitian product}
$\<\cdot,\cdot\>_{L^2}$ given for any $s_1,s_2\in\cinf(X,L^p\otimes K)$ by the formula
\begin{equation}\label{L2}
\<s_1,s_2\>_{L^2}=\int_X h^p(s_1(x),s_2(x))\,dv_X(x)\,.
\end{equation}
We write $\|\cdot\|_{L^2}$ for the associated $L^2$-norm, and
$L^2(X,L^p\otimes K)$ for the completion of $\cinf(X,L^p\otimes K)$
with respect to $\|\cdot\|_{L^2}$. We also write $\|\cdot\|_{op}$ for the
operator norm on bounded operators acting on $L^2(X,L^p\otimes K)$.

Following for instance \cite[Th.\,1.4.1]{MM07}, the subspace
$H^0(X,L^p\otimes K)\subset L^2(X,L^p\otimes K)$
of holomorphic sections of $L^p\otimes K$ is finite-dimensional, so that
the orthogonal projection
\begin{equation}\label{projdef}
P_p:L^2(X,L^p\otimes K)\longrightarrow H^0(X,L^p\otimes K)
\end{equation}
with respect to the $L^2$-product \cref{L2} has finite rank, and
hence admits a smooth Schwartz kernel.
In the following definition, we introduce a fundamental tool of this paper.

\begin{defi}
\label{BTdef}
For any $f\in\cinf(X,\R)$ and $p\in\N$, the associated
\emph{Berezin-Toeplitz operator} is the operator $T_p(f)$ acting on
$L^2(X,L^p\otimes K)$ defined by
\begin{equation}
T_p(f):=P_p\,f\,P_p\,,
\end{equation}
where $f$ denotes the operator of multiplication by $f$.
\end{defi}

The main interest of Berezin-Toeplitz operators is that they provide
\emph{quantum observables} for the holomorphic quantization
of the symplectic manifold $(X,\om)$. This is described
by the following fundamental theorem due to Bordemann, Meinrenken and Schlichenmaier
\cite{BMS94},
which we present here in a guise due to Ma and Marinescu in \cite{MM08b}.

\begin{theorem}\label{BMS}
{\cite{BMS94,MM08b}}
For any $f\in\cinf(X,L^p\otimes L)$, we have
\begin{equation}\label{BMS0}
\|T_{p}(f)\|_{op}\xrightarrow{p\to+\infty}|f|_{\CC^0}\,,
\end{equation}
where $|f|_{\CC^0}$ denotes the uniform norm of $f$.

Furthermore, for any $f,\,g\in\cinf(X,L^p\otimes K)$, we have the following
estimates in operator norm as $p\to+\infty$,
\begin{equation}\label{BMS1}
T_p(f)T_p(g)=T_p(fg)+O(p^{-1})\,,
\end{equation}
and
\begin{equation}\label{BMS2}
[T_{p}(f),T_{p}(g)]=
\frac{\sqrt{-1}}{2\pi p}T_{p}(\{f,g\})+O(p^{-2})\,,
\end{equation}
where $\{\cdot,\cdot\}$ denotes the Poisson bracket on $\cinf(X,\R)$
induced by $\om$.
\end{theorem}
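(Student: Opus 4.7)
The plan is to base the proof on fine asymptotic properties of the Bergman kernel $P_p(x,y)$, meaning the smooth Schwartz kernel of the orthogonal projector $P_p$ from \cref{projdef}. Two facts about this kernel would do the work. First, an \emph{off-diagonal decay estimate}: for every $\epsilon>0$ and every $k\in\N$, there exists $C_k>0$ such that $|P_p(x,y)|_{h^p}\leq C_k p^{-k}$ whenever $d(x,y)\geq\epsilon$, with $d$ the Riemannian distance of $g^{TX}$. Second, a \emph{near-diagonal expansion}: in Kähler normal coordinates around any $x_0\in X$ rescaled by a factor $1/\sqrt{p}$, $P_p$ admits an asymptotic expansion in powers of $p^{-1/2}$ whose leading term is the Bergman kernel of the model flat Gaussian on $(T_{x_0}X,\omega_{x_0})$. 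Both ingredients are established in \cite{MM07,MM08b} via the rescaling method applied to the Kodaira Laplacian on $L^p\otimes K$, and this is the route I would follow.

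For \cref{BMS0}, the upper bound $\|T_p(f)\|_{op}\leq|f|_{\CC^0}$ is immediate from the estimate $\|P_p f P_p s\|_{L^2}\leq|f|_{\CC^0}\|s\|_{L^2}$. For the reverse inequality, I would pick $x_0\in X$ where $|f|$ is maximal and introduce the normalized coherent state $e_{x_0,p}:=P_p(\cdot,x_0)/\|P_p(\cdot,x_0)\|_{L^2}$. Using the near-diagonal expansion, one checks $\langle T_p(f)e_{x_0,p},e_{x_0,p}\rangle=f(x_0)+O(p^{-1})$, yielding $\|T_p(f)\|_{op}\geq|f|_{\CC^0}-O(p^{-1})$.

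For \cref{BMS1}, I would write the composition kernel
\begin{equation*}
(T_p(f)T_p(g))(x,z)=\int_X\int_X P_p(x,y_1)f(y_1)P_p(y_1,y_2)g(y_2)P_p(y_2,z)\,dv_X(y_1)\,dv_X(y_2),
\end{equation*}
apply the off-diagonal decay to localize the integrand to a neighborhood of $y_1=y_2=x=z$ (up to $O(p^{-\infty})$), then perform the rescaling $y_i=x+u_i/\sqrt{p}$ and expand $f,g$ in Taylor series around $x$. A Laplace-type analysis using the explicit Gaussian leading term shows $(T_p(f)T_p(g))(x,z)=f(x)g(x)P_p(x,z)+O(p^{-1})$ pointwise, with uniform control translating into $T_p(f)T_p(g)=T_p(fg)+O(p^{-1})$ in operator norm. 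For \cref{BMS2}, the commutator vanishes to leading order by \cref{BMS1}, so one must push the near-diagonal expansion one order further and track the antisymmetric contribution. Writing the next-order correction as a bidifferential operator $C_1(f,g)$ applied to $fg$, one shows from the Kähler structure of the model kernel that $C_1(f,g)-C_1(g,f)$ is proportional to the Poisson bracket $\{f,g\}$, the constant $\tfrac{\sqrt{-1}}{2\pi}$ being fixed by evaluating on a model example such as linear observables on $(\C^n,\omega_0)$ or degree-one functions on $(\CP^1,\omega_{FS})$.

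The main obstacle is the rigorous derivation of the near-diagonal expansion of $P_p$ to the required subleading order, as well as the extraction of the exact antisymmetric coefficient in the commutator. I would handle this by quoting the rescaling method of Ma--Marinescu \cite{MM07}: rescale the Kodaira Laplacian $\Box_p$ on $L^p\otimes K$ by $p^{-1}$ in coordinates $u=\sqrt{p}\,y$ around $x_0$, identify the limit operator with a harmonic oscillator on $T_{x_0}X$ whose Bergman projector is known explicitly, and then use functional-analytic perturbation arguments to obtain the expansion in powers of $p^{-1/2}$ with smooth dependence on $x_0$. Once this is in hand, \cref{BMS1,BMS2} follow from direct expansion of the double integral above, matching the subleading term against $\{f,g\}$ by symmetry.
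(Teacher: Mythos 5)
Your outline follows exactly the route the paper indicates: this is a cited result (\cite{BMS94,MM08b}), proved in Ma--Marinescu via the off-diagonal decay and near-diagonal rescaled expansion of the Bergman kernel $P_p(x,y)$, which is precisely the strategy you describe (coherent states for \cref{BMS0}, Laplace-type expansion of the composition kernel for \cref{BMS1,BMS2}). Your sketch is correct and consistent with the paper's treatment, which does not reprove the theorem but defers to these references.
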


The approach of Ma and Marinescu in \cite{MM08b} to establish \cref{BMS} is based
on the off-diagonal asymptotic expansion as $p\to+\infty$ of the
smooth Schwarz kernel of the orthogonal projection \cref{projdef},
which we introduce in the following definition.

\begin{defi}\label{Bergdef}
For any $p\in\N$, the \emph{Bergman kernel}
$P_p(x,y)\in(L^p\otimes K)_x\,\otimes\,(L^p\otimes K)_y^*$, for all $x,\,y\in X$,
is the Schwarz kernel of the orthogonal projection \cref{projdef},
characterized for any
$s\in\cinf(X,L^p\otimes K)$ and $x\in X$ by
the formula
\begin{equation}\label{ker}
(P_p s)(x)=\int_X P_p(x,y).s(y)\,dv_X(y).
\end{equation}
\end{defi}

\subsection{Isotropic states}
\label{isosec}

In this Section, we describe the semiclassical asymptoptics
of \emph{isotropic states} in Berezin-Toeplitz quantization, which are
at the basis of the asymptotics described in \cref{mainth,toricth}.

Recall that a properly embedded submanifold $\iota:\Lambda\hookrightarrow X$
in a symplectic manifold
$(X,\om)$ is said to be \emph{isotropic} if $\iota^*\om=0$.
If in addition $\dim\Lambda=n$, it is said to be \emph{Lagrangian}.
We write
$dv_\Lambda$ for
the Riemannian volume form of $(\Lambda,\iota^*g^{TX})$.

Let $\nabla^{\iota^*L},\,h^{\iota^*L}$ be the connection and Hermitian metric induced by $\nabla^L,\,h^L$ on the pullback line bundle $\iota^*L$ over $\Lambda$. Note that by \cref{preq}, the condition $\iota^*\om=0$ implies that $\nabla^{\iota^*L}$ is \emph{flat}. This observation motivates the following definition.

\begin{defi}\label{BS}
For any $p\in\N$,
a properly embedded oriented submanifold
$\iota:\Lambda\hookrightarrow X$ is said to
satisfy the \emph{Bohr-Sommerfeld condition} at level $p\in\N$ if there exists a non-vanishing
smooth section $\zeta^p\in\cinf(\Lambda,\iota^* L^p)$ satisfying
\begin{equation}\label{nabs=0}
\nabla^{\iota^*L^p} \zeta^p=0\,.
\end{equation}
A \emph{sequence of Bohr-Sommerfeld submanifolds} is a sequence
$\{(\Lambda_p,\zeta^p,f_p)\}_{p\in\N}$ of
submanifolds $\iota_p:\Lambda_p\hookrightarrow X$ satisfying
the Bohr-Sommerfeld condition at level $p\in\N$,
of sections $\zeta^p\in\cinf(\Lambda_p,\iota^*_p L^p)$
with $|\zeta^p|_{\iota^*_p L^p}\equiv 1$
satisfying \cref{nabs=0}, and of sections
$f_p\in\cinf(\Lambda_p,\iota^*_p K)$,
for all $p\in\N$.
%
%
\end{defi}

In the case $K=\C$ and $f_p=1$ for all $p\in\N$,
we will simply write $\{(\Lambda_p,\zeta^p)\}_{p\in\N}$ for a sequence
of Bohr-Sommerfeld submanifolds. When $\dim\Lambda_p=n$ for all $p\in\N$,
we talk about a sequence of \emph{Bohr-Sommerfled Lagrangian submanifolds}.

As explained in the Introduction,
a Bohr-Sommerfeld submanifold represents a quantum state
in the old Bohr-Sommerfeld quantization scheme, and there should be a
corresponding state
in the holomorphic quantization scheme described in this Section.
Following \cite[(12)]{BPU98} and \cite[Def.\,3.3]{Ioo18b}, this is provided by
the following definition.

\begin{defi}\label{Lagstate} The \emph{isotropic state} associated with 
a sequence of Bohr-Sommerfeld submanifolds $\{(\Lambda_p,\zeta^p,f_p)\}_{p\in\N}$
is the sequence of sections $\{s_{\Lambda_p}\in H^0(X,L^p\otimes K)\}_{p\in\N}$
defined for any $x\in X$ by the formula
\begin{equation}\label{defLagstate}
s_{\Lambda_p}(x)=\int_{\Lambda_p} P_p(x,y).\zeta^pf_p(y)\,dv_{\Lambda_p}(y).
\end{equation}
\end{defi}

An isotropic state associated with a sequence of Bohr-Sommerfeld Lagrangian
submanifolds is called a \emph{Lagrangian state}.
Istropic states are characterized by the following reproducing property,
which readily follows from their \cref{Lagstate}.

\begin{prop}\label{proprepgal}
{\cite[Prop.\,3.4]{Ioo18b}}
For any $p\in\N$ and $s\in H^0(X,L^p\otimes K)$, we have
\begin{equation}\label{rep}
\<s,s_{\Lambda_p}\>_{L^2}=\int_{\Lambda_p}
h^p(s(x),\zeta^pf_p(x))\,dv_{\Lambda_p}(x).
\end{equation}
\end{prop}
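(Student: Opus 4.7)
The argument is a direct computation combining Fubini's theorem with the reproducing property of the Bergman kernel, and no real obstacle is expected. First, I would substitute the definition \cref{defLagstate} of $s_{\Lambda_p}$ into the $L^2$-product \cref{L2} to obtain
\begin{equation*}
\<s,s_{\Lambda_p}\>_{L^2}
=\int_X h^p\!\left(s(x),\int_{\Lambda_p}P_p(x,y).\zeta^p f_p(y)\,dv_{\Lambda_p}(y)\right)dv_X(x).
\end{equation*}
Since the integrand is smooth and both $X$ and $\Lambda_p$ are compact, I would pull the $\Lambda_p$-integral out of the second slot of $h^p$ by pointwise $\R$-linearity and exchange the order of integration by Fubini's theorem, obtaining
\begin{equation*}
\<s,s_{\Lambda_p}\>_{L^2}
=\int_{\Lambda_p}\left(\int_X h^p(s(x),P_p(x,y).\zeta^p f_p(y))\,dv_X(x)\right)dv_{\Lambda_p}(y).
\end{equation*}

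Next I would recognize the inner integral as a matrix coefficient of $P_p$. For any fixed $y\in X$ and $\tau\in(L^p\otimes K)_y$, the section $x\mapsto P_p(x,y).\tau$ lies in $H^0(X,L^p\otimes K)$ because $P_p$ projects onto this subspace, and the characterization \cref{ker} of the Bergman kernel combined with the self-adjointness of $P_p$ with respect to $\<\cdot,\cdot\>_{L^2}$ yields the reproducing identity
\begin{equation*}
\int_X h^p(s(x),P_p(x,y).\tau)\,dv_X(x)=h^p((P_p s)(y),\tau).
\end{equation*}
Applying this with $\tau=\zeta^p f_p(y)$ and using that $P_p s=s$, since $s\in H^0(X,L^p\otimes K)$, the inner integral collapses to $h^p(s(y),\zeta^p f_p(y))$. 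Substituting back into the previous display and relabeling the dummy integration variable then gives \cref{rep}.

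The whole argument is essentially bookkeeping; the only step that merits a second glance is the reproducing identity above, which is standard and can be verified by expanding the kernel $P_p(x,y)$ in any orthonormal basis of the finite-dimensional space $H^0(X,L^p\otimes K)$ from \cref{projdef} and using the definition of the induced Hermitian pairing on $(L^p\otimes K)_y^*$.
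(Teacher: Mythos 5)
Your proposal is correct and follows essentially the same route as the paper: substitute \cref{defLagstate} into \cref{L2}, exchange the order of integration by Fubini, and then use the self-adjointness of $P_p$ together with $P_ps=s$ to collapse the inner integral to $h^p(s(y),\zeta^pf_p(y))$. The paper phrases the key step as moving the kernel to the first slot pointwise via self-adjointness and then invoking the reproducing formula, but this is the same computation as your matrix-coefficient identity.
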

\begin{proof}
By \cref{Bergdef} of the Bergman kernel, the fact that the
orthogonal projection
\cref{projdef} is self-adjoint translates into the following formula,
for any $x,\,y\in X$ and
$\eta\in\cinf(X,L^p\otimes K)$,
\begin{equation}
h^p\left(\eta(x),P_p(x,y).\eta(y)\right)
=h^p\left(P_p(y,x).\eta(x),\eta(y)\right)\,.
\end{equation}
Furthermore, for any $s\in H^0(X,L^p\otimes K)$, the fact that the orthogonal
projection \cref{projdef} acts
as the identity on $H^0(X,L^p)$ translates into the following reproducing
formula, for all $x\in X$,
\begin{equation}
s(x)=\int_X\,P_p(x,y).s(y)\,dv_X(y)\,.
\end{equation}
Then using \cref{Lagstate} and Fubini's theorem, we compute
\begin{equation}\label{comprep}
\begin{split}
\<s,s_{\Lambda_p}\>_{L^2} & =\int_X\int_{\Lambda_p}\,h^p\left(s(y), P_p(y,x).\zeta^pf_p(x)\right)\,dv_{\Lambda_p}(x)\,dv_X(y)\\
& =\int_{\Lambda_p}\int_X\,h^p\left(P_p(x,y).s(y),\zeta^pf_p(x)\right)\,dv_X(y)\,dv_{\Lambda_p}(x)\\
& =\int_{\Lambda_p} h^p(s(x),\zeta^pf_p(x))\ dv_{\Lambda_p}(x).
\end{split}
\end{equation}
This shows the result.
\end{proof}

The following Definition is purely technical, and will be used to deal with
the varying sequences of Bohr-Sommerfeld submanifolds appearing in the
main \cref{mainth,toricth}.

\begin{defi}\label{cvsmoothdef}
We say that a sequence $\{\iota_p:\Lambda_p\hookrightarrow X\}_{p\in\N}$ of
embedded submanifolds of $X$
\emph{converges smoothly} towards a properly embedded submanifold
$\iota:\Lambda\hookrightarrow X$ if there exists
$p_0\in\N$ and a finite collection of charts
$\{U_j\subset X\}_{j=1}^m$ covering both $\Lambda$ and $\Lambda_{p}$ for all $p\geq p_0$, together with
diffeomorphisms $\phi_j:U_j\longrightarrow V\subset\R^{2n}$ 
with $\phi_j(U_j\cap\Lambda)=V\cap\R^{\dim\Lambda}$
and diffeomorphisms $\phi_j^{(p)}:U_j\longrightarrow V\subset\R^{2n}$
with $\phi_j^{(p)}(U_j\cap\Lambda_{p})=V\cap\R^{\dim\Lambda}$, for all
$1\leq j\leq m$ and $p\geq p_0$, such that $\phi_j^{(p)}\circ\phi_j^{-1}:V\to V$ converges smoothly towards the
identity as $p\to+\infty$.

We say that a sequence $\{(\Lambda_p,\zeta^p,f_p)\}_{p\in\N}$
of Bohr-Sommerfeld submanifolds
\emph{converges smoothly} towards the couple $(\Lambda,f)$,
where $f\in\cinf(\Lambda,\iota^*K)$ is a section over $\Lambda$,
if furthermore,
there exist $f_j\in\cinf(U_j,K)$ for all $1\leq j\leq m$
such that $f|_{\Lambda\cap U_j}=\iota^*f_j$ and
$f_p|_{\Lambda\cap U_j}=\iota^*_pf_j$ for all $p\geq p_0$.
\end{defi}

Note that if a sequence of submanifolds $\{\Lambda_p\subset X\}_{p\in\N}$
converges smoothly towards a compact submanifold $\Lambda\subset X$, then
$\Lambda_p$ is diffeomorphic to $\Lambda$ for all $p\in\N$ big enough.
We will also need the following standard notion.

\begin{defi}\label{cleandef}
We say that two submanifolds $\Lambda^{(1)},\,\Lambda^{(2)}\subset X$
are intersecting \emph{cleanly} if the intersection
$\Lambda^{(1)}\cap\Lambda^{(2)}$ is a submanifold of $X$
such that for any $x\in\Lambda^{(1)}\cap\Lambda^{(2)}$, we have
$T_x \Lambda^{(1)}\cap T_x\Lambda^{(2)}
=T_x(\Lambda^{(1)}\cap\Lambda^{(2)})$. 
\end{defi}

Note that if two respective sequences of submanifolds
$\{\Lambda^{(1)}_p,\,\Lambda^{(2)}_p\subset X\}_{p\in\N}$
converge
smoothly towards
$\Lambda^{(1)},\,\Lambda^{(2)}\subset X$ intersecting cleanly,
then the sequence
$\{\Lambda^{(1)}_p\cap\Lambda^{(2)}_p\}_{p\in\N}$
converges smoothly towards $\Lambda^{(1)}\cap\Lambda^{(2)}$.




The following result, adapted from \cite{Ioo18b},
gives an asymptotic estimate on the norm of
an isotropic state in terms of the volume of the
associated isotropic submanifold as $p\fl+\infty$. 

\begin{theorem}\label{theonorme}
{ \cite[Th.\,3.6]{Ioo18b}}
Let $\iota:\Lambda\hookrightarrow X$ be an isotropic submanifold
endowed with $f\in\cinf(\Lambda,\iota^*K)$, and let
$\{(\Lambda_p,\zeta^p,f_p)\}_{p\in\N}$ be a sequence of Bohr-Sommerfeld submanifolds
converging smoothly towards $(\Lambda,f)$.
Then
the $L^2$-norm of the associated isotropic state
$\{s_{\Lambda_p}\in H^0(X,L^p\otimes K)\}_{p\in\N}$
satisfy the following asymptotic
expansion as $p\fl+\infty$,
\begin{equation}\label{norme}
\big\|s_{\Lambda_p}\big\|^2_{L^2}=2^{\frac{\dim\Lambda}{2}}p^{n-\frac{\dim\Lambda}{2}}\left(
\int_{\Lambda}|f_p|_{K}^2\,dv_{\Lambda_p}
+O(p^{-1})\right)\,.
\end{equation}
\end{theorem}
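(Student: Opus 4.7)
The plan is to combine the reproducing formula of \cref{proprepgal} with the near-diagonal asymptotic expansion of the Bergman kernel. Applying \cref{proprepgal} with $s=s_{\Lambda_p}$ and then substituting \cref{defLagstate} expresses the squared norm as a double integral
\begin{equation*}
\|s_{\Lambda_p}\|^2_{L^2}=\iint_{\Lambda_p\times\Lambda_p}h^p\big(P_p(x,y)\cdot\zeta^pf_p(y),\,\zeta^pf_p(x)\big)\,dv_{\Lambda_p}(y)\,dv_{\Lambda_p}(x).
\end{equation*}
The standard off-diagonal decay $|P_p(x,y)|\leq Cp^n e^{-c\sqrt{p}\,d(x,y)}$ from \cite{MM07} reduces this to an integral over a $p^{-1/4}$-neighborhood of the diagonal in $\Lambda_p\times\Lambda_p$, up to an $O(p^{-\infty})$ remainder.

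On this neighborhood I would invoke the Ma--Marinescu normal-form expansion of the Bergman kernel: in geodesic normal coordinates around a point $x\in X$ with fixed parallel-transport trivializations of $L$ and $K$, the rescaled kernel $P_p(x,\exp_x(u/\sqrt{p}))$ admits an expansion of the form $p^n\,\mathcal{P}(u)(1+O(p^{-1/2}))$, where $\mathcal{P}$ is an explicit Gaussian profile on $T_xX$ determined by $\om$ and $g^{TX}$. Using \cref{cvsmoothdef}, the diffeomorphisms $\phi_j^{(p)}\circ\phi_j^{-1}$ converge smoothly to the identity, which allows one to replace the varying submanifolds $\Lambda_p$ by $\Lambda$ in all derivatives uniformly in $p$, so that the normal-form expansion can be applied pointwise along $\Lambda$.

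The Bohr-Sommerfeld hypothesis enters at the key step: in the parallel trivialization of $L^p$ from $x$, the condition $\iota_p^*\om=0$ combined with $\nabla^{\iota_p^*L^p}\zeta^p=0$ and $|\zeta^p|_{\iota_p^*L}\equiv 1$ forces $\zeta^p$ to be constant to all orders along $\Lambda_p$ at $x$. Hence the Hermitian pairing in the integrand reduces to $(f_p(x),f_p(x))_K$ times the scalar Bergman profile, with no oscillating phase contribution along $\Lambda_p$. Setting $y=\exp_x(u_T/\sqrt{p})$ with $u_T\in T_x\Lambda_p$, the Jacobian absorbs a factor $p^{-\dim\Lambda/2}$, and the Gaussian integral of $\mathcal{P}(u_T,0)$ over $T_x\Lambda_p$ yields the explicit constant $2^{\dim\Lambda/2}$, coming from the variance mismatch between the Gaussian normalization of $\mathcal{P}$ and the Riemannian volume $dv_\Lambda$ along isotropic directions.

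Collecting the factors gives
\begin{equation*}
\|s_{\Lambda_p}\|^2_{L^2}=2^{\dim\Lambda/2}\,p^{n-\dim\Lambda/2}\left(\int_{\Lambda_p}|f_p|_K^2\,dv_{\Lambda_p}+O(p^{-1})\right),
\end{equation*}
as claimed. The main obstacle is the uniform control of the near-diagonal expansion along the \emph{varying} submanifolds $\Lambda_p$: one needs both the remainders in the Ma--Marinescu expansion of $P_p$ and the perturbation coming from the smooth convergence $\Lambda_p\to\Lambda$ in \cref{cvsmoothdef} to contribute at order $p^{-1}$ after rescaling. This is exactly the strategy carried out by the author in \cite{Ioo18b}, and the quoted estimate \cite[Th.\,3.6]{Ioo18b} is recovered in the present form by the above argument.
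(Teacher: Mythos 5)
Your proposal is correct and follows essentially the same route as the paper: the paper's proof of \cref{theonorme} consists solely of citing \cite[Th.\,3.6]{Ioo18b} and observing that its estimates are uniform under the smooth convergence of \cref{cvsmoothdef}, and your sketch reconstructs precisely the Bergman-kernel argument of that reference (reproducing formula, off-diagonal decay, near-diagonal Gaussian model, Bohr--Sommerfeld flatness killing the phase, Gaussian integral producing $2^{\dim\Lambda/2}p^{-\dim\Lambda/2}$), including the key uniformity point for the varying $\Lambda_p$. The only imprecision is the claim that $\zeta^p$ is ``constant to all orders'' in the parallel trivialization from $x$: the holonomy between the geodesic chord and the path in the isotropic $\Lambda_p$ is only $O(|u|^3)$, giving an $O(p^{-1/2})$ phase after rescaling, and one needs the parity of this term against the Gaussian to recover the stated $O(p^{-1})$ remainder.
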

\begin{proof}
This is a straightforward consequence of the proof of
\cite[Th.\,3.6]{Ioo18b}, where all estimates are uniform with respect
to deformation of parameters, so that it
readily extends to cover the case of sequences of smoothly converging
submanifolds in the sense of \cref{cvsmoothdef} instead of a fixed
Bohr-Sommerfeld submanifold for all $p\in\N$.
\end{proof}

The following result, also adapted from \cite{Ioo18b},
gives an asymptotic expansion as $p\fl+\infty$
on the Hermitian product of two isotropic states in terms of the intersection
of the associated isotropic submanifolds.

\begin{theorem}\label{theointergal}
{\cite[Th.\,4.3]{Ioo18b}}
Let $\iota_j:\Lambda^{(j)}\hookrightarrow X$ be isotropic submanifolds
endowed with $f^{(j)}\in\cinf(\Lambda^{(j)},\iota_j^*K)$,
let
$\{(\Lambda_{p}^{(j)},\zeta_j^p,f_{p}^{(j)})\}_{p\in\N}$
be sequences of Bohr-Sommerfeld submanifolds
converging
smoothly towards $(\Lambda^{(j)},f^{(j)})$,
and write
$\{s_{\Lambda_p^{(j)}}\in H^0(X,L^p\otimes K)\}_{p\in\N}$ for the associated
associated isotropic states, for each $j=1,\,2$.

If $\Lambda_1\cap\Lambda_2=\0$, then for any $k\in\N$,
there exists $C_k>0$ such that for all $p\in\N$, we have
\begin{equation}\label{theointergalfla}
\big|\big\langle s_{\Lambda_p^{(1)}},s_{\Lambda_p^{(2)}}\big\rangle_{L^2}\big|
\leq C_k\,p^{-k}\,.
\end{equation}

If $\Lambda_1$ and $\Lambda_2$ intersect cleanly,
writing $\Lambda^{(1)}\cap\Lambda^{(2)}=\cup_{q=0}^m Y^{(q)}$
for the decompositions into connected components,
there exist $b_{p}^{(q)}\in\C$ for each $0\leq q \leq m$ and $p\in\N$,
such that for any $k\in\N$ and as $p\fl+\infty$,
\begin{equation}\label{<u1u2>}
\big\langle s_{\Lambda_p^{(1)}},s_{\Lambda_p^{(2)}}\big\rangle_{L^2}=
 p^{n-\big(\frac{\dim\Lambda^{(1)}}{2}+\frac{\dim\Lambda^{(2)}}{2}\big)}
\sum_{q=1}^m p^{\frac{\dim Y^{(q)}}{2}}
\lambda^{(q)}_p
\left(b_{p}^{(q)}+O(p^{-1})\right)\,,
\end{equation}
where
%
%
$\lambda^{(q)}_p\in\C$ is the value of the constant function on $Y_{p}^{(q)}$
defined for any $x\in Y^{(q)}_p$ by
$\lambda^{(q)}_p(x)=h^{L^p}(\zeta_1^p(x),\zeta_2^p(x))$,
with
$\Lambda_{p}^{(1)}\cap\Lambda_{p}^{(2)}=\cup_{q=0}^m Y_{p}^{(q)}$ a
decomposition into connected components for all $p\in\N$
such that
$\{Y_{p}^{(q)}\}_{p\in\N}$ converges smoothly towards $Y^{(q)}$ for each
$1\leq q\leq m$.

Furthermore, if $\dim\Lambda_1=n$, we have
\begin{multline}\label{bm0}
b_{p}^{(q)}=2^{n/2}\int_{Y^{(q)}_p} h^K\left(f_p^{(1)},f_p^{(2)}\right)\\
\det{}^{-\frac{1}{2}} \left(\sum_{r=1}^{n-\dim Y^{(q)}}\sqrt{-1}\,h^{TX}(e^{(2)}_i,e_r^{(1)})\,\om(e_k^{(2)},e_r^{(1)})\right)_{i,k}|dv|_{Y^{(q)}},
\end{multline}
for some choice of square root for the determinant,
where $\{e_k^{(j)}\}_k$
are local orthonormal frames of the normal bundle of $Y^{(q)}_p$
inside $\Lambda_{p}^{(j)}$ for each $j=1,\,2$, and
$|dv|_{Y^{(q)}_p}$ is the Riemannian density 
associated with the restriction of $g^{TX}$ to $Y^{(q)}_p$.
\end{theorem}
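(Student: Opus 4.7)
The plan is to reduce to the proof of \cite[Th.\,4.3]{Ioo18b} by a uniformity argument on smoothly converging families. First, using the reproducing formula of \cref{proprepgal} applied twice, one expresses
\begin{equation*}
\big\langle s_{\Lambda_p^{(1)}},s_{\Lambda_p^{(2)}}\big\rangle_{L^2}
=\int_{\Lambda_p^{(1)}}\int_{\Lambda_p^{(2)}}
h^p\!\left(P_p(x,y).\zeta_2^pf_p^{(2)}(y),\zeta_1^pf_p^{(1)}(x)\right)
dv_{\Lambda_p^{(2)}}(y)\,dv_{\Lambda_p^{(1)}}(x)\,,
\end{equation*}
so that the whole analysis is about the behavior of the Bergman kernel $P_p(x,y)$ of \cref{Bergdef} restricted to $\Lambda_p^{(1)}\times\Lambda_p^{(2)}$ as $p\to+\infty$.

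For the disjoint case, smooth convergence of $\{\Lambda_p^{(j)}\}_{p\in\N}$ to compact $\Lambda^{(j)}$ with $\Lambda^{(1)}\cap\Lambda^{(2)}=\0$ provides some $\delta>0$ and $p_0\in\N$ such that $d(\Lambda_p^{(1)},\Lambda_p^{(2)})\geq\delta$ for all $p\geq p_0$. The off-diagonal exponential decay estimates for $P_p(x,y)$ (see \cite[Th.\,4.2.1]{MM07}) then give $|P_p(x,y)|_{h^p}\leq C_k p^{-k}$ uniformly over the product, and integrating yields \cref{theointergalfla}.

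For the clean intersection case, the same off-diagonal decay localizes the double integral to an arbitrarily small tubular neighborhood of $\Lambda_p^{(1)}\cap\Lambda_p^{(2)}=\cup_q Y_p^{(q)}$, up to an $O(p^{-\infty})$ error. On such a neighborhood of each $Y_p^{(q)}$, one introduces adapted Fermi-type normal coordinates along $Y_p^{(q)}$ inside $\Lambda_p^{(1)}$ and $\Lambda_p^{(2)}$, and substitutes the near-diagonal asymptotic expansion of $P_p$ à la Ma-Marinescu \cite{MM07}, whose leading phase is governed by $h^{TX}$ and $\om$ via \cref{HermTX}. The Bohr-Sommerfeld condition \cref{nabs=0} implies that the phase of $h^p(\zeta_1^p,\zeta_2^p)$ is constant along each connected intersection component $Y_p^{(q)}$, so that the phase of the integrand is critical precisely along $Y_p^{(q)}$. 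Applying Laplace's method transverse to $Y_p^{(q)}$ — using that the transverse intersection is clean precisely to ensure the transverse Hessian is non-degenerate — produces the volume factor $p^{(\dim Y^{(q)})/2}$, the constant phase $\lambda_p^{(q)}$, and in the Lagrangian case the explicit determinant \cref{bm0} coming from the transverse Gaussian integral. Summing over $q$ yields the expansion \cref{<u1u2>}.

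The main obstacle is to keep all these estimates uniform as the submanifolds and sections vary with $p$. By \cref{cvsmoothdef} the charts $\phi_j^{(p)}\circ\phi_j^{-1}$ converge smoothly to the identity, so in each chart the pulled-back data (volume forms $dv_{\Lambda_p^{(j)}}$, sections $\zeta_j^p,f_p^{(j)}$, and local equations of $Y_p^{(q)}$) converge in $\cinf$ to those attached to the limit. The off-diagonal decay of $P_p$ and its near-diagonal expansion from \cite{MM07} are themselves stable under $\cinf$-small perturbations with explicit uniform constants, and the stationary phase expansion depends only on finitely many derivatives of the phase and amplitude; hence every step of the proof of \cite[Th.\,4.3]{Ioo18b} transposes verbatim to the present smoothly converging setting, and the claimed asymptotics follow.
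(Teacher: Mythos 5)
Your proposal follows essentially the same route as the paper, which simply observes that all estimates in the proof of \cite[Th.\,4.3]{Ioo18b} are uniform under smooth deformations of the Bohr-Sommerfeld data, so that the argument carries over to smoothly converging sequences in the sense of \cref{cvsmoothdef}; your sketch of the underlying mechanism (reproducing formula, off-diagonal decay of the Bergman kernel, near-diagonal expansion plus stationary phase along the clean intersection) is an accurate account of that proof. The only point you miss is that the paper also corrects a sign in \cite[Th.\,4.2,\,(4.4)]{Ioo18b}, so the determinant appearing in \cref{bm0} is the conjugate of the one you would obtain by transposing the cited proof verbatim.
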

\begin{proof}
This is a straightforward consequence of the proof of
\cite[Th.\,4.3]{Ioo18b}, where all estimates are uniform with respect
to deformations of parameters, so that it
readily extends to cover the case of sequences of converging submanifolds
in the sense of \cref{cvsmoothdef} instead of a fixed Bohr-Sommerfeld manifold
for all $p\in\N$. 
Note however that the determinant term in the integrand
of formula \cref{bm0} is actually the conjugate of the corresponding term
in \cite[Th.\,4.2,\,(4.4)]{Ioo18b}, which is due to the wrong sign appearing
in front of the $\sqrt{-1}$-term in the second equality of
\cite[(4.17)]{Ioo18b}.
Then \cite[Th.\,6.3,\,(6.9)]{Ioo18b} should also be corrected accordingly with
a minus sign in front of $\sqrt{-1}$ appearing inside
the exponential term.
\end{proof}

Let us now consider the important special case when one takes $K=K_X^{1/2}$
to be a square root of the canonical line bundle \cref{KX} of $X$.
As explained in \cite[\S\,D]{LM89},
such a square root exists only if the first Chern class $c_1(TX)$ of
is even in $H^2(X,\Z)$, and is not unique in general.
We endow $K_X^{1/2}$ with the Hermitian structure induced by the Hermitian
metric \cref{HermTX}. Now if $\iota:\Lambda\hookrightarrow X$ is a
properly embedded Lagrangian submanifold, we have
a canonical isomorphism
\begin{equation}\label{isoLKX}
\begin{split}
\iota^*K_X&\xrightarrow{~\sim~}\det(T^*_\C\Lambda)\\
\beta&\longmapsto\big[e_1\wedge\cdots\wedge e_n\mapsto
\beta(e_1^{(1,0)},\cdots,e_n^{(1,0)})\big]\,.
\end{split}
\end{equation}
We then have the following consequence of
\cref{theointergal}.

\begin{theorem}\label{corintergal}
Let $\iota_j:\Lambda^{(j)}\to X$ be Lagrangian submanifolds
intersecting transversally
endowed with $f^{(j)}\in\cinf(\Lambda_j,\iota_j^*K_X^{1/2})$, and let
$\{(\Lambda_{p}^{(j)},\zeta_j^p,f_{p}^{(j)})\}_{p\in\N}$
be sequences of Bohr-Sommerfeld submanifolds
converging
smoothly towards $(\Lambda^{(j)},f^{(j)})$, for each $j=1,\,2$.
Then as $p\to+\infty$, we have
\begin{multline}\label{cor<u1u2>}
\big\langle s_{\Lambda_p^{(1)}},s_{\Lambda_p^{(2)}}\big\rangle_{L^2}=
2^{\frac{n}{2}}
\sum_{x\in\Lambda_p^{(1)}\cap\Lambda_p^{(2)}} 
h^{L^p}(\zeta_1^p(x),\zeta_2^p(x))\\
\det{}^{-1/2}_x\left(\sqrt{-1}\om(\xi_i^{(2)},\xi_k^{(1)})
\right)_{i,k}+O(p^{-1})\,,
\end{multline}
for some choices of square root of the determinant,
where for all $p\in\N$ big enough,
all $x\in\Lambda_p^{(1)}\cap\Lambda_p^{(2)}$ and
each $j=1,\,2$, the vectors
$\{\xi_k^{(j)}\}_{k=1}^n$ are bases of
$T_{x}\Lambda^{(j)}$ satisfying
\begin{equation}\label{fxi=1}
(f^{(j)}_p)^2(\xi_1^{(1)},\cdots \xi_n^{(1)})=1\,,
\end{equation}
via the isomorphism
\cref{isoLKX}.
\end{theorem}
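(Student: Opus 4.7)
The plan is to derive \cref{corintergal} as a direct specialization of \cref{theointergal} to the case $K=K_X^{1/2}$ with both $\Lambda^{(j)}$ Lagrangian and intersecting transversally. Since transversality of Lagrangians in a $2n$-dimensional symplectic manifold yields a clean intersection of dimension zero, each connected component $Y^{(q)}$ of $\Lambda^{(1)}\cap\Lambda^{(2)}$ reduces to a single point $x_q$. Setting $\dim\Lambda^{(j)}=n$ and $\dim Y^{(q)}=0$ in the prefactor $p^{\,n-\dim\Lambda^{(1)}/2-\dim\Lambda^{(2)}/2}\cdot p^{\dim Y^{(q)}/2}$ of \cref{<u1u2>} yields $p^0=1$, matching the form of \cref{cor<u1u2>}, while the integral over $Y^{(q)}$ in \cref{bm0} reduces to pointwise evaluation at $x_q$, with the frames $\{e_k^{(j)}\}$ becoming orthonormal bases of the full tangent space $T_{x_q}\Lambda^{(j)}$ relative to $g^{TX}$.

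The next step is to simplify the determinant appearing in \cref{bm0}. Using the Kähler orthogonal splitting $T_{x_q}X=T_{x_q}\Lambda^{(1)}\oplus JT_{x_q}\Lambda^{(1)}$ and expanding $e_i^{(2)}=\sum_k a_{ik}\,e_k^{(1)}+\sum_l b_{il}\,Je_l^{(1)}$, the Lagrangian property of $\Lambda^{(1)}$ combined with the Kähler identities $\om(Jv,Jw)=\om(v,w)$ and $g^{TX}(v,w)=\om(v,Jw)$ gives $g^{TX}(e_i^{(2)},e_r^{(1)})=a_{ir}$ and $\om(e_i^{(2)},e_r^{(1)})=-b_{ir}$. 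Applying $h^{TX}=g^{TX}-\sqrt{-1}\om$ and factoring $-\sqrt{-1}A+B=-\sqrt{-1}(A+\sqrt{-1}B)$, a short direct calculation then yields the factorization of the determinant in \cref{bm0} as $\det(\sqrt{-1}\om(e_i^{(2)},e_k^{(1)}))_{i,k}\cdot\det(h^{TX}(e_i^{(2)},e_k^{(1)}))_{i,k}$.

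Finally, the factor $h^{K_X^{1/2}}(f_p^{(1)}(x_q),f_p^{(2)}(x_q))\cdot\det^{-1/2}(h^{TX}(e_i^{(2)},e_k^{(1)}))_{i,k}$ must be identified with the change-of-basis factor between the orthonormal frames $\{e_k^{(j)}\}$ and the normalized frames $\{\xi_k^{(j)}\}$ satisfying \cref{fxi=1}. Writing $e_k^{(j)}=\sum_l(A^{(j)})_{kl}\,\xi_l^{(j)}$, the matrix $(\om(e_i^{(2)},e_k^{(1)}))$ transforms by the Jacobians $\det A^{(j)}$, and upon squaring, the desired identity reduces to $h^{K_X}((f_p^{(1)})^2,(f_p^{(2)})^2)=\det A^{(1)}\det A^{(2)}\cdot\det(h^{TX}(e_i^{(2)},e_k^{(1)}))$. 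This follows directly from the definition of the induced Hermitian metric on $K_X=\det T^{*(1,0)}X$ via $h^{TX}$, combined with the isomorphism \cref{isoLKX} applied to both $\Lambda^{(j)}$ and the normalization \cref{fxi=1}. Putting everything together yields $b_p^{(q)}=2^{n/2}\det^{-1/2}(\sqrt{-1}\om(\xi_i^{(2)},\xi_k^{(1)}))$, proving \cref{cor<u1u2>}. The main technical obstacle is the careful bookkeeping of the choice of square roots inherent to the half-form formalism—both the sign of the $\xi_k^{(j)}$ coming from \cref{fxi=1} and the branch of $\det^{1/2}$—which is responsible for the qualifier ``for some choices of square root'' in the statement.
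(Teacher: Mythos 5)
Your proposal is correct and follows essentially the same route as the paper: specialize \cref{theointergal} to the transversal Lagrangian case (so each $Y^{(q)}$ is a point and the power of $p$ is $1$), factor the determinant in \cref{bm0} as $\det(h^{TX}(e_i^{(2)},e_r^{(1)}))\cdot\det(\sqrt{-1}\,\om(e_k^{(2)},e_r^{(1)}))$, and absorb the $h^{TX}$ factor together with $h^{K_X^{1/2}}(f_p^{(1)},f_p^{(2)})$ and the change-of-basis Jacobians to the $\xi$-frames via \cref{isoLKX}, using that the transition between the two induced unitary bases of $T^{(1,0)}X$ has unit-modulus determinant. The only cosmetic difference is that you square the half-forms and work in $K_X$ while the paper manipulates $h^{K}(f^{(1)},f^{(2)})=\det^{-1/2}(h^{TX}(\xi_i^{(1)},\xi_k^{(2)}))$ directly, and your coordinate expansion of $e_i^{(2)}$ in $T\Lambda^{(1)}\oplus JT\Lambda^{(1)}$ is an unnecessary detour since the matrix in \cref{bm0} is already a product of two matrices.
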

\begin{proof}
Set $K=K_X^{1/2}$, and 
recall that the Hermitian metric \cref{HermTX} on $(TX,J)$
coincides with the metric on $T^{(1,0)}X$ induced by $g^{TX}$ via the
splitting \cref{splitc}.
Letting $x\in\Lambda_p^{(1)}\cap\Lambda_p^{(2)}$, if $\{\xi_k^{(j)}\}_{k=1}^n$
is a basis of $T_{x}\Lambda^{(j)}$ satisfying \cref{fxi=1}
and $\{e_k^{(j)}\}_{i=1}^{n}$ is an orthonormal basis of
$(T_x\Lambda^{(j)},\iota_j^*g^{TX})$
for each $j=1,\,2$,
we get
\begin{multline}\label{anglecomput}
h^{K}_x(f_{p}^{(1)},f_{p}^{(2)})
=\det{}^{-1/2}\left(h^{TX}_x(\xi_i^{(1)},\xi_k^{(2)})\right)_{i,k}\\
=\det{}^{-1/2}\left(h^{TX}_x(e_i^{(1)},e_k^{(2)})\right)_{i,k}
\det{}^{-1/2}(g^{TX}_x(e_i^{(1)},\xi_k^{(1)}))_{i,k}\\
\det{}^{-1/2}(g^{TX}_x(e_i^{(2)},\xi_k^{(2)}))_{i,k}\,.
\end{multline}
Thus under the assumption that $\dim\Lambda_1=\dim\Lambda_2=n$
and
using the multiplicativity property of the determinant,
we get
\begin{multline}
h^K_x(f_{p}^{(1)},f_{p}^{(2)})\,
\det{}^{-\frac{1}{2}} \Big(\sum_{r=1}^{n-\dim Y^{(q)}}\sqrt{-1}\,h^{TX}_x(e^{(2)}_i,e_r^{(1)})\,\om_x(e_k^{(2)},e_r^{(1)})\Big)_{i,k}\\
=\left|\det{}^{-1/2}\left(h^{TX}_x(e_i^{(1)},e_k^{(2)})\right)_{i,k}\right|^2
\det{}^{-1/2}(g^{TX}_x(e_i^{(1)},\xi_k^{(1)}))_{i,k}\\
\det{}^{-1/2}(g^{TX}_x(e_i^{(2)},\xi_k^{(2)}))_{i,k}
\det{}^{-\frac{1}{2}}\Big(\sqrt{-1}\om_x(e_i^{(2)},e_k^{(1)})\Big)_{i,k}\\
=\det{}^{-1/2}(\sqrt{-1}\om_x(\xi_i^{(2)},\xi_k^{(1)}))_{i,k}\,,
\end{multline}
where we also used the fact that orthonormal bases of
$(T_x\Lambda^{(j)},\iota_j^*g^{TX})$
for each $j=1,\,2$ induce orthonormal bases of $T^{(1,0)}X$ for the induced
Hermitian metric via the splitting \cref{splitc}, and that
a change of orthonormal bases
of a Hermitian vector space is unitary, hence has determinant
of complex modulus $1$.
We then get formula \cref{cor<u1u2>} from formula
\cref{bm0}.
Note that due to the fact the determinant term appearing in the integrand
of \cite[Th.\,4.2,\,(4.4)]{Ioo18b} should be replaced by its
conjugate,
formula \cref{cor<u1u2>} differs from the corresponding formula
in \cite[Rmk.\,4.5,\,(4.38)]{Ioo18b}, where the term $\det\{\Lambda_1,\Lambda_2\}$
should not appear.
\end{proof}

\section{Quantization of group actions}
\label{groupsec}

In this section, we describe the behavior of Berezin-Toeplitz quantization
described in \cref{BTsec} with respect to the action
of a compact Lie group $G$, and its applications to the representation theory
of $G$ via the symplectic geometry of coadjoint orbits.

\subsection{Kostant-Souriau quantization}
\label{KSsec}

%
%

Let $X$ be a smooth manifold endowed with a Hermitian line
bundle $(L,h^L)$ with Hermitian connection $\nabla^L$,
and let $G$ be a compact
Lie group acting on $L$ over $X$, preserving the Hermitian
metric $h^L$ and the Hermitian connection $\nabla^L$.
We write $\mathfrak{g}:=\text{Lie}(G)$ for the Lie algebra of $G$.
We consider the action of $G$ on the space of smooth sections
$\cinf(X,L)$ defined for all $g\in G$, $s\in\cinf(X,L)$ and $x\in X$ by
\begin{equation}\label{action}
(gs)(x):=g.s(g^{-1}x)\,.
\end{equation}
For any $\xi\in\g$, we write $L_\xi$ for the induced \emph{Lie derivative} acting on $s\in\cinf(X,L)$ by
\begin{equation}\label{Lxidef}
L_\xi\,s:=\frac{d}{dt}\Big|_{t=0}e^{-t\xi}s\,,
\end{equation}
and denote by $\xi^X\in\cinf(X,TX)$
the vector field over $X$ induced by the infinitesimal action of $\xi$,
defined by the formula $\xi^X:=\frac{d}{dt}\big|_{t=0}e^{t\xi}x$.
The following definition is taken from \cite[Def.\,7.5]{BGV04}.

\begin{defi}\label{Kostantmudef}
The \emph{moment} $\mu^L:X\to\g^*$ of the action of
$G$ on $(L,h^L,\nabla^L)$ over $X$ is defined by the following formula,
for all $\xi\in\g$ and $s\in\cinf(X,L)$, 
\begin{equation}\label{Kostantmufla}
\<\mu^L,\xi\>\,s
=\frac{\sqrt{-1}}{2\pi}\left(L_\xi\,s-\nabla^L_{\xi^X}s\right)\,.
\end{equation}
\end{defi}

Formula \cref{Kostantmufla} is called the \emph{Kostant formula}.
One readily checks that the right-hand side of formula
\cref{Kostantmufla} is
$\cinf(X)$-linear, so that the left-hand side defines a function
$\<\mu^L,\xi\>\in\cinf(X,\R)$,
and that the application $\mu^L:X\to\g^*$ thus defined is linear and
equivariant with respect to
the \emph{coadjoint action} of $G$ on $\g^*$, defined
for any $g\in G$ and $\lambda\in\g^*$ by
\begin{equation}\label{coaddef}
\<\Ad_g^*\lambda,\xi\>:=\<\lambda,\Ad_{g^{-1}}\xi\>\,.
\end{equation}

In case $(X,\om)$ is a symplectic manifold prequantized by
$(L,h^L,\nabla^L)$ in the sense of \cref{preq}, one readily checks
that \cref{Kostantmudef} defines a \emph{moment map}
$\mu:=\mu^L:X\to\mathfrak{g}^*$ for the symplectic action of $G$ on $(X,\om)$,
so that
\begin{equation}\label{momentfla}
d\<\mu,\xi\>=\iota_{\xi^X}\om\,,
\end{equation}
where $\iota_v$ denotes the interior product with respect to $v\in\cinf(X,TX)$.
Note now that for any $p\in\N$, \cref{Kostantmudef} applied to
the induced action of $G$ on $L^p$ and to $s^p:=s^{\otimes p}\in\cinf(X,L^p)$
readily gives that
$\mu^{L^p}=p\,\mu$, so that for any $s_p\in\cinf(X,L^p)$
and $\xi\in\mathfrak{g}$, we get
\begin{equation}\label{KostantLp}
\frac{\sqrt{-1}}{2\pi}L_\xi s_p=\big(p\,\<\mu,\xi\>+
\frac{\sqrt{-1}}{2\pi}\nabla^{L^p}_{\xi^X}\big)s_p\,.
\end{equation}
More generally, if $G$ acts on a complex line bundle $K$ preserving
a Hermitian metric $h^K$ and a Hermitian connection $\nabla^K$,
\cref{Kostantmudef}
applied $L^p\otimes K$ gives
$\mu^{L^p\otimes K}=p \mu+\mu^K$  for any $p\in\N$,
so that for any $s_p\in\cinf(X,L^p\otimes K)$,
the Kostant formula \cref{Kostantmufla} gives
\begin{equation}\label{KostantK}
\frac{\sqrt{-1}}{2\pi}L_\xi s_p=\big(p\,\<\mu,\xi\>+\<\mu^K,\xi\>+
\frac{\sqrt{-1}}{2\pi}\nabla^p_{\xi^X}\big)s_p\,.
\end{equation}

Let us now assume that $(X,\om)$ is equipped with a compatible $G$-invariant
complex structure $J\in\End(TX)$, making $(X,\om,J)$ into a Kähler manifold
and $(L,h^L)$, $(K,h^K)$ into holomorphic Hermitian line bundles on which
the action
of $G$ lifts holomorphically. In particular, the action of $G$ on
sections preserves the finite-dimensional
subspace $H^0(X,L^p\otimes K)\subset\cinf(X,L^p\otimes K)$ of holomorphic sections,
making $H^0(X,L^p\otimes K)$ into a finite-dimensional representation of $G$,
and the Lie derivative $L_\xi$ defined as in \cref{Lxidef}
preserves $H^0(X,L^p\otimes K)$.
Recalling the \cref{BTdef} of Berezin-Toeplitz quantization,
we then have the following classical formula of Tuynman.

\begin{prop}\label{LxiTp}
{\emph{\cite{Tuy87}}}
For any $\xi\in\mathfrak{g}$ and any $s\in H^0(X,L^p\otimes K)$,
we have
\begin{equation}\label{LxiTp&fla}
\frac{\sqrt{-1}}{2\pi p}L_\xi s=T_p(\<\mu,\xi\>)s
+\frac{1}{p} T_p\big(\<\mu^K,\xi\>+
\frac{1}{4\pi}\Delta^X \<\mu,\xi\>\big)s\,,
\end{equation}
where $\Delta^X$ is the Laplace-Beltrami operator of $(X,g^{TX})$
acting on $\cinf(X,\R)$.
\end{prop}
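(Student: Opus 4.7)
The natural starting point is the Kostant formula \cref{KostantK}: for any $s \in H^0(X, L^p\otimes K)$,
\begin{equation*}
\frac{\sqrt{-1}}{2\pi}L_\xi s = \bigl(p\langle\mu,\xi\rangle + \langle\mu^K,\xi\rangle\bigr)\,s + \frac{\sqrt{-1}}{2\pi}\nabla^p_{\xi^X}s.
\end{equation*}
Since the $G$-action on $L^p\otimes K$ is holomorphic and preserves all Hermitian data, $L_\xi$ commutes with the orthogonal projection $P_p$; in particular $L_\xi s$ is itself holomorphic. Applying $P_p$ to both sides and using $P_p s = s$ converts the multiplicative terms into Berezin-Toeplitz operators, so that \cref{LxiTp&fla} is equivalent to the identity
\begin{equation*}
\frac{\sqrt{-1}}{2\pi}P_p\bigl(\nabla^p_{\xi^X}s\bigr) = \frac{1}{4\pi}\,T_p\bigl(\Delta^X\langle\mu,\xi\rangle\bigr)s,
\end{equation*}
and the remainder of the argument is devoted to computing the left-hand side.

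Setting $f := \langle\mu,\xi\rangle$, the moment-map equation \cref{momentfla} combined with the Kähler relation \cref{gTX} yields $\xi^X = -J\,\grad f$. Decomposing along the splitting \cref{splitc} and using that $J$ acts as multiplication by $\pm\sqrt{-1}$ on $T^{(1,0)}X$ and $T^{(0,1)}X$, one finds $(\xi^X)^{(1,0)} = -\sqrt{-1}\,\grad^{(1,0)}f$, where $\grad^{(1,0)}f$ denotes the $(1,0)$-component of $\grad f$. Because $s$ is holomorphic and the $(0,1)$-part of the Chern connection $\nabla^p$ on $L^p\otimes K$ coincides with $\dbar$, the derivative $\nabla^p_{v}s$ vanishes for every $v \in T^{(0,1)}X$, whence
\begin{equation*}
\nabla^p_{\xi^X}s = \nabla^p_{(\xi^X)^{(1,0)}}s = -\sqrt{-1}\,\nabla^p_{\grad^{(1,0)}f}\,s.
\end{equation*}

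It remains to identify $P_p(\nabla^p_{\grad^{(1,0)}f}s)$ as a Berezin-Toeplitz operator. Testing against an arbitrary $t \in H^0(X, L^p\otimes K)$ and using self-adjointness of $P_p$, compatibility of $\nabla^p$ with $h^p$ applied to the $(1,0)$-vector field $\grad^{(1,0)}f$, and the holomorphicity of $t$ to kill the second slot contribution, we obtain
\begin{equation*}
\bigl\langle P_p\bigl(\nabla^p_{\grad^{(1,0)}f}s\bigr),\,t\bigr\rangle_{L^2} = \int_X (\grad^{(1,0)}f)\cdot h^p(s,t)\,dv_X.
\end{equation*}
The divergence theorem rewrites this as $-\int_X h^p(s,t)\,\mathrm{div}(\grad^{(1,0)}f)\,dv_X$, and the Kähler identity $\mathrm{div}(\grad^{(1,0)}f) = -\tfrac{1}{2}\Delta^X f$, equivalent to the well-known relation $\Delta^X = 2\dbar^*\dbar$ on real functions, converts this into $\tfrac{1}{2}\langle T_p(\Delta^X f)s,\,t\rangle_{L^2}$. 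Since $t$ was arbitrary, we conclude $P_p(\nabla^p_{\grad^{(1,0)}f}s) = \tfrac{1}{2}T_p(\Delta^X f)s$, which substituted back into the preceding displays yields \cref{LxiTp&fla}. The calculation is conceptually routine; the main obstacle is the careful bookkeeping of the $\sqrt{-1}$-factors arising from the splitting \cref{splitc} and of the sign convention for $\Delta^X$, so that the coefficient $\tfrac{1}{4\pi}$ in \cref{LxiTp&fla} emerges correctly.
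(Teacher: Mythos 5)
Your proof is correct and follows essentially the same route as the paper: start from the Kostant formula \cref{KostantK}, use holomorphicity to reduce $\nabla^p_{\xi^X}$ to its $(1,0)$-part, integrate by parts against a holomorphic test section, and invoke the Kähler identity relating the divergence of $\grad^{(1,0)}\<\mu,\xi\>$ to $\Delta^X\<\mu,\xi\>$. The only difference is cosmetic — you phrase the integration by parts via the divergence theorem for the $(1,0)$-gradient, whereas the paper uses Cartan's formula, Stokes, and the identity $\tfrac{1}{2}\Delta^Xf\,\tfrac{\om^n}{n!}=\sqrt{-1}\,\tfrac{\dbar\partial f\wedge\om^{n-1}}{(n-1)!}$, which is the same computation in form language — and your sign bookkeeping lands on the correct coefficient $\tfrac{1}{4\pi}$.
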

\begin{proof}
For any $s_1,\,s_2\in H^0(X,L^p\otimes K)$, the Kostant formula \cref{KostantK}
implies
\begin{equation}\label{LxiTpdem}
\frac{\sqrt{-1}}{2\pi}\<L_\xi s_1,s_2\>_{L^2}=
\<(p\,\<\mu,\xi\>+\<\mu^K,\xi\>)s_1,s_2\>_{L^2}+
\frac{\sqrt{-1}}{2\pi}\<\nabla^p_{\xi^X} s_1,s_2\>_{L^2}\,.
\end{equation}
Recall on the other hand that
by definition of a holomorphic section, we have
$\nabla^p_{v^{(0,1)}}s=0$ for all $v\in\cinf(X,TX)$ and
$s\in H^0(X,L^p\otimes K)$.
Thus by definition \cref{L2} of the $L^2$-product and using that the connection
$\nabla^p$ induced by $\nabla^L,\,\nabla^K$ on $L^p\otimes K$
preserves the Hermitian product $h^p$ induced by $h^L,\,h^K$,
we get
\begin{equation}
\begin{split}
\<\nabla^p_{v} s_1,s_2\>_{L^2}&=
\int_X\,h^p(\nabla^p_{v^{(1,0)}} s_1,s_2)\,dv_X\\
&=\int_X L_{v^{(1,0)}}h^p(s_1,s_2)\,dv_X\\
&=-\int_X h^p(s_1,s_2)\,d\iota_{v^{(1,0)}}\frac{\om^n}{n!}\,,
\end{split}
\end{equation}
where we also used Stokes theorem, the definition \cref{voldef} of the Liouville volume form and Cartan's formula $L_\xi=\iota_\xi d+d\iota_\xi$.
Using now the fact that $v^{(1,0)}=v-iJv$, which readily follows from the
definition of the splitting \cref{splitc}, and that $\om$ is $J$-invariant,
we get for any $w\in\cinf(X,TX)$ that
\begin{equation}
\iota_{v^{(1,0)}}\om(w)=\om(v,w)+i\om(v,Jw)=\iota_v\om(w^{(0,1)})\,.
\end{equation}
By the fundamental property \cref{momentfla} of a moment map
and using that $\om$ is closed,
we then compute for all $\xi\in\g$ that
\begin{equation}
\begin{split}
d\iota_{(\xi^{X})^{(1,0)}}\frac{\om^n}{n!}
&=d\,\left(\frac{\dbar\<\mu,\xi\>\wedge\om^{n-1}}{(n-1)!}\right)\\
&=-\frac{\sqrt{-1}}{2}\Delta^X\<\mu,\xi\>\ \frac{\om^{n}}{n!}\,,
\end{split}
\end{equation}
where we also used the
classical formula of Kähler geometry, which can be deduced for instance from
\cite[Ex.\,1.8]{Sze14},
\begin{equation}
\frac{1}{2}\Delta^Xf\ \frac{\om^n}{n!}=
\sqrt{-1}\frac{\dbar\partial f\wedge\om^{n-1}}{(n-1)!}\,.
\end{equation}
Using that the orthogonal projection \cref{projdef}
with respect to the $L^2$-product \cref{L2} restricts to the identity
on $H^0(X,L^p\otimes K)$, we thus get from equation \cref{LxiTpdem} that
\begin{equation}
\begin{split}
\frac{\sqrt{-1}}{2\pi}\<L_\xi s_1,s_2\>_{L^2}&=
\left\langle\big(p\,\<\mu,\xi\>+\<\mu^K,\xi\>+\frac{1}{4\pi}\Delta^X\<\mu,\xi\>\big)s_1,s_2\right\rangle_{L^2}\\
&=\left\langle T_p\big(p\,\<\mu,\xi\>+\<\mu^K,\xi\>+\frac{1}{4\pi}\Delta^X\<\mu,\xi\>\big)s_1,s_2\right\rangle_{L^2}\,,
\end{split}
\end{equation}
which is true for all $s_1,\,s_2\in H^0(X,L^p\otimes K)$. This proves the result.
\end{proof}

Recall now the natural ring isomorphism
\begin{equation}\label{Rg=Sg}
\begin{split}
S(\g)&\xrightarrow{~\sim~}\R[\g^*]\\
\xi_1\cdots\xi_j&\longmapsto
\left[\alpha\mapsto\<\alpha,\xi_1\>\cdots\<\alpha,\xi_j\>\right]
\end{split}
\end{equation}
between the real symmetric algebra $S(\g)$ of $\g$ and
the ring $\R[\g^*]$ of polynomials with real coefficients
over $\g^*$. We then have the following definition, which will play a central
role for the proof of \cref{mainth} in \cref{GZquantsec}.

\begin{defi}\label{Qpdef}
For any $p\in\N$, the \emph{symmetric quantization} of the action of $G$
on $H^0(X,L^p\otimes K)$ is the linear map defined through the identification
\cref{Rg=Sg} by
\begin{equation}\label{Qpfla}
\begin{split}
Q_p:\R[\g^*]&\longrightarrow \End(H^0(X,L^p\otimes K))\\
\xi_1\cdots\,\xi_j &\longmapsto \frac{(-1)^j}{j!}
\sum_{\sigma\in\Sigma_j}L_{\xi_{\sigma(1)}}\cdots L_{\xi_{\sigma(j)}}\,.
\end{split}
\end{equation}
\end{defi}

Note that by definition of the action \cref{action} and of the Lie derivative
\cref{Lxidef}, the operator $Q_p(\xi)\in\End(H^0(X,L^p\otimes K))$ coincides
with the infinitesimal action of $\xi\in\g$ on $H^0(X,L^p\otimes K)$.
The following fundamental semiclassical property of the symmetric
quantization of \cref{Qpdef} readily follows from the tools of Berezin-Toeplitz
quantization described in \cref{BTsec}.

\begin{prop}\label{L=Tp}
For any homogeneous polynomial $P\in\R[\g^*]$ of degree $j\in\N$, we have
the following asymptotic expansion in the sense of the operator norm
as $p\to+\infty$,
\begin{equation}
\frac{1}{(2\pi\sqrt{-1} p)^j}Q_p(P)=T_p(\mu^*P)+O(p^{-1})\,.
\end{equation}
\end{prop}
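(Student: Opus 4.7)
The plan is to reduce to a monomial $P=\xi_1\cdots\xi_j$ by $\R$-linearity of both sides in $P$ (using the identification \cref{Rg=Sg}) and then compose $j$ copies of Tuynman's formula, controlling the error via the product formula \cref{BMS1} of Bordemann-Meinrenken-Schlichenmaier.

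More precisely, I would first rewrite \cref{LxiTp&fla} as the operator norm estimate
\begin{equation*}
\frac{\sqrt{-1}}{2\pi p}L_\xi=T_p(\<\mu,\xi\>)+O(p^{-1}),
\end{equation*}
valid on $H^0(X,L^p\otimes K)$, which follows from Tuynman's \cref{LxiTp} together with the uniform bound on Berezin-Toeplitz operators in \cref{BMS0}. Equivalently $L_\xi=-2\pi\sqrt{-1}\,p\,T_p(\<\mu,\xi\>)+O(1)$. Then I would iterate: for $\xi_1,\dots,\xi_j\in\g$, composing this $j$ times and applying \cref{BMS1} to merge the product of Toeplitz operators yields
\begin{equation*}
L_{\xi_1}\cdots L_{\xi_j}=(-2\pi\sqrt{-1}\,p)^j\,T_p\big(\<\mu,\xi_1\>\cdots\<\mu,\xi_j\>\big)+O(p^{j-1})
\end{equation*}
in operator norm, since at each step the two error types (the $O(1)$ from Tuynman and the $O(p^{-1})$ from \cref{BMS1}) contribute at most $O(p^{j-1})$ after being multiplied with the remaining $T_p$-factors, each of which is uniformly bounded by \cref{BMS0}.

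Averaging this identity over $\sigma\in\Sigma_j$ with the prefactor $(-1)^j/j!$ as in \cref{Qpfla} shows that the right-hand side is already symmetric to leading order (the symmetrization only affects the error term, since all $\<\mu,\xi_i\>$ are scalar functions commuting pointwise), so
\begin{equation*}
Q_p(\xi_1\cdots\xi_j)=(2\pi\sqrt{-1}\,p)^j\,T_p\big(\<\mu,\xi_1\>\cdots\<\mu,\xi_j\>\big)+O(p^{j-1}).
\end{equation*}
Dividing by $(2\pi\sqrt{-1}\,p)^j$ and recognizing the product $\<\mu,\xi_1\>\cdots\<\mu,\xi_j\>$ as $\mu^*P$ under \cref{Rg=Sg} yields the claim; linearity then extends it to all homogeneous polynomials of degree $j$.

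The only step that requires care is the operator-norm bookkeeping in the iterated composition, because one is multiplying $j$ operators each of norm $O(p)$ and one must verify that the successive error terms accumulate only to $O(p^{j-1})$ so that, after dividing by $p^j$, the remainder is genuinely $O(p^{-1})$; this is routine but is the one place where \cref{BMS0,BMS1} must both be invoked. Once that is done, the symmetrization over $\Sigma_j$ and identification with $\mu^*P$ are immediate.
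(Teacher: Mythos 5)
Your proposal is correct and follows essentially the same route as the paper's proof: iterate Tuynman's formula (\cref{LxiTp}), merge the resulting Toeplitz products with \cref{BMS1}, control the lower-order terms with \cref{BMS0}, and observe that the leading term is already symmetric so the averaging in \cref{Qpfla} only affects the remainder. The sign bookkeeping with the prefactor $(-1)^j/j!$ also comes out as in the paper, so nothing is missing.
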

\begin{proof}
Using \cref{LxiTp} and applying repeatedly formula \cref{BMS1} of \cref{BMS},
we readily get the following asymptotic formula in the sense of the operator
norm as $p\to+\infty$, for any
$\xi_1,\cdots,\xi_k\in\g$,
\begin{equation}
\frac{\sqrt{-1}^k}{(2\pi p)^k}L_{\xi_1}\cdots L_{\xi_2}=T_p(\<\mu,\xi_1\>
\cdots\<\mu,\xi_n\>)+O(p^{-1})\,,
\end{equation}
where we also used formula \cref{BMS0} of \cref{BMS} on the terms of
order at most $p^{-1}$. As the right-hand side
does not depend on the ordering of $\xi_1,\cdots,\xi_k\in\g$ in the left-hand side,
this implies the result by \cref{Qpdef} via the identification \cref{Rg=Sg}.
\end{proof}

\subsection{Structure and representation theory of compact Lie groups}
\label{structsec}

Let $G$ be a compact Lie group, and fix a \emph{maximal torus} $T\subset G$,
that is an abelian subgroup of $G$ which is maximal for the inclusion among
abelian subgroups. Recall for instance from \cite[Chap.\,IV,\,(1.6)]{BD85}
that any two maximal tori in $G$ are conjugate to each other, and that every
$g\in G$ is contained in a maximal torus.
Write $N(T)\subset G$ for the normalizer of $T$ inside $G$, and
note in particular that $T\subset N(T)$. 
Following \cite[Chap.\,IV]{BD85}, let us write
\begin{equation}\label{Wdef}
W:=N(T)/T
\end{equation}
for the \emph{Weyl group} of $G$, which is a finite group with an action
on $T$ by automorphisms induced from the action of $N(T)$ by conjugation, so
that two elements of $T$ are conjugate in $G$ if and only if they belong to
the same orbit of $W$.

Write $\g:=\Lie(G)$ for the Lie algebra of $G$ and $\t:=\Lie(T)\subset\g$
for the Lie subalgebra of $T\subset G$. Let us fix once and for all a
$G$-invariant scalar product $\<\cdot,\cdot\>_\g$ on $\g$, inducing
identifications $\g\simeq\g^*$ and $\t\simeq\t^*$ which will be understood
in the whole sequel.
In particular, we will often consider the inclusion $\t^*\subset\g^*$
induced by $\t\subset\g$.
We write $\<\cdot,\cdot\>_{\g^*}$ for the scalar product on
$\g^*$ induced by $\<\cdot,\cdot\>_\g$.
Following \cite[Chap.V,\,(2.15)]{BD85}, let $\Gamma^*\subset\t^*$ be the
\emph{lattice of integral forms}, defined by
\begin{equation}\label{LT}
\Gamma^*:=\{\alpha\in\t^*~|~\<\alpha,\xi\>\in \Z,
\text{ for all }\xi\in\t\text{ such that }\exp(\xi)=1\}\,,
\end{equation}
Following \cite[Chap.V,\,\S2]{BD85},
there exists a finite subset $R\subset \Gamma^*$, called the
\emph{set of roots}, such that we have a decompostion
\begin{equation}\label{gCdec}
\g_\C=\t_\C\oplus\bigoplus_{\alpha\in R}(\g_\C)_\alpha\,,
\end{equation}
of the complexified Lie algebra $\g_\C:=\g\otimes\C$ into the common
eigenspaces
\begin{equation}\label{gCalpha}
(\g_\C)_\alpha:=
\{\xi\in\g~|~
[\eta,\xi]=2\pi\sqrt{-1}\<\alpha,\eta\>\,\xi\text{ for all }\eta\in\t\}\,,
\end{equation}
for the adjoint action of $\t$ on $\g$, which satisfy $\dim_\C\g_\alpha=1$
for all $\alpha\in R$ and where $\t_\C:=\t\otimes\C$.
Let us define the open set of
\emph{regular points} of $\t^*$ by
\begin{equation}\label{tregdef}
\t^*_{\text{reg}}:=\{\lambda\in\t^*~|~\<\alpha,\lambda\>_{\g^*}\neq 0\,
\text{ for all }\alpha\in R\}\,.
\end{equation}
The connected components of $\t^*_{\text{reg}}\subset\t^*$ are called
\emph{Weyl chambers}, and the Weyl group
$W$ acts simply and transitively on the set of Weyl chambers via the action
on $\t^*$ induced by its action on $T$. We
fix once and for all a set of \emph{positive roots} $R_+\subset R$, which by
definition satisfies
\begin{equation}\label{posroot}
R=R_+\cup(-R_+)\ \text{ and }\ R_+\cap(-R_+)=\0\,.
\end{equation}
This allows to single out a Weyl chamber, called the fundamental or
\emph{positive Weyl chamber} corresponding to $R_+\subset R$, defined by
\begin{equation}\label{Weylchamber}
\t^*_+:=\{\lambda\in\t^*~|~\<\alpha,\lambda\>_{\g^*}>0
\ \text{ for all }\ \alpha\in R_+\}\,.
\end{equation}
Following \cite[Chap.V,\,(4.11)]{BD85}, let us finally introduce
the \emph{half-sum of positive roots}
\begin{equation}\label{rhodef}
\rho:=\frac{1}{2}\sum_{\alpha\in R_+}\alpha\in\t^*\,,
\end{equation}
which plays a key role in the structure theory of compact Lie groups, as well
as their representation theory described below.

Let now $V$ be a finite-dimensional unitary representation of $G$,
let $T_0\subset T$ be a subtorus of the maximal torus of $G$,
and write $\t_0:=\Lie(T_0)\subset\t$ for its Lie subalgebra.
Following \cite[Chap.\,III,\,\S\,2]{Kna01},
the action of $T_0\subset G$ induced on $V$
by restriction
leads to a unique orthogonal decomposition
\begin{equation}\label{Tirrepdec}
V=\bigoplus_{\lambda\in\Gamma^*\cap\,\t_0^*}\,V_\lambda\,,
\end{equation}
preserved by the action of $T_0$, where
\begin{equation}\label{wghtspacedef}
V_\lambda
:=\{v\in V~|~\exp(\xi).v=e^{2\pi\sqrt{-1}\<\lambda,\xi\>}v\text{ for all }\xi\in\t_0\,\}\,.
\end{equation}
The finite subset
\begin{equation}\label{WVdef}
\Gamma_V^{T_0}:=\{\lambda\in\Gamma^*\cap\t_0~|~V_\lambda\neq 0\}\subset\Gamma^*
\end{equation}
is called the \emph{set of weights} of the action of $T_0$ on $V$.
If $T_0=T$, we simply write $\Gamma_V:=\Gamma_V^{T_0}$, and call it
the \emph{set of weights} of $V$.
Note that it is independant of the choice of the maximal torus
$T\subset G$ since two maximal tori are conjugate, and that the
finite-dimensional representation $\g_\C$ of $G$ by adjoint action has
$\Gamma_{\g_\C}=R\cup\{0\}$ as set of weights.

Following for instance \cite[Th.\,4.28]{Kna01}, we introduce
the following notion, which plays a fundamental role in representation theory.
\begin{defi}
\label{highestwghtdef}
An irreducible representation $V$ of $G$
is said to have \emph{highest weight} $\lambda\in \Gamma_V$ if
for all $\gamma\in \Gamma_V$, we have
\begin{equation}
\lambda-\gamma\in\t^*_+\,.
\end{equation}
\end{defi}
Write $\overline{\mathfrak{t}^*_+}\subset\t^*$ for the closure of
the positive Weyl chamber \cref{Weylchamber} inside $\t^*$.
The following fundamental
classification of finite-dimensional representations of $G$ is due to Weyl.

\begin{theorem}\label{highestwghtth}
{\cite[Th.\,4.28]{Kna01}}
There is a bijective correspondence between the
set of unitary irreducible representations of $G$ and the discrete set
\begin{equation}\label{Gammadef}
\overline{\Gamma}_+^*:=\Gamma^*\cap\overline{\mathfrak{t}^*_+}\,,
\end{equation}
in such a way that for any $\lambda\in\Gamma_+^*$,
there exists a unique finite-dimensional irreducible representation
$V(\lambda)$ of $G$ with highest weight $\lambda$.
\end{theorem}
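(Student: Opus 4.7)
The plan is to establish two things: (i) every finite-dimensional unitary irreducible representation of $G$ has a unique highest weight, and this weight lies in $\overline{\Gamma}_+^*$; (ii) every $\lambda \in \overline{\Gamma}_+^*$ arises from exactly one isomorphism class of irreducibles. Both halves rest on the interplay between the weight decomposition \cref{Tirrepdec} and the root decomposition \cref{gCdec}.

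For (i), I would equip $\t^*$ with the partial order $\mu \preceq \lambda$ iff $\lambda - \mu$ is a non-negative integer combination of elements of $R_+$. Given an irreducible $V$, finite-dimensionality of the weight decomposition \cref{Tirrepdec} forces $\Gamma_V$ to contain a $\preceq$-maximal element $\lambda$; by the eigenvalue description \cref{gCalpha}, the positive root spaces $(\g_\C)_\alpha$ shift weights upward by $\alpha$, so any nonzero $v \in V_\lambda$ is annihilated by all of them. The standard $\mathfrak{sl}_2$-triple attached to each simple coroot applied to $v$ then forces $\langle \lambda, \alpha\rangle_{\g^*} \geq 0$ for every $\alpha \in R_+$, placing $\lambda$ in the closure of the positive Weyl chamber \cref{Weylchamber}; integrality comes directly from \cref{wghtspacedef,LT}. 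A Poincaré--Birkhoff--Witt argument applied to $\g_\C$ then shows that $v$ generates $V$ under the $\g_\C$-action, so every other weight of $V$ is $\preceq \lambda$, which gives both uniqueness of $\lambda$ inside $\Gamma_V$ and, via the usual graph-of-isomorphism construction inside a direct sum of two irreducibles sharing the same highest weight, rigidity of the correspondence.

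For (ii), I would invoke the Borel--Weil construction that the paper formalizes in \cref{BWth}: given $\lambda \in \overline{\Gamma}_+^*$, the integrality condition \cref{LT} is exactly what is needed to exponentiate $\lambda$ to a unitary character of $T$ and thence to build a $G$-equivariant Hermitian holomorphic line bundle $L_\lambda$ over the coadjoint orbit $X_\lambda \subset \g^*$ whose curvature satisfies the prequantization formula \cref{preq} with respect to the Kirillov--Kostant--Souriau form on $X_\lambda$. The Kostant formula \cref{Kostantmufla} then identifies the weights occurring in $H^0(X_\lambda, L_\lambda)$ as a subset of $\{\mu \in \Gamma^* \mid \mu \preceq \lambda\}$ with $\lambda$ realized on the fibre above $\lambda \in \g^*$; irreducibility of the resulting representation follows by combining Kodaira-type vanishing with Schur's lemma, using the dominance of $\lambda$ to control the positivity of $L_\lambda$.

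The main obstacle is the existence half (ii): the necessary conditions in (i) are essentially extracted from the weight/root combinatorics, but producing an actual irreducible of prescribed highest weight requires a genuine construction. The Borel--Weil route is the natural one here because it uses the same Kähler-geometric machinery the paper develops throughout, though one could alternatively build the representation algebraically through a Verma module over $\g_\C$ and extract a finite-dimensional quotient using the Weyl-group symmetry of the character.
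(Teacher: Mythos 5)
The paper offers no proof of this statement: it is quoted as a classical result with the reference to Knapp, so there is no in-text argument to compare yours against. Your sketch is the standard theorem-of-the-highest-weight proof and is essentially sound. Part (i) --- a $\preceq$-maximal weight exists by finiteness of $\Gamma_V$, is annihilated by the positive root spaces because of the eigenvalue description \cref{gCalpha}, is dominant by $\mathfrak{sl}_2$-theory applied to each simple coroot, is integral by \cref{wghtspacedef} and \cref{LT}, generates $V$ by Poincar\'e--Birkhoff--Witt, and pins down $V$ up to isomorphism by the graph argument in $V\oplus W$ --- is exactly the classical uniqueness half.

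Two caveats on your existence half (ii). First, you cannot literally invoke \cref{BWth}: as stated in the paper it reads $V(\lambda)=H^0(X_\lambda,L_\lambda)$ for the representation $V(\lambda)$ whose existence is precisely what is being proved, so quoting it would be circular. What you may use is the underlying construction (\cref{Lgamdef}, \cref{Lpreqprop}, \cref{T10O}), after which you must verify independently that $H^0(X_\lambda,L_\lambda)$ is nonzero and irreducible with highest weight $\lambda$. Second, ``Kodaira-type vanishing combined with Schur's lemma'' is not the mechanism that delivers irreducibility: the standard argument is that the space of $\mathfrak{n}^+$-invariant (highest-weight) holomorphic sections is one-dimensional, because such a section is determined by its value at the base point through the open Bruhat cell; dominance of $\lambda$ enters in showing $H^0\neq 0$ (for instance by exhibiting an explicit section, or via the Weyl dimension formula), not in a vanishing theorem for $H^0$ itself. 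With these repairs --- or with the Verma-module alternative you mention, which avoids the geometry entirely --- your outline is a complete and correct proof of the cited theorem.
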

The discrete set \cref{Gammadef} parameterizing the irreducible
representations of $G$ is called the
\emph{lattice of integral dominant weights}.

Let us finish this Section by describing the case of
the unitary group $G=U(n)$ for any $n\in\N^*$,
with Lie algebra $\u(n):=\textup{Lie}(U(n))$, which we adapt from
\cite[p.\,120]{GS83} to fit with our conventions.
Following \cite[Chap.\,IV,\,\S\,3]{BD85},
we make the natural choice of maximal torus
$T\subset U(n)$ consisting of the diagonal matrices, so that the associated
Weyl group \cref{Wdef} coincides with the $n^{\text{th}}$ symmetric group
$\mathfrak{S}_n$ acting on
diagonal matrices in $T\subset G$ by permutation of the diagonal entries.
Recall that $\u(n)$ is naturally identified with the space of $n\times n$
anti-Hermitian matrices,
and consider the $U(n)$-equivariant identification of its dual $\u(n)^*$
with the space
$\Herm(\C^n)$ of $n\times n$ Hermitian matrices given by
\begin{equation}\label{Herm=un}
\begin{split}
\Herm(\C^n)&\xrightarrow{~\sim~}\u(n)^*\\
2\pi\sqrt{-1}A&\longmapsto\big[B\mapsto\Tr[AB]\big]\,.
\end{split}
\end{equation}
We endow $\u(n)^*$ with the $U(n)$-invariant scalar product
$\<\cdot,\cdot\>_{\g^*}$
induced by the trace product on $\Herm(\C^n)$
under this identification. Then the subspace
$\t^*\subset\g^*$ induced by $\t\subset\g$
coincides  under this identification with the subspace of $\Herm(\C^n)$
consisting of diagonal matrices with real coefficients,
so that we have a natural identification
\begin{equation}\label{t*=Rn}
\t^*\simeq\R^n\,,
\end{equation}
on which the natural action of the Weyl group $W=\mathfrak{S}_n$ acts by
permutations of the coordinates. Then
the lattice of integral weights \cref{Gammadef} is given
by the lattice of integral points $\Gamma^*\simeq\Z^n$. Following
\cite[Chap.V,\,\S\,6]{BD85}, there exists a natural
choice of positive roots \cref{posroot} for which the
the associated positive Weyl chamber \cref{Weylchamber} is given by
\begin{equation}\label{WeylchamberUn}
\mathfrak{t}^*_+=\{(\lambda_1,\cdots\lambda_n)\in\R^n
~|~\lambda_1>\lambda_2>\cdots>\lambda_n\}\,,
\end{equation}
so that the lattice of dominant weights  \cref{Gammadef} is given by
\begin{equation}\label{GammaUndef}
\overline{\Gamma}^*_+=\{(\lambda_1,\cdots\lambda_n)\in\Z^n
~|~\lambda_1\geq\lambda_2\geq\cdots\geq\lambda_n\}\,.
\end{equation}
By \cite[Chap.V,\,Prop.\,6.2.(vi)]{BD85},
the half-sum of positive roots \cref{rhodef}
is given by
\begin{equation}\label{rhon}
\rho_n=
\left(\frac{n-1}{2},\frac{n-3}{2},\cdots,-\frac{n-3}{2},-\frac{n-1}{2}\right)\,.
\end{equation}

\subsection{Coadjoint orbits}
\label{coadsec}



Recall that we fixed a
$G$-invariant scalar product $\<\cdot,\cdot\>_\g$ on $\g$, inducing an
identification $\g\simeq\g^*$. Via this identification,
the usual adjoint action of $G$ on $\g$ corresponds to
the coadjoint action \cref{coaddef} of $G$ on $\g^*$.
Recall also that we chose a maximal torus $T\subset G$,
and that we have an inclusion $\t^*\subset\g^*$ induced by the $G$-invariant
scalar product on $\g$. For any $\lambda\in\mathfrak{t}^*$, we write
$X_\lambda\subset\mathfrak{g}^*$ for the associated
\emph{coadjoint orbit}, defined by
\begin{equation}\label{Oldef}
X_\lambda:=\{\Ad_g^*\lambda\in\g^*~|~
g\in G\}\,.
\end{equation}
Note that since any element in $G$ is conjugate to an element in $T\subset G$,
all coadjoint orbits in $\g^*$ are of the form $X_\lambda\subset\g^*$
for some $\lambda\in\t^*$, and that two elements $\lambda,\,\gamma\in\t^*$
belong to the same coadjoint orbit $X_\lambda=X_\gamma\subset\g^*$ if
and only if they belong to the same orbit of the Weyl group \cref{Wdef}.

Note also that the coadjoint action of $G$ on $X_\lambda$ is transitive by
definition, so that its tangent space $T_\alpha X_\lambda$
at any point $\alpha\in X_\lambda$ is spanned by 
the vector fields
$\xi^{X_\lambda}\in\cinf(X_\lambda,TX_\lambda)$ 
induced by the infinitesimal action of $\g$, for all $\xi\in\g$.
The following fundamental
property of coadjoint orbits can be found in \cite[Lem.\,7.22]{BGV04}.

\begin{prop}\label{KKSprop}
For any $\lambda\in\t^*$, the associated coadjoint orbit $X_\lambda$
is endowed with a natural $G$-invariant symplectic form
$\om\in\Om^2(X_\lambda,\R)$, defined for any
$\xi_1,\,\xi_2\in\mathfrak{g}$ and $\alpha\in X_\lambda$ by
\begin{equation}\label{KKS}
\om_\alpha(\xi_1^{X_\lambda},\xi_2^{X_\lambda})=\alpha([\xi_1,\xi_2])\,,
\end{equation}
and the action of $G$ on $(X_\lambda,\om)$ is Hamiltonian,
with moment map
\begin{equation}\label{momentdef}
\mu:X_\lambda\lhook\joinrel\longrightarrow\mathfrak{g}^*
\end{equation}
given by the inclusion.
\end{prop}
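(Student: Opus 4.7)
My plan is to verify in turn that the formula \cref{KKS} well-defines a $2$-form, that this form is closed, non-degenerate and $G$-invariant, and that the inclusion \cref{momentdef} satisfies the moment map equation \cref{momentfla}. This is the classical Kirillov--Kostant--Souriau construction.

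First, I would show that \cref{KKS} is well-defined. Since the coadjoint action is transitive, any tangent vector at $\alpha\in X_\lambda$ is of the form $\xi^{X_\lambda}_\alpha$ for some $\xi\in\g$, but this representation is not unique. A direct computation shows that $\xi^{X_\lambda}_\alpha=-\ad_\xi^*\alpha$, so that $\xi^{X_\lambda}_\alpha=0$ if and only if $\alpha([\xi,\eta])=0$ for all $\eta\in\g$. This immediately implies that the right-hand side of \cref{KKS} depends only on $\xi_1^{X_\lambda}_\alpha$ and $\xi_2^{X_\lambda}_\alpha$ by the skew-symmetry of the Lie bracket. Skew-symmetry of $\om$ itself and $G$-invariance follow respectively from that of the Lie bracket and from the $\Ad$-invariance of the coadjoint action together with the identity $\Ad_g[\xi_1,\xi_2]=[\Ad_g\xi_1,\Ad_g\xi_2]$. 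Non-degeneracy is immediate from the characterization of $\ker(\xi\mapsto\xi^{X_\lambda}_\alpha)$ above.

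Next, I would verify closedness of $\om$. Using the global formula for the exterior derivative on the fundamental vector fields $\xi_i^{X_\lambda}$ together with the standard identity $[\xi^{X_\lambda},\eta^{X_\lambda}]=-[\xi,\eta]^{X_\lambda}$, the computation reduces to the cyclic sum $\alpha\bigl([\xi_1,[\xi_2,\xi_3]]+[\xi_2,[\xi_3,\xi_1]]+[\xi_3,[\xi_1,\xi_2]]\bigr)$, which vanishes by the Jacobi identity in $\g$. This is the main technical point of the proof, though purely formal.

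Finally, for the moment map assertion, consider the function $\<\mu,\xi\>(\alpha)=\<\alpha,\xi\>$ obtained from the inclusion \cref{momentdef}. For any $\eta\in\g$, differentiating along the coadjoint flow gives
\begin{equation}
d\<\mu,\xi\>_\alpha(\eta^{X_\lambda})=\frac{d}{dt}\Big|_{t=0}\<\Ad^*_{\exp(t\eta)}\alpha,\xi\>=-\<\alpha,[\eta,\xi]\>=\alpha([\xi,\eta])=\om_\alpha(\xi^{X_\lambda},\eta^{X_\lambda})\,,
\end{equation}
which is precisely the moment map equation \cref{momentfla}. This completes the proof.
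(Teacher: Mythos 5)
Your proof is correct, and it is the standard Kirillov--Kostant--Souriau argument. The paper itself does not prove \cref{KKSprop} at all: it simply cites \cite[Lem.\,7.22]{BGV04}, so your write-up supplies the argument the paper delegates to the literature. All the key points are in place: the identification of $\ker(\xi\mapsto\xi^{X_\lambda}_{\alpha})$ with $\{\xi\in\g~|~\alpha([\xi,\cdot])=0\}$ gives both well-definedness and non-degeneracy, $G$-invariance follows from $\Ad_g[\xi_1,\xi_2]=[\Ad_g\xi_1,\Ad_g\xi_2]$ together with the definition \cref{coaddef}, closedness reduces via the identity $[\xi^{X_\lambda},\eta^{X_\lambda}]=-[\xi,\eta]^{X_\lambda}$ to the Jacobi identity, and your final computation is exactly the moment map equation \cref{momentfla} (equivariance of $\mu$ is tautological for the inclusion). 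One small remark: you could shorten the closedness step by proving the moment map identity $\iota_{\xi^{X_\lambda}}\om=d\langle\mu,\xi\rangle$ first and then invoking Cartan's formula, since $0=L_{\xi^{X_\lambda}}\om=\iota_{\xi^{X_\lambda}}d\om+d\iota_{\xi^{X_\lambda}}\om=\iota_{\xi^{X_\lambda}}d\om$ by $G$-invariance, and the fundamental vector fields span every tangent space by transitivity; but your direct Jacobi computation is equally valid.
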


The symplectic form \cref{KKS} is called the 
\emph{Kirillov-Kostant-Souriau} symplectic form over the coadjoint
orbit $X_\lambda$.

Let us write $G_\lambda\subset G$
for the stabilizer of $\lambda\in\t^*\subset\g^*$
by the coadjoint action \cref{coaddef}, so that its Lie algebra
$\g_\lambda:=\textup{Lie}(G_\lambda)$ is given by
\begin{equation}\label{glambdadef}
\g_\lambda=\big\{\xi\in\g~\big|~\frac{d}{dt}\Big|_{t=0}\Ad^*_{e^{t\xi}}\lambda=0\big\}\,.
\end{equation}
The transitive action of $G$ on
$X_\lambda$ then induces a natural identification
\begin{equation}\label{Ol=GGl}
\begin{split}
G/G_\lambda&\xrightarrow{~\sim~} X_{\lambda}\\
[g]&\longmapsto\Ad_g^*\lambda\,.
\end{split}
\end{equation}
Since the coadjoint action \cref{coaddef} corresponds
to the adjoint action via the identification $\g\simeq\g^*$, we see that
the coadjoint action of $T\subset G$ on $\g^*$ restricts to a trivial action
on $\t^*\subset\g^*$, so that formula \cref{glambdadef} implies the
inclusion $T\subset G_\lambda$ as a maximal torus of $G_\lambda$.
Furthermore, it readily follows from the definitions \cref{gCalpha}
of the
roots and \cref{tregdef} of the regular points that
\begin{equation}\label{T=Glambda}
T=G_\lambda\,\text{ if and only if }\,\lambda\in\t^*_{\text{reg}}\,.
\end{equation}
In this case, the coadjoint orbit $X_\lambda$ is called a
\emph{regular coadjoint orbit}, and via the identification
\cref{Ol=GGl}, we see that
regular coadjoint orbits are the orbits of maximal dimension among coadjoint
orbits in $\g^*$.

Let us now assume that $\lambda\in\Gamma^*\subset\t^*$ belongs
to the lattice of integral forms \cref{LT}.
Then following \cite[Th.\,5.7.1]{Kos70}, the induced character
\begin{equation}\label{sigmal}
\begin{split}
\sigma_\lambda:T&\longrightarrow S^1\subset\C^*\\
\xi&\longmapsto e^{2\pi\sqrt{-1}\<\lambda,\xi\>}\,,
\end{split}
\end{equation}
extends uniquely to a character
$\sigma_\lambda:G_\lambda\to S^1\subset\C^*$ of $G_\lambda$,
and the $G$-equivariant line bundle
\begin{equation}\label{Lgamdef}
L_\lambda:=G\times_{\sigma_\lambda}\C\longrightarrow G/G_\lambda
\end{equation}
is naturally endowed with a $G$-equivariant
connection $\nabla^{L_\lambda}$ preserving
the Hermitian metric $h^{L_\lambda}$ induced by the
standard Hermitian metric of $\C$.
Via the identification \cref{Ol=GGl}, we then get a natural
Hermitian line bundle with connection
$(L_\lambda,h^{L_\lambda},\nabla^{L_\lambda})$ over the coadjoint orbit
$X_\lambda$ on which the $G$-action lifts, preserving Hermitian metric
and connection.
The following classical result is due to Kostant, and can be found in
\cite[Prop.\,8.6]{BGV04}.

\begin{prop}\label{Lpreqprop}
{\cite[Th.\,5.7.1,\,Cor.\,1]{Kos70}}
For any $\lambda\in\Gamma^*$, the Hermitian connection
$\nabla^{L_\lambda}$ on the Hermitian line bundle line
$(L_\lambda,h^{L_\lambda})$ over the coadjoint orbit $X_\lambda$
prequantizes the symplectic form \cref{KKS} in the sense
of \cref{preq}, and the moment map
\cref{momentdef} satisfies the Kostant formula \cref{Kostantmufla}
for the action of $G$ on $(L_\lambda,h^{L_\lambda},\nabla^{L_\lambda})$.
\end{prop}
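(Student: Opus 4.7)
The plan is to construct $\nabla^{L_\lambda}$ explicitly as an associated-bundle connection induced from a natural principal $G_\lambda$-connection on $\pi: G \to G/G_\lambda \simeq X_\lambda$, and to reduce both claims to computations at the basepoint $\lambda \in X_\lambda$ corresponding to the identity coset, extending them to all of $X_\lambda$ by $G$-equivariance and transitivity of the coadjoint action.

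First I would use the $\Ad(G_\lambda)$-invariant orthogonal splitting $\g = \g_\lambda \oplus \g_\lambda^\perp$ provided by $\<\cdot,\cdot\>_\g$ to define $\theta \in \Om^1(G,\g_\lambda)$ as the left-invariant $1$-form whose value at $e$ is the projection $\g \to \g_\lambda$; a direct check shows this is a principal $G_\lambda$-connection, and the associated Hermitian connection on $L_\lambda = G \times_{\sigma_\lambda} \C$ is precisely $\nabla^{L_\lambda}$, with local connection 1-form $d\sigma_\lambda|_e \circ \theta$. The tangent character $d\sigma_\lambda|_e : \g_\lambda \to \sqrt{-1}\,\R$ equals $\xi \mapsto 2\pi\sqrt{-1}\<\lambda,\xi\>$ on $\t$ by \cref{sigmal}, and extends to the same formula on $\g_\lambda$ by continuity and $\Ad(T)$-invariance. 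A key observation, used repeatedly, is that via the identification $\g \simeq \g^*$ the element $\lambda$ satisfies $[\lambda,\g_\lambda] = 0$, hence lies in $Z(\g_\lambda) \subset \g_\lambda$ and in particular is orthogonal to $\g_\lambda^\perp$.

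For the prequantization identity \cref{preq} at $\lambda$, I would evaluate the principal curvature $\Omega = d\theta + \tfrac{1}{2}[\theta,\theta]$ on left-invariant horizontal vector fields $X_{\xi_1}, X_{\xi_2}$ with $\xi_i \in \g_\lambda^\perp$: left-invariance of $\theta$ makes $\theta(X_{\xi_i})$ constant on $G$, so the structure equation collapses to $\Omega_e(X_{\xi_1},X_{\xi_2}) = -[\xi_1,\xi_2]_{\g_\lambda}$, where the subscript denotes the $\g_\lambda$-component. Using the orthogonality $\lambda \perp \g_\lambda^\perp$ to replace $[\xi_1,\xi_2]_{\g_\lambda}$ by $[\xi_1,\xi_2]$ inside the pairing with $\lambda$, and noting $(\pi_*\xi_i)|_{[e]} = \xi_i^{X_\lambda}|_\lambda$, one obtains
\begin{equation*}
R^{L_\lambda}_\lambda\bigl(\xi_1^{X_\lambda},\xi_2^{X_\lambda}\bigr)
= -2\pi\sqrt{-1}\,\<\lambda,[\xi_1,\xi_2]\>
= -2\pi\sqrt{-1}\,\om_\lambda\bigl(\xi_1^{X_\lambda},\xi_2^{X_\lambda}\bigr),
\end{equation*}
matching \cref{preq} at $\lambda$; since both $R^{L_\lambda}$ and $\om$ are $G$-invariant (the latter by \cref{KKSprop}), transitivity of the $G$-action propagates the identity to all of $X_\lambda$.

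For the Kostant formula \cref{Kostantmufla}, $G$-equivariance of $\mu^{L_\lambda}$ remarked after \cref{Kostantmudef} reduces the verification to computing $\mu^{L_\lambda}(\lambda)$. Trivializing $L_\lambda$ along the fibre over $\lambda$, the Lie derivative $L_\xi$ lifts $\xi$ while $\nabla^{L_\lambda}_{\xi^{X_\lambda}}$ lifts only the horizontal part $\xi^\perp \in \g_\lambda^\perp$, so their difference acts through $d\sigma_\lambda|_e$ applied to the vertical part $\xi - \xi^\perp \in \g_\lambda$. Invoking $\lambda \perp \g_\lambda^\perp$ one final time,
\begin{equation*}
\tfrac{\sqrt{-1}}{2\pi}\bigl(L_\xi - \nabla^{L_\lambda}_{\xi^{X_\lambda}}\bigr)\Big|_\lambda
= \<\lambda,\xi-\xi^\perp\>\cdot\Id = \<\lambda,\xi\>\cdot\Id,
\end{equation*}
so $\mu^{L_\lambda}(\lambda) = \lambda$, and equivariance identifies $\mu^{L_\lambda}$ with the inclusion \cref{momentdef}. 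The main obstacle will be harmonizing the sign and normalization conventions — the structure equation for $\Omega$, the prequantization convention $R^L = -2\pi\sqrt{-1}\,\om$, the character derivative $d\sigma_\lambda|_e = 2\pi\sqrt{-1}\<\lambda,\cdot\>$, and the KKS formula — so that all factors line up; once these are fixed, the remaining verifications are essentially forced by the universal property of associated bundles.
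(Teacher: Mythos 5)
The paper offers no proof of this proposition: it is quoted verbatim from Kostant (and \cite[Prop.\,8.6]{BGV04}), so there is nothing internal to compare against. Your argument is correct and is in fact the standard proof found in those references: the reductive splitting $\g=\g_\lambda\oplus\g_\lambda^\perp$ gives the canonical invariant principal connection, the structure equation collapses on left-invariant horizontal fields to $-[\xi_1,\xi_2]_{\g_\lambda}$, and both identities are checked at the basepoint and propagated by equivariance and transitivity. The two small points worth tightening are ones you essentially already handle: (i) the extension of $d\sigma_\lambda|_e=2\pi\sqrt{-1}\<\lambda,\cdot\>$ from $\t$ to $\g_\lambda$ is cleanest via the observation that both sides are Lie algebra characters vanishing on $[\g_\lambda,\g_\lambda]\supset\bigoplus_{\alpha\in R_\lambda}(\g_\C)_\alpha\cap\g$, on which $\<\lambda,\cdot\>$ also vanishes since root spaces are orthogonal to $\t$; and (ii) the sign bookkeeping in the Kostant formula, where the factor $\frac{\sqrt{-1}}{2\pi}\cdot 2\pi\sqrt{-1}=-1$ must be absorbed by the minus sign built into the convention $L_\xi s=\frac{d}{dt}\big|_{t=0}e^{-t\xi}s$ of \cref{Lxidef} — you flag this explicitly, and once the conventions are fixed the computation is forced, as you say.
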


Recall now the choice \cref{posroot} of
a set of positive roots $R_+\subset R$.
For any $\lambda\in\t^*\subset\g^*$, recall that
$T\subset G$ is included as a maximal torus in the stabilizer
$G_\lambda\subset G$,
and write $R_\lambda\subset\t^*$ for the set of roots of
$G_\lambda$. By definition \cref{gCalpha} of the set of roots
and by definition \cref{glambdadef} of the Lie algebra
$\g_\lambda=\text{Lie}(G_\lambda)$, one
readily checks that we have $R_\lambda\subset R$, and that
$R_\lambda^+:=R_\lambda\cap R^+$ defines a set of positive roots of
$R_\lambda$.
In particular,in the decomposition
\cref{gCdec} of $\g_\C$, the complexified subalgebra
$\g_\lambda\otimes\C\subset\g_\C$ coincides with
\begin{equation}
\g_\lambda\otimes\C=\t_\C\oplus\bigoplus_{\alpha\in R_\lambda}(\g_\C)_\alpha\,.
\end{equation}
which induces an identification
\begin{equation}
T_\C\,G/G_\lambda=\bigoplus_{\alpha\in R\backslash R_\lambda}
(\g_\C)_\alpha\,.
\end{equation}
Following \cite[p.\,257]{BGV04}, we then get a natural $G$-invariant complex structure on $G/G_\lambda$ defined through the decomposition
\cref{splitc} by
\begin{equation}\label{T10O}
T^{(1,0)}G/G_\lambda=\bigoplus_{\alpha\in R^+\backslash R_\lambda^+}(\g_\C)_\alpha\,.
\end{equation}
Via the identification \cref{Ol=GGl}, this induces a $G$-invariant
complex structure on $X_\lambda$ compatible with
the natural symplectic form of \cref{KKSprop}.
As explained in \cref{BTsubsec},
in the case $\lambda\in\Gamma^*$, we get from \cref{Lpreqprop} a
natural holomorphic structure on the complex line bundle \cref{Lgamdef} 
over $X_\lambda$ induced by $\nabla^{L_\lambda}$.
Note that if $\lambda\in\t^*_{\text{reg}}$,
so that $G_\lambda=T$ by
\cref{T=Glambda}, then for any $\gamma\in\t^*_{\text{reg}}$,
the identification \cref{Ol=GGl} induces an identification
$X_\lambda\simeq X_\gamma$, and this identification is biholomorphic
if and only if $\lambda$ and $\gamma$ belong to the same Weyl chamber.

Let now $\lambda\in\t^*_{\text{reg}}$,
and recall the definition \cref{rhodef} of the half-sum of positive roots.
Then by definition \cref{KX} of the canonical
line bundle $K_{X_\lambda}$ of $X_\lambda$ associated with the complex
structure \cref{T10O} and the fact that $\dim_\C\g_\alpha=1$
for all $\alpha\in R$,
we readily compute that
\begin{equation}\label{cancoad}
(K_{X_\lambda},h^{K_{X_\lambda}})=(L_{-2\rho},h_{-2\rho})
\end{equation} 
as Hermitian holomorphic line bundles over $X_\lambda\simeq G/T$
via the identification \cref{Ol=GGl}.
In particular, in case the half-sum of roots \cref{rhodef} satisfies
$\rho\in\Gamma^*$, the line bundle $L_{-\rho}$ over $G/T$ as in \cref{Lgamdef}
is well defined, and via the identification \cref{Ol=GGl},
we get a canonical
choice of a square root of $K_{X_\lambda}$ over $X_\lambda$ given by
\begin{equation}\label{metacoad}
(K_{X_\lambda}^{1/2},h^{K_{X_\lambda}^{1/2}})=(L_{-\rho},h_{-\rho})\,.
\end{equation}
%

\subsection{Borel-Weil theorem and the orbit method}
\label{repsec}


Let now $\lambda\in\Gamma^*_+\subset\t^*$ belong to the set of
dominant integral weights \cref{Gammadef}. Following
\cref{Lpreqprop}, consider the
holomorphic Hermitian line bundle
$(L_\lambda,h^{L_\lambda})$ with Hermitian connection $\nabla^{L_\lambda}$
prequantizing $(X_\lambda,\om)$ endowed with the compatible
complex structure induced by \cref{T10O}.
The natural action of $G$ induced by \cref{Lgamdef}
is holomorphic,
so that one can consider the unitary representation
of $G$ given by the associated
space of holomorphic sections $H^0(X_\lambda,L_\lambda)$ endowed with the
$L^2$-product \cref{L2}.
We then have the following version due to Kostant
of a
famous result due to Borel-Weil and Bott, in a form that
can be found for instance in \cite[Th.\,3.7]{GS82} and which constitutes
the building block of the so-called \emph{orbit method}
in representation theory.

\begin{theorem}\label{BWth}
{\cite[\S\,4.3]{Akh96}}
For any $\lambda\in\overline{\Gamma}^*_+$,
we have the following identity of unitary representations of $G$,
\begin{equation}
V(\lambda)=(H^0(X_\lambda,L_\lambda),\<\cdot,\cdot\>_{L^2})\,,
\end{equation}
where $V(\lambda)$ is the irreducible representation of $G$ with highest
weight $\lambda$.
\end{theorem}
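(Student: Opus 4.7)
The plan is to realize $H^0(X_\lambda, L_\lambda)$ concretely using the homogeneous-space structure $X_\lambda \simeq G/G_\lambda$, then match it with $V(\lambda)$ by combining the Peter--Weyl theorem with the algebraic characterization of highest-weight vectors. First, I would identify smooth sections of $L_\lambda = G \times_{\sigma_\lambda} \mathbb{C}$ from \cref{Lgamdef} with smooth functions $f : G \to \mathbb{C}$ satisfying the equivariance $f(gh) = \sigma_\lambda(h)^{-1} f(g)$ for all $h \in G_\lambda$, in such a way that the $G$-action \cref{action} becomes left translation. Under this identification, $\cinf(X_\lambda, L_\lambda)$ is the smooth induced representation $\mathrm{Ind}_{G_\lambda}^G \sigma_\lambda$, and the $L^2$-product \cref{L2} agrees, up to a constant, with the $L^2$-structure induced from Haar measure on $G$.

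Next, I would invoke Peter--Weyl together with Frobenius reciprocity to obtain the $G$-equivariant orthogonal Hilbert-space decomposition
\[
L^2(X_\lambda, L_\lambda) \;\simeq\; \widehat{\bigoplus}_{\mu \in \overline{\Gamma}^*_+} V(\mu) \otimes \mathrm{Hom}_{G_\lambda}\!\big(\mathbb{C}_{\sigma_\lambda}, V(\mu)^*\big),
\]
and impose holomorphicity on each isotypic component. Using the description \cref{T10O} of $T^{(1,0)}G/G_\lambda$ and \cref{Lpreqprop} identifying $\nabla^{L_\lambda}$ with the canonical invariant connection, the condition $\overline{\partial}^{L_\lambda} f = 0$ translates, at the level of matrix coefficients, into an annihilation condition on the associated vector in $V(\mu)^*$ by the root spaces $(\mathfrak{g}_\mathbb{C})_\alpha$ for $\alpha \in R^+ \setminus R_\lambda^+$. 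Combined with the $\sigma_\lambda$-equivariance coming from Frobenius reciprocity, which fixes the $T$-weight to be $\lambda$, this selects exactly the highest-weight vectors of $V(\mu)^*$ relative to $R^+$ whose $T$-weight equals the one induced by $\sigma_\lambda$ on the fibre over $[e] \in G/G_\lambda$. By \cref{highestwghtth} applied to $V(\mu)^*$, such a vector exists and is unique up to scalar precisely when $\mu = \lambda$, yielding $H^0(X_\lambda, L_\lambda) \simeq V(\lambda)$ as unitary representations of $G$.

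The main obstacle will be the careful translation of the geometric holomorphicity condition $\overline{\partial}^{L_\lambda} f = 0$ into the algebraic annihilation condition on $V(\mu)^*$. Concretely, one must trivialize $L_\lambda$ near $[e] \in G/G_\lambda$ using the section provided by \cref{Lgamdef}, compute the $(0,1)$-part of $\nabla^{L_\lambda}$ in this frame via the Kostant formula \cref{Kostantmufla} applied to \cref{Lpreqprop}, and check that annihilation along the $T^{(0,1)}$-directions described by \cref{T10O} corresponds exactly to annihilation by the appropriate root vectors through the right regular representation on matrix coefficients. Once this is pinned down, the remaining steps---Peter--Weyl, Frobenius reciprocity, and the identification of highest-weight vectors---are essentially automatic. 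An alternative route that bypasses some of this bookkeeping is to pass to the complexification $G_\mathbb{C}$ and a parabolic $P \subset G_\mathbb{C}$ with $P \cap G = G_\lambda$, so that $X_\lambda \simeq G_\mathbb{C}/P$ as a complex manifold and $L_\lambda$ is the holomorphic $G_\mathbb{C}$-equivariant line bundle attached to the extension of $\sigma_\lambda$ to a character $\chi_\lambda : P \to \mathbb{C}^*$; an explicit nonzero holomorphic section of highest weight $\lambda$ is then produced as a matrix coefficient $g \mapsto \langle v_\lambda^*, g^{-1} v_\lambda \rangle$ of $V(\lambda)$, and irreducibility follows from Schur's lemma together with a Weyl dimension count.
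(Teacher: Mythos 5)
The paper does not prove this statement: it is quoted verbatim from the literature (the citation to Akhiezer, \S 4.3), so there is no in-paper argument to compare against. Your sketch is the standard classical proof of Borel--Weil, and it is correct in outline: identify $\cinf(X_\lambda,L_\lambda)$ with $\sigma_\lambda$-equivariant functions on $G$ so that \cref{action} becomes left translation, decompose via Peter--Weyl and Frobenius reciprocity, and observe that the $\bar\partial$-condition cuts out, in each isotypic block $V(\mu)\otimes\Hom_{G_\lambda}(\C_{\sigma_\lambda},V(\mu)^*)$, exactly the extremal vectors of the correct $T$-weight, which survive precisely for $\mu=\lambda$ and then with multiplicity one. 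The unitarity claim is then automatic from irreducibility of $V(\lambda)$ and $G$-invariance of the Liouville measure and of $h^{L_\lambda}$, since an invariant inner product on an irreducible representation is unique up to scale.

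The only place where your write-up is not yet a proof is the one you flag yourself: the translation of $\bar\partial^{L_\lambda}f=0$ into an algebraic annihilation condition. Note that by \cref{T10O} the $(0,1)$-directions are spanned by $(\g_\C)_{-\alpha}$ with $\alpha\in R^+\setminus R_\lambda^+$, and whether the resulting condition reads as annihilation by positive or by negative root vectors on the $V(\mu)^*$ factor depends on whether the right regular representation or its contragredient acts there; getting this sign wrong lands you on $V(\lambda)^*\simeq V(-w_0\lambda)$ rather than $V(\lambda)$. With the paper's conventions (\cref{sigmal}, \cref{Lgamdef}, \cref{action}) the bookkeeping does close up, but it must be carried out explicitly, e.g.\ by computing $\nabla^{L_\lambda}$ in the tautological frame near $[e]$ as you propose. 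Your alternative route via $G_\C/P$ and the explicit matrix-coefficient section is equally standard and avoids some of this, at the cost of importing the complexification and the Weyl dimension formula.
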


Note that if $\lambda\in\Gamma_+^*$ belongs to the set of
\emph{regular dominant weights}
\begin{equation}\label{Gamma0def}
\Gamma^*_+:=\Gamma^*\cap\t_+^*\,,
\end{equation}
so that we have $X_\lambda\simeq G/T$ by \cref{T=Glambda},
then for any other $\gamma\in\Gamma_+^*$, as it belongs to the same
Weyl chamber \cref{Weylchamber}, the $G$-equivariant identification
$X_\lambda\simeq X_\gamma$ induced by \cref{Ol=GGl} is biholomorphic,
so that \cref{Lgamdef} defines a holomorphic
Hermitian line bundle with connection
$(L_\gamma,h^{L_\gamma},\nabla^{L_\gamma})$ over $(X_\lambda,\om)$, and
by \cref{BWth} we get
\begin{equation}\label{BWcor}
V(\gamma)=(H^0(X_\lambda,L_\lambda),\<\cdot,\cdot\>_{L^2})\,,
\end{equation}
where $V(\gamma)$ is the irreducible representation of $G$ with highest
weight $\gamma\in\Gamma^*_+$. Note also from \cref{sigmal} that for any
$\gamma_1,\,\gamma_2\in\Gamma^*_+$, we have
$L_{\gamma_1+\gamma_2}=L_{\gamma_1}\otimes L_{\gamma_2}$.
Furthermore, for any $\lambda\in\overline{\Gamma}^*_+$ and $p\in\N$,
we get from the
same argument the following consequence of \cref{BWth},
\begin{equation}\label{BWcor2}
V(p\lambda)=(H^0(X_\lambda,L_\lambda^p),\<\cdot,\cdot\>_{L^2})\,.
\end{equation}

Following now \cite[\S\,17.2]{Hum78}, let us consider
the \emph{universal enveloping algebra} $U(\g)$ of $\g$,
with associated Lie algebra morphism
\begin{equation}\label{idef}
j:\g\longrightarrow U(\g)\,,
\end{equation}
which
is characterized by the universal property that for any vector space $V$ and
any Lie algebra morphism
$\varphi:\g\to\End(V)$, there exists a unique algebra morphism
\begin{equation}\label{Phi}
\Phi:U(\g)\longrightarrow\End(V)
\end{equation}
such that $\varphi=\Phi\circ j$. In particular,
for any representation $V$ of $G$, one gets an induced representation
of $U(\g)$ on $V$ by taking $\varphi:\g\to\End(V)$ to be the associated
infinitesimal action.
Under the identification \cref{Rg=Sg} of the ring of polynomials $\R[\g^*]$
with real coefficients over $\g^*$ with the symmetric algebra of $\g$, we can
then state the celebrated \emph{Poincaré-Birkoff-Witt theorem}.

\begin{theorem}\label{PBWprop}
{\cite[Prop.\,2.4.10]{Dix96}}
The \emph{symmetrization map}
\begin{equation}\label{sym}
\begin{split}
\text{sym}:\R[\g^*]&\xrightarrow{~\sim~} U(\g)\\
\xi_1\cdots \xi_k &\longmapsto\frac{1}{k!}
\sum_{\sigma\in\mathfrak{S}_k}
j(\xi_{\sigma(1)})\cdots\,j(\xi_{\sigma(k)})\,,
\end{split}
\end{equation}
is a bijective $\g$-equivariant linear map for the natural adjoint
action on both spaces.
\end{theorem}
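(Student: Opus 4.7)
The plan is to deduce this from the classical form of the Poincaré-Birkhoff-Witt theorem identifying $S(\g)$ with the associated graded algebra of $U(\g)$. Via the ring isomorphism $\R[\g^*] \simeq S(\g)$ from \cref{Rg=Sg}, I would first check that $\sym$ is well-defined as a linear map $S(\g) \to U(\g)$, which is immediate since its defining formula is manifestly symmetric in its arguments $\xi_1, \ldots, \xi_k \in \g$ and therefore descends to the quotient defining the symmetric algebra.

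For bijectivity, I would work with the standard filtration $\{U_k(\g)\}_{k \in \N}$ of $U(\g)$, where $U_k(\g)$ is spanned by products of at most $k$ elements of $j(\g)$. The defining relations of the universal enveloping algebra yield $[j(\xi), j(\eta)] = j([\xi, \eta]) \in U_1(\g)$, so commutators lower the filtration degree and the associated graded algebra $\mathrm{gr}(U(\g))$ is commutative. This induces a surjective graded algebra morphism $S(\g) \twoheadrightarrow \mathrm{gr}(U(\g))$, which is an isomorphism by the classical PBW theorem. Since the symmetrization map is filtered and its associated graded coincides with this graded isomorphism, a straightforward induction on $k$, using the short exact sequences $0 \to U_{k-1}(\g) \to U_k(\g) \to \mathrm{gr}_k(U(\g)) \to 0$, then shows that $\sym$ is a linear bijection.

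For $\g$-equivariance, the adjoint action of $\xi \in \g$ on $U(\g)$ is given by the commutator $u \mapsto [j(\xi), u]$, while the adjoint action on $S(\g)$ is the degree-preserving derivation sending $\xi_1 \cdots \xi_k$ to $\sum_{i=1}^k \xi_1 \cdots [\xi, \xi_i] \cdots \xi_k$. Applying the Leibniz rule for the commutator to each summand in the defining expression of $\sym(\xi_1 \cdots \xi_k)$ and using $[j(\xi), j(\xi_i)] = j([\xi, \xi_i])$, a direct computation yields the identity $[j(\xi), \sym(\xi_1 \cdots \xi_k)] = \sym(\xi \cdot (\xi_1 \cdots \xi_k))$. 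The main bookkeeping step is that, after expanding, the double sum over $\sigma \in \mathfrak{S}_k$ and insertion positions $i$ reorganizes exactly as the symmetrized sum applied to the output of the derivation, which works because both sums are invariant under permutations of the untouched arguments. I expect the appeal to the graded PBW theorem and this equivariance calculation to be the only substantive points; everything else reduces to unwinding definitions.
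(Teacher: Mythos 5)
Your argument is correct and follows the standard route that the paper itself does not spell out but delegates to \cite[Prop.\,2.4.10]{Dix96}: well-definedness by symmetry of the defining formula, bijectivity by induction on the filtration using that the associated graded of $\mathrm{sym}$ is the canonical isomorphism $S(\g)\simeq\mathrm{gr}(U(\g))$ of the graded PBW theorem, and equivariance by the Leibniz rule for $u\mapsto[j(\xi),u]$ together with the reindexing of the double sum over $\sigma$ and insertion positions. Nothing is missing; quoting the graded form of PBW as input is legitimate since the symmetrization statement is a standard consequence of it, and your bookkeeping in the equivariance computation is sound.
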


Note that \cref{PBWprop} implies in particular that the map
$j:\g\to U(\g)$ of \cref{idef}
is injective, and we will call it as the \emph{inclusion map} of $\g$ into
$U(\g)$.
\cref{PBWprop} also induces an adjoint action of $G$
on $U(\g)$ by exponentiating the adjoint action of $\g$ induced by the
inclusion map \cref{idef}, and for which the symmetrization map \cref{sym}
is $G$-equivariant.
Note also that the symmetrization map \cref{sym} is not an
isomorphism of algebras in general, as the first space is commutative, while
the second is not commutative in general since the inclusion map
\cref{idef} is a Lie algebra
morphism. Finally, in the setting of \cref{KSsec}, if one
considers the representation of $G$ given by $V=H^0(X,L^p\otimes K)$, then
by \cref{Qpdef} and definition \cref{sym} of the symmetrization map, for any
$P\in\R[\g^*]$ one gets
\begin{equation}\label{Qp=Phi}
Q_p(P)=\Phi(\sym(P))\,,
\end{equation}
where $\Phi:U(\g)\to\End(H^0(X,L^p\otimes K))$ is the induced action
\cref{Phi} of $U(\g)$ on $H^0(X,L^p\otimes K)$.


Let us write $Z[U(\g)]\subset U(\g)$ for the center of the universal
envelopping algebra. Following for instance \cite[\S\,6.1]{Hum78},
the usual Schur lemma readily implies that $Z[U(\g)]$
acts by scalar multiplication on any irreducible
representation of $G$. Recall on the other hand the action on $\t^*$ of
the Weyl group $W$ defined by \cref{Wdef}, and denote by 
$\R[\t^*]^W$ the algebra of $W$-invariant polynomials with real coefficients
over $\t^*$. We then have the following
result due to Harish-Chandra, which can be found in
\cite[Prop.\,7.4.4,\,Th.\,7.4.5]{Dix96}.


\begin{theorem}\label{HCth}
{\cite{H51}}
There is an isomorphism
\begin{equation}\label{HCdef}
\gamma:Z[U(\g)]\xrightarrow{~\sim~} \R[\t^*]^W\,,
\end{equation}
characterised by the fact that for any $z\in Z[U(\g)]$ and
$\lambda\in\overline{\Gamma}_+^*$, we have
\begin{equation}\label{HCfla}
\Phi(z).v =\gamma(z)(2\pi\sqrt{-1}(\lambda+\rho))\,v\,,
\end{equation}
for all $v\in V(\lambda)$, where 
$\Phi:U(\g)\to\End(V(\lambda))$ is the induced action
\cref{Phi} of $U(\g)$ on the irreducible representation $V(\lambda)$
of $G$ with
highest weight $\lambda$, and where $\rho\in\t^*$ denotes
the half-sum of roots defined by \cref{rhodef}.
\end{theorem}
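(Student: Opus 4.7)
The plan is to construct $\gamma$ via the Harish-Chandra projection associated with the root space decomposition \cref{gCdec}, then to verify the characterization \cref{HCfla} by a standard highest-weight-vector computation, with $W$-invariance and bijectivity handled as separate steps.

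Writing $\mathfrak{n}^+:=\bigoplus_{\alpha\in R_+}(\g_\C)_\alpha$ and $\mathfrak{n}^-:=\bigoplus_{\alpha\in R_+}(\g_\C)_{-\alpha}$, the decomposition \cref{gCdec} yields the triangular decomposition $\g_\C=\mathfrak{n}^-\oplus\t_\C\oplus\mathfrak{n}^+$. The Poincaré-Birkhoff-Witt theorem, applied over $\C$, induces a direct sum decomposition
\begin{equation*}
U(\g_\C)=U(\t_\C)\oplus\bigl(\mathfrak{n}^- U(\g_\C)+U(\g_\C)\mathfrak{n}^+\bigr)\,,
\end{equation*}
and I let $\pi:U(\g_\C)\to U(\t_\C)\simeq\C[\t_\C^*]$ denote the projection onto the first factor, identifying $U(\t_\C)$ with polynomials on $\t_\C^*$ via \cref{Rg=Sg}.

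For $\lambda\in\overline{\Gamma}_+^*$, let $v_\lambda\in V(\lambda)$ be a highest weight vector in the sense of \cref{highestwghtdef}, characterized up to scalar by $\mathfrak{n}^+\cdot v_\lambda=0$ and $\xi\cdot v_\lambda=2\pi\sqrt{-1}\langle\lambda,\xi\rangle\, v_\lambda$ for all $\xi\in\t$. For any $z\in Z[U(\g)]$, writing $z=\pi(z)+z_++z_-$ with $z_+\in U(\g_\C)\mathfrak{n}^+$ and $z_-\in\mathfrak{n}^- U(\g_\C)$, the term $z_+$ annihilates $v_\lambda$, while centrality of $z$ combined with the fact that $z_-\cdot v_\lambda$ lies in a sum of weight spaces strictly lower than $\lambda$ forces this contribution to vanish as well. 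Hence
\begin{equation*}
z\cdot v_\lambda=\pi(z)\bigl(2\pi\sqrt{-1}\lambda\bigr)\,v_\lambda\,,
\end{equation*}
and by Schur's lemma $z$ acts as the scalar $\pi(z)(2\pi\sqrt{-1}\lambda)$ on all of $V(\lambda)$. Applying the same scalar identification to a product $z_1 z_2$ shows that the restriction $\pi|_{Z[U(\g)]}$ is in fact a ring homomorphism.

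Define $\gamma(z)\in\C[\t_\C^*]$ by $\gamma(z)(\mu):=\pi(z)\bigl(\mu-2\pi\sqrt{-1}\rho\bigr)$, so that formula \cref{HCfla} holds by construction. The $W$-invariance of $\gamma(z)$ is the central structural step: it suffices to check invariance under the simple reflections generating $W$, which reduces via an $\mathfrak{sl}_2$-triple argument associated with each simple root $\alpha$ to the statement that the Verma modules $M(\lambda)$ and $M(s_\alpha\cdot\lambda)$ admit the same central character, where $s_\alpha\cdot\lambda=s_\alpha(\lambda+\rho)-\rho$ denotes the shifted action. This identity is precisely the origin of the $\rho$-shift, and follows from the existence of a nonzero Verma homomorphism whenever $\langle\lambda+\rho,\alpha^\vee\rangle\in\N$, a condition met on a Zariski-dense set of weights which determines the polynomial $\gamma(z)$ uniquely. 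Reality of the coefficients of $\gamma(z)$ follows because unitarity of each $V(\lambda)$ forces the scalar $\gamma(z)(2\pi\sqrt{-1}(\lambda+\rho))$ to be real. Finally, bijectivity reduces to Chevalley's restriction theorem: the symmetrization map of \cref{PBWprop} restricts to a linear isomorphism $S(\g)^G\xrightarrow{~\sim~}Z[U(\g)]$, while restriction of polynomials from $\g^*$ to $\t^*$ gives an algebra isomorphism $S(\g)^G\xrightarrow{~\sim~}S(\t)^W=\R[\t^*]^W$. A filtration argument comparing principal symbols shows that $\gamma$ coincides with the composition of these two isomorphisms modulo terms of strictly lower degree coming from the $\rho$-shift, so $\gamma$ is itself a bijection and hence a ring isomorphism by the preceding paragraph. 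I expect the main technical obstacle to lie in the verification of $W$-invariance through the Verma-module central-character identification, since this is where the $\rho$-shift enters in an essential, non-obvious way and where the full content of formula \cref{HCfla} is pinned down.
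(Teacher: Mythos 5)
The paper does not actually prove this statement: it is quoted as a classical theorem, with the proof delegated to Harish-Chandra and to \cite[Prop.\,7.4.4, Th.\,7.4.5]{Dix96}, so there is no internal argument to compare against. Your route --- triangular decomposition from \cref{gCdec}, the Harish-Chandra projection $\pi$, the highest-weight computation $\Phi(z)v_\lambda=\pi(z)(2\pi\sqrt{-1}\lambda)v_\lambda$, the $\rho$-shift, $W$-invariance via central characters of Verma modules, and bijectivity via Chevalley restriction plus a filtration argument --- is precisely the standard proof in that reference, and those steps are sound as sketched.

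There is, however, one step that is genuinely wrong: the reality claim. You assert that unitarity of $V(\lambda)$ forces the scalar $\gamma(z)(2\pi\sqrt{-1}(\lambda+\rho))$ to be real. It does not: take a nonzero $\xi_0\in\t\cap Z(\g)$ (for $\g=\u(n)$, a multiple of the identity); then $j(\xi_0)\in Z[U(\g)]$ acts on $V(\lambda)$ by the purely imaginary scalar $2\pi\sqrt{-1}\langle\lambda,\xi_0\rangle$. In general a central element acts by the value of a real polynomial at the \emph{imaginary} point $2\pi\sqrt{-1}(\lambda+\rho)$, which is complex; and even if all these values were real, realness of a polynomial on the totally imaginary set $2\pi\sqrt{-1}(\overline{\Gamma}^*_++\rho)$ would not give real coefficients. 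What is actually needed is $\gamma(z)(\overline{\mu})=\overline{\gamma(z)(\mu)}$, and since points $\mu\in 2\pi\sqrt{-1}\,\t^*$ satisfy $\overline{\mu}=-\mu$, this amounts to $\overline{\chi_\lambda(z)}=\gamma(z)(-2\pi\sqrt{-1}(\lambda+\rho))$. Unitarity does give this, but only through the conjugate (dual) representation: $\overline{\chi_\lambda(z)}=\chi_{-w_0\lambda}(z)=\gamma(z)(2\pi\sqrt{-1}(-w_0\lambda+\rho))$, and since $-w_0\rho=\rho$ and $\gamma(z)$ is $W$-invariant this equals $\gamma(z)(-2\pi\sqrt{-1}(\lambda+\rho))$; Zariski density of the shifted dominant weights then yields real coefficients. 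So reality is a consequence of the $W$-invariance you established together with the behaviour of central characters under duality, not a pointwise consequence of unitarity; as written, that step of your argument fails and should be replaced by the above.
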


Note that the $2\pi\sqrt{-1}$ comes from our convention \cref{WVdef} for
the weights, and that we are using the fact
that a polynomial over a real vector space
extends to its complexification in the obvious way.
Note also that, since $\frac{1}{p}\Gamma^*_+$
becomes dense inside the open cone $\overline{\t^*_+}\subset\t^*$
as $p\to+\infty$ by \cref{LT} and
\cref{Gammadef}, one readily infers
that a polynomial in $\R[\t^*]$ is uniquely determined by its values
on $\overline{\Gamma}_+^*\subset\t^*$, so that
formula \cref{HCfla} in fact characterizes
$\gamma(z)\in\R[\t^*]^W$ for all $z\in Z[U(\g)]$.

%

%

Since the inclusion map \cref{idef} is injective
and its image generates $U(\g)$ by the Poincaré-Birkhoff-Witt
\cref{PBWprop}, the center $Z[U(\g)]\subset U(\g)$
coincides with the subalgebra of $U(\g)$ of $\g$-invariant
elements by the adjoint action, and
since the symmetrization map \cref{sym} is $\g$-equivariant,
the
Poincaré-Birkhoff-Witt \cref{PBWprop} implies that it
restricts to an isomorphism of vector spaces
\begin{equation}\label{symcenter}
\sym:\R[\g^*]^G \xrightarrow{~\sim~} Z[U(\g)]\,,
\end{equation}
where $\R[\g^*]^G\subset\R[\g^*]$ denotes
the space of real polynomials over $\g^*$ which are
invariant with respect to the coadjoint action \cref{coaddef} of $G$.
Together with the Borel-Weil \cref{BWth},
\cref{L=Tp} then has the following important consequence,
which is typical of a semiclassical result in the orbit method.

\begin{cor}\label{corL=Tp}
For any homogeneous $G$-invariant polynomial $P\in \R[\g^*]^{G}$
of degree $j\in\N$ and any $\lambda\in\t^*$,
we have the following asymptotic estimate as $p\to+\infty$,
\begin{equation}\label{corL=TPfla}
\frac{1}{p^j}\gamma(\sym(P))(p\lambda)
=P(\lambda)+O(p^{-1})
\,,
\end{equation}
so that $\gamma(\sym(P))\in\R[\t^*]^{W}$ has degree $j$ and its
homogeneous part of highest degree coincide with the restriction of
$P\in\R[\g^*]^G$ to $\t^*\subset\g^*$.
\end{cor}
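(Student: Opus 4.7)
The plan is to combine \cref{L=Tp} with the Borel--Weil theorem \cref{BWth} and the Harish-Chandra isomorphism \cref{HCth}, making essential use of the observation that any $G$-invariant polynomial $P\in\R[\g^*]^G$ restricts to the constant $P(\lambda)$ on the coadjoint orbit $X_\lambda$.

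Fix $\lambda\in\overline{\Gamma}^*_+$ and apply the framework of \cref{BTsec,KSsec} to $(X_\lambda,\om)$ prequantized by $(L_\lambda,h^{L_\lambda},\nabla^{L_\lambda})$ as in \cref{Lpreqprop}, with trivial auxiliary bundle $K=\C$. Because $P$ is $G$-invariant and $X_\lambda$ is a single orbit, $\mu^*P$ is the constant function $P(\lambda)$ along $X_\lambda$, so $T_p(\mu^*P)=P(\lambda)\,\Id$ on $H^0(X_\lambda,L_\lambda^p)$. Combining the identification $Q_p(P)=\Phi(\sym(P))$ from \cref{Qp=Phi} with the Borel--Weil identification $H^0(X_\lambda,L_\lambda^p)=V(p\lambda)$ from \cref{BWcor2}, \cref{L=Tp} yields the operator-norm asymptotic $\frac{1}{(2\pi\sqrt{-1}p)^j}\Phi(\sym(P))=P(\lambda)\,\Id+O(p^{-1})$ on $V(p\lambda)$. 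Since $\sym(P)\in Z[U(\g)]$ by \cref{symcenter}, the Harish-Chandra \cref{HCth} asserts that $\Phi(\sym(P))$ acts on $V(p\lambda)$ as the scalar $\gamma(\sym(P))\bigl(2\pi\sqrt{-1}(p\lambda+\rho)\bigr)$, and comparing the two expressions gives
\begin{equation*}
\frac{1}{(2\pi\sqrt{-1}p)^j}\,\gamma(\sym(P))\bigl(2\pi\sqrt{-1}(p\lambda+\rho)\bigr)\;=\;P(\lambda)+O(p^{-1})\,,
\end{equation*}
valid for every $\lambda\in\overline{\Gamma}^*_+$ and every $p\in\N^*$.

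To finish I would upgrade this asymptotic on integer dominant weights to a polynomial identity on $\t^*$. Set $Q:=\gamma(\sym(P))$ and decompose $Q=\sum_k Q_k$ into homogeneous components. For each fixed $\lambda\in\overline{\Gamma}^*_+$, the map $p\mapsto Q(2\pi\sqrt{-1}(p\lambda+\rho))$ is a polynomial in $p$ whose top-degree coefficient is $(2\pi\sqrt{-1})^{\deg Q}Q_{\deg Q}(\lambda)$; the asymptotic above constrains this polynomial to grow like $p^j$ at most, so if $\deg Q>j$ then $Q_{\deg Q}$ must vanish on $\overline{\Gamma}^*_+$. Since polynomials on $\t^*$ are uniquely determined by their values on $\overline{\Gamma}^*_+$ (as noted in the remark following \cref{HCth}), this forces $Q_{\deg Q}=0$ as a polynomial, contradicting the definition of $\deg Q$; hence $\deg Q\le j$. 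The coefficient of $p^j$ in $Q(2\pi\sqrt{-1}(p\lambda+\rho))$ is then exactly $(2\pi\sqrt{-1})^j Q_j(\lambda)$, and matching with the asymptotic gives $Q_j(\lambda)=P(\lambda)$ on $\overline{\Gamma}^*_+$; the same density argument promotes this to the polynomial identity $Q_j=P|_{\t^*}$ on $\t^*$. Expanding $Q(p\lambda)=\sum_{k=0}^j p^k Q_k(\lambda)$ then gives $\frac{1}{p^j}Q(p\lambda)=P(\lambda)+O(p^{-1})$ for every $\lambda\in\t^*$, which is exactly \cref{corL=TPfla}. The main subtle point is this polynomial-extraction step, where one must carefully isolate the highest-order behavior in $p$ and invoke density of $\frac{1}{p}\overline{\Gamma}^*_+$ in $\overline{\t^*_+}$ to extend polynomial identities from $\overline{\Gamma}^*_+$ to all of $\t^*$.
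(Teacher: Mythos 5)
Your proof is correct and follows essentially the same route as the paper: reduce to $T_p(\mu^*P)=P(\lambda)\Id$ on the coadjoint orbit, invoke \cref{L=Tp}, \cref{Qp=Phi}, \cref{BWcor2} and the Harish-Chandra \cref{HCth}, then compare the resulting asymptotic with the polynomial expansion in $p$ and use that polynomials on $\t^*$ are determined by their values on $\overline{\Gamma}^*_+$. Your final polynomial-extraction paragraph simply spells out in detail the comparison step the paper leaves implicit.
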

\begin{proof}
Let $\lambda\in\overline{\Gamma}^*_+$,
and recall from \cref{KKSprop} that the moment map
$\mu:X_\lambda\to\g^*$ associated with the coadjoint action of $G$ on
the coadjoint orbit $(X_\lambda,\om)$ is given by the inclusion,
so that since the restriction of a homogeneous $G$-invariant polynomial
$P\in \R[\g^*]^{G}$ to a coadjoint orbit is constant, we have
$\mu^*P\equiv P(\lambda)$. Now \cref{Lpreqprop} states that
the Hermitian line bundle with connection
$(L_\lambda,h^{L_\lambda},\nabla^{L_\lambda})$
prequantizes $(X_\lambda,\om)$, and \cref{BTdef}
for $X=X_\lambda$, $L=L_\lambda$
and $K=\C$ implies that $T_p(\mu^*P)=P(\lambda)\Id$ for all $p\in\N$,
so that \cref{L=Tp} gives the following asymptotic as $p\to+\infty$,
for any sequence $\{s_p\in H^0(X,L^p)\}_{p\in\N}$ with $\|s_p\|_{L^2}=1$,
\begin{equation}\label{corL=TPfla1}
\frac{1}{(2\pi\sqrt{-1} p)^k}Q_p(P)s_p=P(\lambda)s_p+O(p^{-1})\,.
\end{equation}
On the other hand, by \cref{Qp=Phi},
the operator
$Q_p(P)\in\End(H^0(X,L^p))$
correspond to the action of $\sym(P)\in U(\g)$ on the irreducible representation
of $G$ withhighest weight $p\lambda$ via \cref{BWcor2}, so that
by \cref{HCth}, for all $p\in\N$ we get
\begin{equation}\label{corL=TPfla2}
Q_p(P)s_p=
\gamma(\sym(P))(2\pi\sqrt{-1}(p\lambda+\rho))s_p\,.
\end{equation}
As $\gamma(\sym(P))\in\R[\t^*]^W$ is a polynomial, formula
\cref{corL=TPfla} then follows from comparing
the asymptotic expansion
\cref{corL=TPfla1} with the expansion in powers of $p\in\N$ of
the right-hand side of formula \cref{corL=TPfla2}.
Since a polynomial in $\R[\t^*]$
is uniquely determined by its values
on $\overline{\Gamma}_+^*\subset\t^*$ as in \cref{HCth}, this concludes the proof.

\end{proof}

Note that \cref{corL=Tp} can also be seen
as a consequence of the explicit description of the
Harish-Chandra isomorphism \cref{HCdef} given for instance
in \cite[\S\,9.4]{NV21}.
%


\section{Quantization of the projective space}
\label{quantprojsec}

In this section, we fix $n\in\N^*$ and consider first the case of irreducible
representations of $G=U(n)$
with highest weight $\lambda=(p,0,\cdots,0)\in\overline{\Gamma}^*_+$
for all $p\in\N$ in the sense
of \cref{highestwghtdef} and under the identification \cref{GammaUndef}.
In particular, we establish \cref{toricth} using the
tools of \cref{isosec}.

\subsection{Projective space as a coadjoint orbit}
\label{projsec}

Write $\<\cdot,\cdot\>$ for the natural Hermitian product of $\C^n$
and $|\cdot|$ for the associated norm.
Under the identification \cref{GammaUndef}, let
$\lambda:=(1,0,\cdots,0)\in\overline{\Gamma}^*_+\subset\Z^n$, and note
that it corresponds in the identification \cref{Herm=un} to the
Hermitian matrix
$\Pi_{e_1}\in\Herm(\C^n)$
of orthogonal projection on the first vector
$e_1\in\C^n$ in the canonical basis of
$\C^n$. Since any vector $z=(z_1,\cdots,z_n)\in\C^n$ of norm $1$ is of the
form $z=ge_1$ for some $g\in U(n)$,
we see that under the identification \cref{Herm=un},
the coadjoint orbit $X_\lambda$ is given by
\begin{equation}\label{XPie1}
X_\lambda=\{\Pi_{z}\in\Herm(\C^n)~|~|z|=1\}\,.
\end{equation}
We thus get a natural $U(n)$-equivariant identification of $X_\lambda$
with the
\emph{complex projective space} $\CP^{n-1}$ of complex lines inside $\C^n$
via the map
\begin{equation}\label{Xl=CPn}
\begin{split}
X_\lambda&\xrightarrow{~\sim~}\CP^{n-1}\\
\Pi_z&\longmapsto[z]\,.
\end{split}
\end{equation}
Using the explicit description of the decomposition \cref{gCdec}
given for instance in \cite[Chap.V,\,\S\,6]{BD85}, one readily checks that
this identification
is biholomorphic for the complex structure induced by \cref{T10O} on
$X_\lambda$. Furthermore, the natural symplectic form \cref{KKS} on
$X_\lambda$ is sent to the canonical Fubini-Study symplectic form of
$\CP^{n-1}$ of volume $1$, and the prequantizing line bundle
$(L_\lambda,h^{L_\lambda},\nabla^{L_\lambda})$ of \cref{Lpreqprop}
corresponds to the dual of the tautological line bundle
with associated Fubini-Study Hermitian metric and connection.
From now on, we will set $X:=X_\lambda$ and $L:=L_\lambda$,
and use freely the identification \cref{Xl=CPn}.

Let now $T_0\subset U(n)$ be the $(n-1)$-dimensional
torus of diagonal matrices with highest-left coefficient equal to $1$,
and write $\t_0:=\text{Lie}(T_0)$ for its Lie algebra.
The associated moment map $\mu_0:X\to\mathfrak{t}^*_0$
by the Kostant
formula \cref{Kostantmufla} satisfies $\mu_0=\mu\circ\pi$,
where $\mu:X\hookrightarrow\u(n)^*$ is the standard moment map
\cref{momentdef} given by the inclusion and $\pi:\u(n)^*\to\mathfrak{t}^*_0$
is the projection induced by the natural inclusion $\t_0\subset\u(n)$.
Consider the identification $\t_0^*\simeq\R^{n-1}$
induced by the identification \cref{t*=Rn}. Using the description
\cref{XPie1} of $X:=X_\lambda$, one
readily checks that
\begin{equation}\label{mu0def}
\begin{split}
\mu_0:X&\longrightarrow\R^{n-1}\\
\Pi_z&\longmapsto(|z_2|^2,|z_3|^2,\cdots,|z_{n}|^2)\,.
\end{split}
\end{equation}
In particular, as $\sum_{j=2}^{n}|z_j|^2=1-|z_1|^2$
for all $z\in\C^n$ with $|z|=1$, its image over
$X$ as in \cref{XPie1} is given by the simplex
%
%

\begin{equation}\label{Deltaproj}
\Delta:=\big\{(\nu_2,\cdots,\nu_{n})\in\R^{n-1}
~\big|~\sum_{j=2}^{n}\nu_j\leq 1\text{ and }
\nu_k\geq 0\,,\text{ for all }2\leq k\leq n\big\}\,.
\end{equation}
We write
\begin{equation}\label{Lambdabprojdef}
\iota_\nu:\Lambda_\nu:=\mu^{-1}(\nu)\longhookrightarrow X\,,
\end{equation}
for the fibre of \cref{mu0def} over any $\nu\in\Delta$.
As the torus $T_0\subset U(n)$ acts on $X$ by scalar multiplication
on each component of $z\in\C^n$ in the identification \cref{XPie1},
we see from \cref{mu0def} that $T_0$ acts transitively on
the fibres of the moment map $\mu_0:X\to\Delta$,
and freely over the interior $\Delta^0\subset\Delta$.
For any $\nu\in\Delta$, we write
\begin{equation}\label{Tnudef}
T_\nu:=\{g\in T_0~|~g.x=x\text{ for all }x\in\Lambda_\nu\}\,,
\end{equation}
which is a closed subgroup of $T_0$.
By completing a basis of the integral lattice of its
Lie subalgebra $\t_\nu:=\Lie(T_\nu)\subset\t_0$ into an
basis of the integral lattice of $\t_0$, we get a splitting
\begin{equation}\label{splitToTnu}
T_0=T_\nu\times(T_0/T_\nu)\,,
\end{equation}
inducing a freely transitive action of $T_0/T_\nu$ on $\Lambda_\nu$.
Write $\t_0=\t_\nu\oplus(\t_0/\t_\nu)$ for the induced splitting of its
Lie algebra.

Recalling \cref{BS} of a Bohr-Sommerfeld submanifold in $(X,\om)$,
we then have the following basic fundamental result.

\begin{prop}\label{BStoricprop}
The fibre $\Lambda_\nu\subset X$ over $\nu\in\Delta$
of the moment map \cref{mu0def}
for the Hamiltonian action of $T_0\subset U(n)$ on $(X,\om)$
satisfy the Bohr-Sommerfeld condition at level
$p\in\N$ if and only if
\begin{equation}\label{BSprojfla}
\nu\in\Delta\cap\,\Big(\frac{1}{p}\,\Z^{n-1}\Big)\,.
\end{equation}
\end{prop}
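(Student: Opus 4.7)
The plan is to reduce the Bohr-Sommerfeld condition to a standard holonomy computation on a flat line bundle over a torus. First I would show that $\iota_\nu^*\om = 0$: the splitting \cref{splitToTnu} implies that $T_0/T_\nu$ acts freely and transitively on $\Lambda_\nu$, so $T_x\Lambda_\nu = \{\xi^X(x)~|~\xi \in \t_0\}$ at every $x \in \Lambda_\nu$, and the moment map identity \cref{momentfla} together with $\mu_0 \equiv \nu$ on the fibre forces $\om(\xi_1^X,\xi_2^X)|_{\Lambda_\nu} = 0$. By the prequantization formula \cref{preq} this makes $\nabla^{\iota_\nu^*L^p}$ flat, so existence of the parallel non-vanishing section demanded by \cref{BS} is equivalent to triviality of the associated holonomy character $\pi_1(\Lambda_\nu) \to U(1)$.

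Next I would compute this holonomy via the Kostant formula for $L^p$ given in \cref{KostantLp}. For any $\xi \in \t_0$ the curve $t \mapsto \exp(-t\xi)\cdot x_0$ sits inside $\Lambda_\nu$, and it is a loop precisely when $\exp(-\xi) \in T_\nu$, i.e.\ when $\xi$ represents an element of the integral lattice of $\t_0/\t_\nu$. Substituting $\mu_0 \equiv \nu$ into \cref{KostantLp}, the parallel transport equation $\nabla^{L^p}_{\xi^X}\zeta^p = 0$ reduces to $L_\xi \zeta^p = 2\pi\sqrt{-1}\,p\,\langle\nu,\xi\rangle\,\zeta^p$ on $\Lambda_\nu$; integrating along the flow and invoking \cref{Lxidef} yields that the holonomy along such a loop is the scalar $e^{2\pi\sqrt{-1}p\langle\nu,\xi\rangle} \in U(1)$.

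Finally, these loops generate $\pi_1(\Lambda_\nu) \simeq \pi_1(T_0/T_\nu)$, so triviality of the holonomy character is equivalent to $p\langle\nu,\xi\rangle \in \Z$ for every $\xi$ in the integral lattice of $\t_0/\t_\nu$. The explicit description of $T_\nu$, given by the coordinate subtorus corresponding to the vanishing components of $\nu = (\nu_2,\ldots,\nu_n) \in \Delta$, ensures that $\nu$ already annihilates $\t_\nu$, so this integrality criterion over $\t_0/\t_\nu$ is equivalent under the identification $\t_0 \simeq \R^{n-1}$ with integral lattice $\Z^{n-1}$ to $p\nu \in \Z^{n-1}$, which is precisely \cref{BSprojfla}. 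The one step requiring care is the sign and scalar bookkeeping when exponentiating the Kostant formula; everything else is a direct application of the apparatus already assembled in \cref{KSsec} and \cref{projsec}.
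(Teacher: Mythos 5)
Your overall strategy --- restrict the prequantum connection to the fibre, observe that it is flat, and compute the holonomy character of $\pi_1(\Lambda_\nu)$ along orbits of the torus action via the Kostant formula \cref{KostantLp} --- is exactly the argument the paper gives, and your first two paragraphs are sound (the sign in $L_\xi\zeta^p=\mp 2\pi\sqrt{-1}\,p\langle\nu,\xi\rangle\zeta^p$ that you flag is immaterial for the integrality conclusion).

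The gap is in the last step. The stabilizer $T_\nu$ is \emph{not} always the coordinate subtorus attached to the vanishing components of $\nu=(\nu_2,\dots,\nu_n)$, and $\nu$ does \emph{not} always annihilate $\t_\nu$. The simplex \cref{Deltaproj} has $n$ facets: the $n-1$ facets $\{\nu_j=0\}$, where your description is correct, but also the facet $\{\sum_{j\geq 2}\nu_j=1\}$, corresponding to $z_1=0$ in \cref{XPie1}. On that facet the diagonal circle $\{(1,t,\dots,t)\}\subset T_0$ acts on $[0:z_2:\cdots:z_n]$ by an overall scalar, hence lies in $T_\nu$; so $\t_\nu$ contains $\xi_0=(1,\dots,1)$ and $\langle\nu,\xi_0\rangle=\sum_j\nu_j=1\neq 0$. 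Consequently ``integrality of $p\langle\nu,\cdot\rangle$ on the integral lattice of $\t_0/\t_\nu$'' is not equivalent to $p\nu\in\Z^{n-1}$ for the reason you give. The correct bookkeeping, which is what the paper does via the splitting \cref{splitToTnu}, is: decompose $\Z^{n-1}$ as $(\t_\nu\cap\Z^{n-1})\oplus(\text{a complementary lattice})$; the holonomy criterion gives $p\langle\nu,\xi\rangle\in\Z$ on the complement; and $p\langle\nu,\xi\rangle\in\Z$ holds \emph{automatically} for $\xi\in\t_\nu\cap\Z^{n-1}$ --- not because $\langle\nu,\xi\rangle$ vanishes, but because its values on the generators of that lattice are $0$ or $1$ (equivalently, because the weight of the $T_\nu$-action on the fibre $L^p_x$, which the Kostant formula identifies with $p\nu|_{\t_\nu}$, must integrate to an honest character of the torus $T_\nu$). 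The conjunction of the two conditions then yields $p\nu\in\Z^{n-1}$. With this correction your proof coincides with the paper's.
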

\begin{proof}
Let $\nu\in\Delta$, and fix a splitting \cref{splitToTnu}.
For any $p\in\N$, the Kostant formula \cref{KostantLp}
applied to $\mu_0:X\to\Delta$ over the fibre $\Lambda_\nu\subset X$
shows that a section $\zeta^p\in\cinf(\Lambda_\nu,\iota_\nu^*L^p)$ satisfies
$\nabla^{\iota_\nu^*L^p}\zeta^p=0$ if and only if, for all $x\in\Lambda_\nu$
and $\xi\in\t$, we have
\begin{equation}\label{zetafla}
e^{\xi}.\zeta^p(e^{-\xi}x)=e^{2\pi\sqrt{-1}p\<\nu,\xi\>}\zeta^p(x)\,.
\end{equation}
Note that if $\xi\in\t_\nu\subset\t_0$,
so that $e^{\xi}x=x$ by \cref{Tnudef},
equation \cref{zetafla} is automatically verified for any section over
$x\in\Lambda_\nu$ by definition \cref{Lgamdef} of the
line bundle $L:=L_\lambda$. As $e^{\xi}=1$ if and only if $\xi\in\Z^{n-1}$,
a non-vanishing section can satisfy \cref{zetafla}
only if $\<\nu,\xi\>\in\frac{1}{p}\Z$ for all $\xi\in\t_\nu\cap\Z^{n-1}$.

Choose now $x\in\Lambda_\nu$ and
$\eta_x\in L_x$, and define a
section $\eta\in\cinf(\Lambda_b,\iota_b^*L)$
for all $g\in T_0/T_\nu$
in the splitting \cref{splitToTnu} by the formula
\begin{equation}\label{etadef}
\eta(g.x):=g.\eta_x\,,
\end{equation}
which is well-defined since $T_0/T_\nu$ acts freely and transitively
on $\Lambda_\nu$.
We then have $g\eta=\eta$ for all $g\in T_0/T_\nu$ by definition
of the action \cref{action}, so that $L_\xi\,\eta=0$
for all $\xi\in\mathfrak{t}_0/\t_\nu\subset\t_0$ by definition of the Lie
derivative \cref{Lxidef}. We then get
from \cref{zetafla}
that a section non-vanishing $\zeta^p\in\cinf(\Lambda_\nu,\iota_\nu^*L^p)$
satisfies
$\nabla^{\iota_\nu^*L^p}\zeta^p=0$ if and only if there exists $c\in\C^*$
such that for all $x\in\Lambda_\nu$ and $\xi\in\t_0/\t_\nu$, we have
\begin{equation}\label{zetadef}
\zeta^p(e^{\xi}.x)=c\,e^{2\pi\sqrt{-1} p\<\nu,\xi\>} \eta^p(e^{\xi}.x)\,.
\end{equation}
As $e^{\xi}.x=x$ if and only if $\xi\in\Z^n$, we thus
see that \cref{zetadef} defines a non-vanishing section
if and only if $\<\nu,\xi\>\in\frac{1}{p}\Z$ for all
$\xi\in(\t_0/\t_\nu)\cap\Z^{n-1}$.
Hence a non-vanishing section over 
$\Lambda_\nu\subset X$ satisfying \cref{zetafla}
exists
if and only $\nu\in\Delta$ satisfies \cref{BSprojfla},
which proves the result by the \cref{BS} of the Bohr-Sommerfeld condition
at level $p\in\N$.



\end{proof}

Following for instance \cite[Ex.\,1.25]{Sze14},
recall that for any $p\in\N$ and under the identification
\cref{Xl=CPn},
the space of holomorphic sections $H^0(X,L^p)$ naturally identifies with the
space of
homogoneous polynomials of degree $p$ over $\C^{n}$, and the induced action 
of $U(n)$ on polynomials over $\C^n$ corresponds to the action
\cref{action} of $U(n)$ on $H^0(X,L^p)$. One then readily sees that
for all $\nu=(\nu_2,\cdots,\nu_n)\in\N^{n-1}$ such that
$|\nu|:=\sum_{j=2}^n\nu_j\leq p$,
the monomial
\begin{equation}\label{monom}
\begin{split}
z^\nu:\C^n&\longrightarrow\C\\
z&\longmapsto z_1^{p-|\nu|}\prod_{j=2}^n z_j^{\nu_j}
\end{split}
\end{equation}
generates the weight space \cref{wghtspacedef} of weight $\nu\in\Z^{n-1}$
in the decomposition \cref{Tirrepdec} of the space of homogeneous polynomials
with respect to the $T_0\subset U(n)$ action in the sense of
\cref{wghtspacedef},
so that the the weight space
$\Gamma_p:=\Gamma_{H^0(X,L^p)}^{T_0}$ defined in \cref{WVdef} is given by
\begin{equation}\label{Wpprojdef}
\Gamma_p=\,\Z^{n-1}\cap p\Delta\,,
\end{equation}
with all weights of multiplicity $1$.
Note that $\frac{1}{p}\Gamma_p$ becomes dense in $\Delta$ as $p\to+\infty$.
Now by the consequence \cref{BWcor2} of the Borel-Weil
\cref{BWth}, the action \cref{action} of $U(n)$
identifies $H^0(X,L^p)$ with
the irreducible represention $V(p\lambda)$ of $U(n)$ with highest
weight $p\lambda=(p,0,\cdots 0)\in\overline{\Gamma}^*_+$
via \cref{GammaUndef},
so that we have a weight decomposition \cref{Tirrepdec}
into $1$-dimensional subspaces
\begin{equation}\label{Vpldec}
V(p\lambda)=\bigoplus_{\nu_p\in\Gamma_p}\C_{\nu_p}\,,
\end{equation}
such that for any $\xi\in\mathfrak{t}_0$ and $v\in\C_{\nu_p}$, we have
$e^\xi.v=e^{2\pi\sqrt{-1}\<\nu_p,\xi\>} v$.
Comparing \cref{Wpprojdef} with \cref{BStoricprop}, we then see that
for any $p\in\N$,
the weight basis of $V(p\lambda)$ is in bijection with the fibres
of the moment map \cref{mu0def} satisfying the Bohr-Sommerfeld condition
at level $p\in\N$.
The following result shows that this is not a coincidence.

\begin{prop}\label{BStoric}
For any $p,\,k\in\N$ and $v_p\in\frac{1}{p}\Gamma_{p-k}\subset\Delta$,
let $f\in\cinf(\Lambda_{v_p},\iota_{v_p}^*L^{-k})$
be invariant by the action of $T_0$, and let
$\zeta^{p}\in\cinf(\Lambda_{v_p},\iota_{v_p}^*L^{p})$
satisfy $\nabla^{\iota_{v_p}^*L^{p}}\zeta^{p}\equiv 0$.
Then the
section $s_{\Lambda_{v_p}}\in H^0(X,L^{p-k})$ defined by
\begin{equation}\label{defLagstateproj}
s_{\Lambda_{v_p}}(x)=\int_{\Lambda_{v_p}} P_{p-k}(x,y).\zeta^p f(y)\,dv_{\Lambda_{v_p}}(y)\,,
\end{equation}
belongs to
$\C_{p v_p}\subset V((p-k)\lambda)$ in the decomposition
\cref{Vpldec}.
Furthermore, if $\zeta^{p}\neq 0$ and $f\neq 0$, then $s_{\Lambda_{v_p}}\neq 0$.
\end{prop}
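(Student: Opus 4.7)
The plan is to treat the two assertions separately. The first (weight content) is a pure equivariance argument based on the fact that the entire setup---the manifold $X=\CP^{n-1}$, the prequantum bundle $L$, the Kähler metric and the orthogonal projection $P_{p-k}$---is $U(n)$-equivariant, combined with the transformation property of $\zeta^p$ under $T_0$ inherited from the parallel equation \cref{zetafla}. The second (non-vanishing) reduces, via the reproducing formula \cref{proprepgal}, to checking that the $L^2$-pairing with the monomial $z^{pv_p}$ of \cref{monom}, which generates $\C_{pv_p}$, is non-zero.

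For the first assertion, the $U(n)$-invariance of $P_{p-k}$ implies that its Schwartz kernel satisfies $P_{p-k}(gx,gy)=g.P_{p-k}(x,y).g^{-1}$. For $g=e^{\xi}\in T_0$, a change of variables $y\mapsto e^{\xi}y$ in the integral \cref{defLagstateproj}---legitimate since $\Lambda_{v_p}$ is $T_0$-invariant as a fibre of $\mu_0$---gives
\begin{equation*}
(e^{\xi}s_{\Lambda_{v_p}})(x)=\int_{\Lambda_{v_p}}P_{p-k}(x,y).\big(e^{\xi}.(\zeta^p f)(e^{-\xi}y)\big)\,dv_{\Lambda_{v_p}}(y).
\end{equation*}
The parallel condition \cref{zetafla} yields $e^{\xi}.\zeta^p(e^{-\xi}y)=e^{2\pi\sqrt{-1}p\langle v_p,\xi\rangle}\zeta^p(y)$, while $T_0$-invariance of $f$ gives $e^{\xi}.f(e^{-\xi}y)=f(y)$. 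Combining these yields $e^{\xi}s_{\Lambda_{v_p}}=e^{2\pi\sqrt{-1}\langle pv_p,\xi\rangle}s_{\Lambda_{v_p}}$, so that $s_{\Lambda_{v_p}}$ lies in the weight space of weight $pv_p$, which is one-dimensional and equal to $\C_{pv_p}$ by the description of weights around \cref{Wpprojdef}.

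For the second assertion, the reproducing property \cref{proprepgal} gives
\begin{equation*}
\langle z^{pv_p},s_{\Lambda_{v_p}}\rangle_{L^2}=\int_{\Lambda_{v_p}}h^{p-k}\big(z^{pv_p}(y),\zeta^p f(y)\big)\,dv_{\Lambda_{v_p}}(y).
\end{equation*}
Both $z^{pv_p}|_{\Lambda_{v_p}}$ and $\zeta^p f$ transform under $T_0$ with the same character $e^{2\pi\sqrt{-1}\langle pv_p,\cdot\rangle}$, so sesquilinearity of $h^{p-k}$ makes the integrand $T_0$-invariant, hence constant on the connected $T_0$-orbit $\Lambda_{v_p}$ (using the free transitive action of $T_0/T_{v_p}$ from the splitting \cref{splitToTnu}). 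To see this constant is nonzero it suffices that each section is pointwise nonzero on $\Lambda_{v_p}$: $\zeta^p$ is nowhere vanishing as a nonzero parallel section; $f$ is nowhere vanishing since it is $T_0$-invariant, nonzero, and $\Lambda_{v_p}$ is a single $T_0$-orbit; and $z^{pv_p}$ is nowhere vanishing on $\Lambda_{v_p}$ because on the fibre $|z_j|^2=v_p^{(j)}$ the monomial factor $z_j^{pv_p^{(j)}}$ would vanish only if simultaneously $v_p^{(j)}=0$ and $pv_p^{(j)}>0$, which is impossible, and similarly the $z_1$-factor has nonnegative exponent $p-k-|pv_p|$ by the inclusion $v_p\in\frac{1}{p}\Gamma_{p-k}\subset\frac{p-k}{p}\Delta$.

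The main obstacle is keeping the various $T_0$-actions straight---on points of $X$, on sections of $L^p$ via \cref{zetafla}, on the auxiliary bundle $L^{-k}$ through invariance of $f$, and on $z^{pv_p}$ as a weight vector in the polynomial realization. All four are controlled by the single Kostant relation \cref{KostantLp}; once this is organized, the proof is essentially an equivariance/character-matching exercise followed by the observation that weight spaces in $V((p-k)\lambda)$ are one-dimensional.
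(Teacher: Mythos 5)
Your proposal is correct and follows essentially the same route as the paper: the weight assertion via $U(n)$-equivariance of the Bergman projection combined with the transformation law \cref{zetafla} and $T_0$-invariance of $f$, and the non-vanishing assertion via the reproducing property \cref{proprepgal} paired against the monomial $z^{pv_p}$, whose integrand is constant along the $T_0$-orbit $\Lambda_{v_p}$. Your extra details on why $z^{pv_p}$ and $f$ are nowhere vanishing on the fibre simply make explicit what the paper leaves as a routine check.
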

\begin{proof}
Recall first that the action
\cref{action} of $G$ on $\cinf(X,L^{p-k})$ preserves the orthogonal projection
$P_{p-k}:\cinf(X,L^{p-k})\fl H^0(X,L^{p-k})$ with respect to the $L^2$-product
\cref{L2}, which translates through \cref{Bergdef} to the property that
for any $g\in G$, any $x,\,y\in X$
and $\eta_y\in L^{p-k}_y$, we have
\begin{equation}
g.P_{p-k}(g^{-1}x,y).\eta_y=P_{p-k}(x,gy).(g.\eta_y)\in L^{p-k}_x\,.
\end{equation}
Then by formula \cref{defLagstateproj}, since
$f\in\cinf(\Lambda_{v_p},\iota_{v_p}^*L^{-k})$
is $T_0$-invariant and by formula
\cref{zetafla} for the flat section
$\zeta^p\in\cinf(\Lambda_{v_p},\iota_{v_p}^*L^p)$,
for any $\xi\in\mathfrak{t}$
and $x\in X$, we have
\begin{equation}
\begin{split}
&(e^\xi.s_{\Lambda_{{v_p}}})(x)
=\int_{\Lambda_{{v_p}}}\,e^{\xi}.P_{p-k}(e^{-\xi} x,y).\zeta^pf(y)\,dv_{\Lambda_{{v_p}}}(y)\\
&=e^{2\pi\sqrt{-1}p\<{v_p},\xi\>}\int_{\Lambda_{{v_p}}}\,P_{p-k}(x,e^{\xi}y).\zeta^pf(e^{\xi}y)\,dv_{\Lambda_{{v_p}}}(y)\\
&=e^{2\pi\sqrt{-1}p\<{v_p},\xi\>}\int_{\Lambda_{{v_p}}}\,P_{p-k}(x,y).\zeta^pf(y)\,dv_{\Lambda_{{v_p}}}(y)=e^{2\pi\sqrt{-1}p\<{v_p},\xi\>}s_{\Lambda_{v_p}}(x)\,,
\end{split}
\end{equation}
so that $s_{\Lambda_{{v_p}}}\in\C_{pv_p}$ in the decomposition
\cref{Vpldec}.

Let us now assume that $\zeta^{p}\neq 0$ and $f\neq 0$.
Writing $z^{p{v_p}}\in H^0(X,L^{p-k})$ for the monomial \cref{monom}
associated with $pv_p\in\Gamma_{p-k}$,
one readily sees from \cref{mu0def} that $z^{p{v_p}}$
does not vanish along $\Lambda_{v_p}\subset\CP^{n-1}$.
From formula \cref{zetafla} and picking $x_0\in\Lambda_{v_p}$,
since $z^{p{v_p}}\in\C_{pv_p}$ and
using that
$h^{L}$ is $G$-invariant, for any $\xi\in\mathfrak{t}_0$ we get
\begin{equation}
h^{L^{p-k}}\big(z^{p{v_p}}(e^\xi.x_0),\zeta^pf(e^\xi.x_0)\big)
=h^{L^{p-k}}\big(z^{p{v_p}}(x_0),\zeta^pf(x_0)\big)\,,
\end{equation}
so that $h^{L^{p-k}}(z^{p{v_p}},\zeta^pf)$ is constant along $\Lambda_{v_p}$
since $T_0$ acts transitively.
Using the reproducing property of \cref{proprepgal}, this
implies
\begin{equation}
\begin{split}
\<z^{p{v_p}},s_{\Lambda_{{v_p}}}\>_{L^2}&=\int_{\Lambda_{{v_p}}}
h^{L^{p-k}}(z^{p{v_p}}(x),\zeta^pf(x))\,dv_{\Lambda_{{v_p}}}(x)\\
&=\Vol(\Lambda_{{v_p}},dv_{\Lambda_{v_p}})\,h^{L^{p-k}}(z^{p{v_p}}(x_0),
\zeta^pf(x_0))\neq 0\,,
\end{split}
\end{equation}
which concludes the proof
\end{proof}

For $k=0$, \cref{BStoric} gives an explicit bijection between the
Bohr-Sommerfeld fibres of \cref{mu0def} at level $p\in\N$ as in
\cref{BStoricprop} and a weight basis of $V(p\lambda)$ as in \cref{Vpldec}
via the Borel-Weil \cref{BWth}. In the proof of \cref{toricth} given
in the next section, we will need \cref{BStoric} for $k=n$.

%
%

\subsection{Proof of \cref{toricth}}
\label{prooftoricth}

First note that by \cref{BStoric}, for any
sequence $\{v_p\in\frac{1}{p}\Gamma_p\}_{p\in\N}$, an associated sequence of
unit vectors $\{e_{pv_p}\in\C_{pv_p}\}_{p\in\N}$ satisfies
\begin{equation}\label{skpsLkp}
e_{pv_p}=z_p\,\frac{s_{\Lambda_{v_p}}}{\|s_{\Lambda_{v_p}}\|}\,,
\end{equation}
with $z_p\in\C$ such that $|z_p|=1$ for all $p\in\N$, where
$\{s_{\Lambda_{v_p}}\in H^0(X,L^p)\}_{p\in\N}$ is the isotropic state of
\cref{Lagstate} associated with the sequence of
Bohr-Sommerfeld submanifolds $\{(\Lambda_{v_p},\zeta^p)\}_{p\in\N}$.
On the other hand, since $\mu_0:X\to\Delta$ is a fibration over the interior
of any face of its image $\Delta\subset\R^{n-1}$ by \cref{mu0def},
we readily get that for any sequence
$\{v_p\in\Delta\}_{p\in\N}$ such that $v_p\to v$ as $p\to+\infty$
and belonging to the face of $v\in\Delta$ after some rank, the sequence
$\{\Lambda_{v_p}\subset X\}_{p\in\N}$ converges smoothly towards
$\Lambda_v\subset X$
in the sense of \cref{cvsmoothdef}.
Then the first statement of \cref{toricth} is a straightforward
consequence of \cref{theonorme,theointergal}.

To get the asymptotic expansion \cref{<e1e2>} from the asymptotic
expansion \cref{<u1u2>} for the scalar product of isotropic states,
we need to show that for any $x\in\Lambda_p^{(1)}\cap\Lambda_p^{(2)}$,
the factor $\lambda^{(q)}_p(x):=h^{L^p}(\zeta_1^p(x),\zeta_2^p(x))$
appearing in \cref{<u1u2>} satisfies
\begin{equation}\label{lambnda=ueieta}
\lambda^{(q)}_p(x)=h^{L^p}(\zeta_1^p(x_p),\zeta_2^p(x_p))
\,e^{2\pi\sqrt{-1}p\eta_{p}^{(q)}}
\end{equation}
where $\eta_{p}^{(q)}>0$ is the symplectic area of a disk $D\subset X$
bounded by a path
$\gamma_1\subset\Lambda_p^{(1)}$ joining
any point $x_p\in \Lambda_p^{(1)}\cap\Lambda_p^{(2)}$
to any point $y_p\in \Lambda_p^{(1)}\cap\Lambda_p^{(2)}$,
followed by a path $\gamma_2\subset\Lambda_p^{(2)}$ returning to
$x_p\in \Lambda_p^{(1)}\cap\Lambda_p^{(2)}$, which exists
 under the hypotheses of \cref{toricth} since $\Lambda_p^{(1)}=g\Lambda_{v_p}$ and $\Lambda_p^{(2)}=\Lambda_{w_p}$ are connected and
isotopic to each other.
Let us trivialize $L^p$ over $D$ in such a way that
$\zeta_2^p\equiv 1$ along $\gamma_2$.
Writing $\nabla^{L^p}|_D=d-2\pi\sqrt{-1}p\,\alpha$ in this trivialisation, we have
$\alpha|_{\gamma_2}\equiv 0$ by the flatness condition \cref{nabs=0} for
$\zeta_2^p$, and $\om=d\alpha$ by the prequantization formula \cref{preq}.
Then Stokes theorem gives
\begin{equation}\label{intDomingama}
\int_D\,\om=\int_{\gamma_1}\,\alpha\,.
\end{equation}
On the other hand, by the flatness condition \cref{nabs=0} for $\zeta_1^p$
and the fact that $d\zeta_2^p\equiv 0$ in the chosen trivialization of $L^p$
over $D$, we get
\begin{equation}
\begin{split}
\frac{d}{dt}h^{L^p}(\zeta_1(\gamma_1(t)),\zeta_2(\gamma_1(t)))&=
h^{L^p}\left(\zeta_1(\gamma_1(t)),\nabla^{L^p}\zeta_2(\gamma_1(t))\right)\\
&=2\pi\sqrt{-1}\,p\,\alpha_{\gamma_1(t)}
\,h^{L^p}(\zeta_1(\gamma_1(t)),\zeta_2(\gamma_1(t)))\,,
\end{split}
\end{equation}
so that for all $t\in[0,1]$, we can solve the corresponding ODE to get
\begin{equation}
h^{L^p}(\zeta_1(\gamma_1(t)),\zeta_2(\gamma_1(t)))=
\exp\left(2\pi\sqrt{-1}p\int_0^t\gamma^*\alpha\right)
h^{L^p}(\zeta_1(x_p),\zeta_2(x_p))\,.
\end{equation}
Setting $t=1$, we get \cref{lambnda=ueieta} from \cref{intDomingama}.
Together with \cref{theonorme,theointergal}, this
gives the asymptotic expansion \cref{<e1e2>}.

Let us now consider the assumptions of formula \cref{toricfla},
and recall for instance from \cite[Ex.\,1.27]{Sze14}
that the canonical line bundle of 
$X\simeq\CP^n$ is given by $K_X=L^{-n}$,
so that when $n\in\N^*$ is even, we get a natural choice of square
root given by $K_{X}^{1/2}=L^{-n/2}$,
with induced Hermitian metric and connection.
In particular, for
all $p\in\N$ we get
\begin{equation}
L^p\otimes K_X^{1/2}=L^{p-\frac{n}{2}}\,.
\end{equation}
The action of $T_0\subset U(n)$ on $L$ induces an action on
$K_{X}^{1/2}$ compatible with its natural action on $K_X$,
and since it acts by isometries on $(X,g^{TX})$,
for any $v_p\in\Delta^0\subset\Delta$,
the Riemannian volume form
$dv_{\Lambda_{v_p}}\in\cinf(\Lambda_{v_p},\det(T\Lambda_{v_p}))$
is invariant with respect to the action of $T_0$.
Letting $dv_{\Lambda_{v_p}}^{1/2}\in\cinf(X,\iota_{v_p}^*K_X^{1/2})$ be
a square root in the identification \cref{isoLKX}
of $\iota_{v_p}^*K_X$ with $\det(T^*_\C\Lambda_{v_p})$,
\cref{BS,BStoric} show that the Lagrangian state
$\{s_{\Lambda_{v_p}}\in H^0(X,L^p\otimes K_X^{1/2})\}_{p\in\N}$
associated with the sequence of Bohr-Sommerfeld Lagrangian submanifolds
$\{(\Lambda_{v_p},dv_{\Lambda_{v_p}}^{1/2},\zeta^p)\}_{p\in\N}$
converging to $\{(\Lambda_v,dv_{\Lambda_v}^{1/2})\}_{p\in\N}$ belong
to $\C_{pv_p}\subset V((p-n/2)\lambda)$ and is non-vanishing
for all $p\in\N$.

Now since $v_p,\,w_p\in\Delta^0\subset\Delta$ for all $p\in\N$
big enough under the assumptions of formula \cref{toricfla},
the $(n-1)$-dimensional torus $T_0\subset U(n)$ acts freely
and transitively on $\Lambda_{v_p}$ and on $\Lambda_{w_p}$, so that,
letting $\{\xi_j\}_{j=1}^{n-1}$ be a
basis of the integral lattice of $\mathfrak{t}_0\subset\u(n)$, we get that
$\{\xi_j^X\}_{j=1}^{n}$
induces a basis of $T_x\Lambda_{w_p}$ at all $x\in\Lambda_{w_p}$ and that
$\{\Ad_g\xi_j^X\}_{i=1}^{n}$
induces a basis of $T_x(g\Lambda_{v_p})$ at all $x\in\,g\Lambda_{w_p}$.
\cref{KKSprop} then gives
\begin{equation}
\om_\alpha(\Ad_g\xi_j^X,\xi_k^X)=\alpha([\Ad_g\xi_j,\xi_k])\,,
\end{equation}
for any $\alpha\in g\Lambda_{v_p}\cap\Lambda_{w_p}\subset\u(n)^*$ and
$1\leq j,\,k\leq n-1$. Furthermore,
by $T_0$-invariance and the fact that $\Vol(T_0)=1$, we get the following
identity of functions over $\Lambda$,
\begin{equation}\label{dvLvp}
dv_{\Lambda_{w_p}}(\xi_1^X,\cdots,\xi_n^X)\equiv
\Vol(\Lambda_{w_p},dv_{\Lambda_{w_p}})\,.
\end{equation}
On the other hand, \cref{theonorme} applied to
$\{s_{\Lambda_{w_p}}\in H^0(X,L^p\otimes K_X^{1/2})\}_{p\in\N}$ gives
the following asymptotic expansion as $p\to+\infty$,
\begin{equation}\label{normeproj}
\big\|s_{\Lambda_{w_p}}\big\|_{L^2}=2^{\frac{n}{4}}p^{\frac{n}{4}}\left(
\Vol(\Lambda_{w_p},dv_{\Lambda_{w_p}})^{1/2}
+O(p^{-1})\right)\,,
\end{equation}
Applying \cref{corintergal} to \cref{skpsLkp} with the vectors
$\xi_j:=\Vol(\Lambda_{w_p},dv_{\Lambda_{w_p}})^{-\frac{1}{n-1}}\xi_j^X$
for all $1\leq j\leq n-1$
as well as the analogous formula for
$\{gs_{\Lambda_{v_p}}
=s_{g\Lambda_{v_p}}\in H^0(X,L^p\otimes K_X^{1/2})\}_{p\in\N}$
since $U(n)$ acts by biholomorphic isometries, this gives
formula \cref{toricfla} and concludes the proof.

\section{Quantization of Gelfand-Zetlin systems}
\label{GZquantsec}

In this Section, we fix $n\in\N^*$ and
consider the case of irreducible representations
of $G=U(n)$ with highest weight 
\begin{equation}\label{GammaregUndef}
\lambda\in\overline{\Gamma}^*_+=\{(\lambda_1,\cdots\lambda_n)\in\Z^n
~|~\lambda_1>\lambda_2>\cdots>\lambda_n\}\,,
\end{equation}
in the sense of \cref{highestwghtdef},
by definition \cref{Gamma0def} of the regular dominant weights
under the identifications \cref{Weylchamber} and \cref{GammaUndef}.

\subsection{Gelfand-Zetlin bases}
\label{GZbassec}

Recall from \cref{highestwghtth}
that the unitary irreducible representations
of $U(n)$ are in bijective correspondence
with the discrete set \cref{GammaregUndef},
sending $\lambda\in\overline{\Gamma}_+^*$
to the associated \emph{highest weight representation} $V(\lambda)$.
Recalling also the sequence of inclusions \cref{GLinc},
we have the following result of Weyl, whose statement
can be found in \cite[Prop.\,6.2]{GS83}.

\begin{theorem}\label{Weylprop}
{\cite[Th.\,8.1.1]{GW09}}
For any $\lambda=(\lambda_1,\cdots,\lambda_n)\in\Gamma^*_+$,
the associated highest weight representation $V(\lambda)$
admits the following decomposition into irreducible representations
of $U(n-1)\subset U(n)$,
\begin{equation}\label{decompon-1}
V(\lambda)=\bigoplus_{\mu\in \Gamma_1(\lambda)}\,V(\mu)\,,
\end{equation}
where
\begin{equation}
\Gamma_1(\lambda)=\{\mu\in\Z^{n-1}~|~\lambda_1\geq\mu_1\geq\lambda_2\geq\mu_2\geq\cdots
\geq\lambda_{n-1}\geq\mu_{n-1}\geq\lambda_n\}
\end{equation}
and $V(\mu)$ is the irreducible representation of $U(n-1)$ with highest weight
$\mu\in\Z^{n-1}$.
\end{theorem}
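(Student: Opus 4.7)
The plan is to derive this classical branching rule from the Weyl character formula, reducing the statement to a combinatorial identity for Schur polynomials.

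First, by Weyl's character formula, together with the explicit form of the Weyl denominator for $U(n)$ (which follows from the description of positive roots in \cref{structsec} and the half-sum of roots $\rho_n$ in \cref{rhon}), the character of $V(\lambda)$ evaluated on a diagonal matrix $\operatorname{diag}(x_1,\ldots,x_n)\in T$ equals the Schur polynomial
\[
s_\lambda(x_1,\ldots,x_n) = \frac{\det\bigl(x_i^{\lambda_j+n-j}\bigr)_{1\leq i,j\leq n}}{\det\bigl(x_i^{n-j}\bigr)_{1\leq i,j\leq n}}\,.
\]
The same formula, applied to the smaller group $U(n-1)$, identifies the character of its highest weight representation $V(\mu)$ with $s_\mu(x_1,\ldots,x_{n-1})$.

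Second, the restriction of $V(\lambda)$ to $U(n-1)\subset U(n)$ is determined as a representation by its character on the maximal torus $T'\subset U(n-1)$ of diagonal matrices $\operatorname{diag}(x_1,\ldots,x_{n-1})$, embedded into $T$ as $\operatorname{diag}(x_1,\ldots,x_{n-1},1)$. Since irreducible characters of $U(n-1)$ are linearly independent, the claimed decomposition \cref{decompon-1} is equivalent to the Schur polynomial identity
\[
s_\lambda(x_1,\ldots,x_{n-1},1) = \sum_{\mu\in\Gamma_1(\lambda)} s_\mu(x_1,\ldots,x_{n-1})\,.
\]

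Third, I would establish this identity combinatorially. The polynomial $s_\lambda(x_1,\ldots,x_n)$ is the generating function for semistandard Young tableaux $T$ of shape $\lambda$ with entries in $\{1,\ldots,n\}$, weighted by $\prod_{i=1}^n x_i^{\#\{\text{boxes of $T$ with entry }i\}}$. Setting $x_n=1$, such tableaux are in bijection with pairs $(T',\lambda/\mu)$, where $T'$ is a semistandard tableau of some shape $\mu\subseteq\lambda$ with entries in $\{1,\ldots,n-1\}$ (obtained by deleting the boxes filled with $n$) and $\lambda/\mu$ is the skew shape filled entirely with the symbol $n$. Semistandardness forces $\lambda/\mu$ to be a \emph{horizontal strip}, namely $\lambda_j\geq\mu_j\geq\lambda_{j+1}$ for $1\leq j\leq n-1$, which is precisely the condition $\mu\in\Gamma_1(\lambda)$. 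Summing over $\mu$ gives the desired identity, and in particular each $V(\mu)$ appears with multiplicity exactly $1$.

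The key ingredient is the Schur polynomial identity above, which is a standard special case of Pieri's rule in symmetric function theory; it can also be proved algebraically by expanding the Jacobi–Trudi determinant along the last row. In this sense the combinatorial core of the proof is elementary, and the main work lies in the careful dictionary between branching multiplicities and symmetric function identities established via Weyl's character formula.
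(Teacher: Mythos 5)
The paper does not prove this statement at all: it is quoted as a classical result of Weyl with a citation to \cite[Th.\,8.1.1]{GW09}, so there is no in-paper argument to compare against. Your proof is a correct and complete standard derivation of the branching rule. The reduction to a character identity is sound (characters of a compact group are determined on a maximal torus, and irreducible characters of $U(n-1)$ are linearly independent), the identification of the character of $V(\lambda)$ with the bialternant Schur polynomial is the Weyl character formula for $U(n)$, and the combinatorial step — stratifying semistandard tableaux by the horizontal strip occupied by the entries equal to $n$, which is exactly the interlacing condition $\lambda_j\geq\mu_j\geq\lambda_{j+1}$ — is the classical branching identity for Schur functions; it also delivers the multiplicity-one assertion for free. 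One small point to tidy up: the set $\Gamma_1(\lambda)$ and the hypothesis on $\lambda$ allow negative entries, whereas the semistandard-tableau model of $s_\lambda$ requires $\lambda$ to be a partition. This is harmless — write $V(\lambda)=V(\lambda+(m,\ldots,m))\otimes{\det}^{-m}$ for $m$ large enough that $\lambda_n+m\geq 0$, note that ${\det}^{-m}$ restricts to ${\det}^{-m}$ of $U(n-1)$ under $\diag(x_1,\ldots,x_{n-1},1)$, and that the interlacing condition is invariant under the simultaneous shift of $\lambda$ and $\mu$ by $(m,\ldots,m)$ — but it should be said explicitly before invoking the tableau generating function.
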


For any $k\in\N^*$, let us write $T_k\subset U(k)$
for the maximal torus of the unitary group $U(k)$ given
by diagonal matrices, and $\t_k\subset\u(k)$ for the
associated Lie subalgebra of the Lie algebra
$\u(k):=\text{Lie}(U(k))$. Under the identification
$\t_k^*\simeq\R^k$ given by \cref{t*=Rn}, we get an identification
\begin{equation}
\bigoplus_{k=1}^n\,\t_k^*\simeq\bigoplus_{k=1}^n\R^k\,,
\end{equation}
which we will consider
as the space of upper-triangular $n\times n$
matrices with real coefficients. For each
$1\leq i\leq n$, we write
\begin{equation}\label{pik}
\begin{split}
\pi_i:\bigoplus_{k=1}^n\t_k^*&\longrightarrow\t_i^*\\
\big(\nu_j^{(k)}\big)_{1\leq j\leq k\leq n}&\longmapsto
(\nu_j^{(i)})_{1\leq j\leq i}\,.
\end{split}
\end{equation}
for the associated canonical projection.

The following result follows immediately from \cref{Weylprop} by
decreasing induction on $n\in\N^*$ along the sequence of inclusions
\cref{GLinc}.

\begin{cor}\label{GZdecprop}
For any $\lambda=(\lambda_1,\cdots,\lambda_n)\in\Gamma^*_+$,
there exists a unique decomposition
\begin{equation}\label{GZdecfla}
V(\lambda)=\bigoplus_{\nu\in \Gamma(\lambda)}\,\C_\nu\,,
\end{equation}
with
\begin{multline}\label{Gammaldef}
\Gamma(\lambda)
=\{(\nu_j^{(k)})_{1\leq j\leq k\leq n}\in
\bigoplus_{k=1}^n\Z^k~|~\nu_j^{(n)}=\lambda_j\text{ for all }1\leq j\leq n\,,\\
\nu_j^{(k)}\geq\nu_j^{(k-1)}\geq \nu_{j+1}^{(k)}
\textup{ for all }1\leq j\leq k\leq n\}\,.
\end{multline}
where for any $\nu\in\Gamma(\lambda)$, the subspace
$\C_\nu\subset V(\lambda)$ is the $1$-dimensional space characterized by
the fact
that
$\C_\nu\subset V(\pi_k(\nu))$
at each step $1\leq k\leq n-1$ in the decomposition \cref{decompon-1}
by decreasing induction from $n$ to
$k<n$ along the sequence of inclusions \cref{GLinc}.
\end{cor}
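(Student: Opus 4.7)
The plan is to proceed by decreasing induction on $n \in \N^*$ along the chain \cref{GLinc}, with \cref{Weylprop} doing all the work at each step. First I would set up the induction: the base case $n = 1$ is immediate, since $V(\lambda)$ is one-dimensional for every $\lambda \in \Z = \overline{\Gamma}^*_+$ associated to $U(1)$, and the indexing set $\Gamma(\lambda) = \{\lambda\}$ satisfies the statement tautologically.

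For the inductive step, assume the corollary holds for $U(n-1)$ in place of $U(n)$. Given $\lambda \in \overline{\Gamma}^*_+$ for $U(n)$, \cref{Weylprop} yields the multiplicity-free decomposition
\begin{equation*}
V(\lambda) = \bigoplus_{\mu \in \Gamma_1(\lambda)} V(\mu)
\end{equation*}
into irreducible representations of $U(n-1) \subset U(n)$. Applying the inductive hypothesis to each $V(\mu)$ produces a canonical decomposition $V(\mu) = \bigoplus_{\nu' \in \Gamma(\mu)} \C_{\nu'}$, where $\Gamma(\mu)$ is the Gelfand--Zetlin pattern set for $U(n-1)$. Concatenating $\mu$ on top and $\nu'$ underneath identifies $\bigsqcup_{\mu \in \Gamma_1(\lambda)} \Gamma(\mu)$ with $\Gamma(\lambda)$ as defined in \cref{Gammaldef}, since the interlacing condition in $\Gamma_1(\lambda)$ is exactly the top row of interlacing conditions defining $\Gamma(\lambda)$, while the remaining interlacings are supplied by the inductive $\Gamma(\mu)$. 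Taking the direct sum over $\mu \in \Gamma_1(\lambda)$ then gives the required decomposition \cref{GZdecfla}, and the characterization via the projections $\pi_k$ of \cref{pik} follows directly from the construction: by definition, $\C_\nu \subset V(\pi_{n-1}(\nu)) = V(\mu)$, and the inclusions $\C_\nu \subset V(\pi_k(\nu))$ for $k < n-1$ come from the inductive statement applied to $V(\mu)$.

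For uniqueness, the key point is that at every stage of the iteration the decomposition into summands is canonical, because each irreducible constituent appears with multiplicity one. More precisely, the isotypic decomposition of a finite-dimensional unitary representation of a compact group is always unique, and here the multiplicity-one property of \cref{Weylprop} implies that the isotypic components are exactly the irreducible summands $V(\mu)$, so no choice is involved in extracting them. Uniqueness of \cref{GZdecfla} then propagates down the chain \cref{GLinc} by the inductive hypothesis, and the characterization through the nested inclusions pins down $\C_\nu$ unambiguously as a one-dimensional subspace of $V(\lambda)$.

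There is essentially no hard step: the content is entirely in \cref{Weylprop}, and the proof is a bookkeeping exercise checking that the iterated index sets match \cref{Gammaldef}. The only point requiring some care is to verify that the interlacing conditions accumulate correctly as one descends the chain \cref{GLinc}, which is a matter of comparing the conditions defining $\Gamma_1(\mu)$ with the slice of \cref{Gammaldef} at rows $k$ and $k-1$; this is immediate from the definitions.
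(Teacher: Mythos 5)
Your proposal is correct and follows essentially the same route as the paper, which derives the corollary by iterating the multiplicity-free branching rule of \cref{Weylprop} down the chain \cref{GLinc}; your induction on $n$, the identification of $\Gamma(\lambda)$ with $\bigsqcup_{\mu\in\Gamma_1(\lambda)}\Gamma(\mu)$, and the uniqueness argument via multiplicity one are exactly the intended content.
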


For any $\lambda\in\overline{\Gamma}^*_+$,
the decomposition \cref{GZdecfla} is called
the \emph{Gelfand-Zetlin decomposition} of $V(\lambda)$.
Note that \cref{GZdecprop} immediately implies that 
\begin{equation}\label{dimVlambda}
\dim V(\lambda)=\#\Gamma(\lambda)\,,
\end{equation}
as stated in \cite[Prop.\,6.5]{GS83}.
This also leads to the following definition, introducing the main notion used
in this paper.

\begin{defi}\label{GZbasdef}
A \emph{Gelfand-Zetlin basis} of the unitary irreducible representation
$V(\lambda)$ with highest weight
$\lambda\in\overline{\Gamma}^*_+$
is an orthonormal basis $\{e_\nu\}_{\nu\in\Gamma(\lambda)}$
such that $e_\nu\in\C_\nu$ for all
$\nu\in\Gamma(\lambda)$ in the decomposition \cref{GZdecfla}.
\end{defi}

As the Gelfand-Zetlin decomposition \cref{GZdecfla}
of \cref{GZdecprop} is unique, the
Gelfand-Zetlin basis of \cref{GZbasdef}
is also unique up to multiplication of each basis
vector by a unit scalar.

Recall now the sequence of inclusions \cref{GLinc},
where for any $k\in\N$ with $k\leq n$,
the subgroup $U(k)\subset U(n)$ consists of the
two-blocks diagonal matrices with lower-right
$(n-k)\times (n-k)$ block given by the diagonal identity
matrix of $\C^{n-k}$.
For any such $k\in\N$, we write
$\u(k)\subset\u(n)$ for the Lie subalgebra
of $U(k)\subset U(n)$.
%
We then get a natural sequence of inclusions
\begin{equation}\label{gincl}
U(\u(1))\subset U(\u(2))\subset\cdots\subset U(\u(n))\,,
\end{equation}
where for all $1\leq k\leq n$, the subalgebra
$U(\u(k))\subset U(\u(n))$
of the envelopping algebra $U(\u(n))$
is the subalgebra generated by $j(\u(k))\subset U(\u(n))$
via the canonical
inclusion \cref{idef}. The subalgebra
$U(\u(k))\subset U(\u(n))$ is naturally isomorphic to the
envelopping algebra of $\u(k)$, for all $1\leq k\leq n$.


\begin{defi}\label{GZalgdef}
The \emph{Gelfand-Zetlin subalgebra} $\AA_n\subset U(\u(n))$
is the subalgebra generated by the centers
$Z[U(\u(k))]\subset U(\u(n))$ via the inclusion \cref{gincl},
for all $1\leq k\leq n$.
\end{defi}

The following well-known property, which can be found for instance in \cite{DOF91},
is a straightforward consequence of \cref{GZalgdef}.

\begin{prop}\label{GZalgprop}
The Gelfand-Zetlin subalgebra $\AA_n\subset U(\u(n))$ of \cref{GZalgdef}
is commutative.
\end{prop}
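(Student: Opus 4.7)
The plan is to show that the generating set $\bigcup_{k=1}^n Z[U(\u(k))] \subset U(\u(n))$ of $\AA_n$ consists of pairwise commuting elements, which will be enough to conclude the commutativity of the whole subalgebra $\AA_n$ by \cref{GZalgdef}.

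So let $1 \leq k, l \leq n$, and pick arbitrary $z_k \in Z[U(\u(k))]$ and $z_l \in Z[U(\u(l))]$. By symmetry, I may assume $k \leq l$. The key observation is then that the sequence of inclusions \eqref{gincl} gives $U(\u(k)) \subset U(\u(l))$, so in particular
\begin{equation}
z_k \in Z[U(\u(k))] \subset U(\u(k)) \subset U(\u(l))\,.
\end{equation}
On the other hand, by definition $z_l \in Z[U(\u(l))]$ belongs to the center of $U(\u(l))$, hence commutes with every element of $U(\u(l))$. In particular, $z_l$ commutes with $z_k$, as desired.

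Since this holds for arbitrary $z_k \in Z[U(\u(k))]$ and $z_l \in Z[U(\u(l))]$ and arbitrary $k, l$, any two elements of the generating set $\bigcup_{k=1}^n Z[U(\u(k))]$ commute. As $\AA_n$ is the subalgebra of $U(\u(n))$ generated by this set, it follows that $\AA_n$ is commutative. There is no real obstacle here: the statement is a formal consequence of the nesting \eqref{gincl} together with the tautological fact that the center of $U(\u(l))$ commutes with any subalgebra of $U(\u(l))$, in particular with $Z[U(\u(k))]$ for any $k \leq l$.
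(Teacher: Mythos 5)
Your proof is correct and is essentially the paper's own argument: reduce to generators, note that for $k\leq l$ the nesting \eqref{gincl} places $Z[U(\u(k))]$ inside $U(\u(l))$, and use that $Z[U(\u(l))]$ commutes with all of $U(\u(l))$. (The paper's version has the roles of the two indices apparently transposed in its wording, but the idea is identical to yours.)
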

\begin{proof}
By \cref{GZalgdef}, it is enough to check the statement
for $z_1,\,z_2\in\AA_n$ satisfying $z_j\in Z[U(\u(k_j))]$
with $j=1,\,2$ and $k_1\geq k_2$ respectively. Now $z_1\in Z[U(\u(k_1))]$
is naturally included in $U(\u(k_2))$ via the sequence of inclusions \cref{gincl},
and as $z_2\in Z[U(\u(k_2))]$ belongs to the center of
$U(\u(k_2))$, we get that $z_1$ and $z_2$
commute. This concludes the proof of the Proposition.
\end{proof}


Recall that for any $1\leq k\leq n$,
the Weyl group of $\u(k)$ coincides with the $k^{\text{th}}$
symmetric group $\mathfrak{S}_k$, acting on
$\t_k^*\simeq\R^k$ by permutation of coordinates under the identification \cref{t*=Rn}.
We consider the cartesian product
\begin{equation}\label{hatW}
\widetilde{W}=\prod_{k=1}^n\,\mathfrak{S}_k\,,
\end{equation}
with induced action on $\bigoplus_{k=1}^n
\t_k^*$.
Recall also formula \cref{rhon} for the half-sum of positive roots $\rho_k\in\t_k^*$, and 
let us set
\begin{equation}\label{hatrho}
\widetilde\rho=\left(\frac{k-2j+1}{2}\right)_{1\leq j\leq k\leq n}
\in\bigoplus_{k=1}^n\t_k^*\,,
\end{equation}
so that $\pi_k(\widetilde{\rho})=\rho_k$, for all $1\leq k\leq n$.
Recall the action \cref{Phi} of the enveloping algebra $U(\u(n))$
on any representation $V$ of $U(n)$.
We then have the following result, which will be crucial in the sequel.

\begin{prop}\label{hatgamprop}
The Gelfand-Zetlin subalgebra $\AA_n\subset U(\u(n))$
of \cref{GZalgdef} acts diagonally in the
Gelfand-Zetlin decomposition \cref{GZdecfla}
of \cref{GZdecprop}, and there is an injective
algebra morphism
\begin{equation}\label{hatgamfla}
\widetilde\gamma:\AA_n
\lhook\joinrel
\longrightarrow
\R\big[\bigoplus_{k=1}^n
\t_k^*\big]^{\widetilde{W}}\,,
\end{equation}
characterised by the fact that
for any $\lambda\in\Gamma_+^*$,
the action of $z\in\AA_n$
on the vector $e_\nu\in V(\lambda)$ parametrized by
$\nu\in\Gamma(\lambda)$
in a Gelfand-Zetlin basis of \cref{GZbasdef}
is given by
\begin{equation}\label{evGZ}
\Phi(z).e_\nu=\widetilde\gamma(z)(2\pi\sqrt{-1}(\nu+\widetilde{\rho}))\, e_\nu\,.
\end{equation}
Furthermore, for any $1\leq j\leq n$, the restriction of $\widetilde{\gamma}$
to the subalgebra $Z[U(\u(j))]\subset\AA_n$ satisfies
\begin{equation}\label{hatgam=gamk}
\widetilde\gamma\,\big|_{Z[U(\u(j))]}=\pi_j^*\circ\gamma\,.
\end{equation}
\end{prop}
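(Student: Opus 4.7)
The plan is to build $\widetilde\gamma$ level by level along the chain \cref{gincl}, using at each step the Harish-Chandra \cref{HCth} applied to the reductive Lie algebra $\u(k)$. First I would establish the diagonal action claim. Let $\lambda \in \overline{\Gamma}_+^*$ and $\nu \in \Gamma(\lambda)$; by \cref{GZdecprop}, the line $\C_\nu \subset V(\lambda)$ lies inside the irreducible $U(k)$-subrepresentation $V(\pi_k(\nu)) \subset V(\lambda)$ at every level $1 \leq k \leq n$. Since $Z[U(\u(k))]$ commutes with the $U(k)$-action, it preserves the irreducible $U(k)$-summands of $V(\lambda)|_{U(k)}$ and acts on each by a scalar through Schur's lemma; in particular it stabilizes $\C_\nu$. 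By the commutativity \cref{GZalgprop}, every element of $\AA_n$, being a polynomial in such generators, then acts diagonally in the Gelfand-Zetlin basis $\{e_\nu\}_{\nu \in \Gamma(\lambda)}$.

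Next I would compute these eigenvalues via \cref{HCth}. For $z \in Z[U(\u(k))]$, applying \cref{HCfla} to the irreducible $U(k)$-module $V(\pi_k(\nu))$ gives eigenvalue $\gamma(z)(2\pi\sqrt{-1}(\pi_k(\nu) + \rho_k))$, which equals $(\pi_k^*\gamma(z))(2\pi\sqrt{-1}(\nu + \widetilde\rho))$ since $\pi_k(\widetilde\rho) = \rho_k$ by \cref{hatrho}. This dictates the definition $\widetilde\gamma(z) := \pi_k^*\gamma(z)$ on each generator $z \in Z[U(\u(k))]$, which immediately gives the restriction formula \cref{hatgam=gamk}. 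The image lies in $\R[\bigoplus_k \t_k^*]^{\widetilde W}$ since the factor $\mathfrak{S}_k \subset \widetilde W$ preserves $\gamma(z) \in \R[\t_k^*]^{\mathfrak{S}_k}$ while the remaining factors of $\widetilde W$ act trivially on $\t_k^*$.

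The extension to a multiplicative morphism on $\AA_n$ is then forced, and the main obstacle is its well-definedness: if $P$ and $P'$ are two products of generators that coincide as elements of $\AA_n$, the associated polynomials $\widetilde\gamma(P)$ and $\widetilde\gamma(P')$ must be equal. This I would establish by a Zariski density argument: by the eigenvalue calculation above, both polynomials take the same value at the points $2\pi\sqrt{-1}(\nu + \widetilde\rho)$ for every $\lambda \in \overline{\Gamma}_+^*$ and every $\nu \in \Gamma(\lambda)$, and as $\lambda$ varies over regular dominant weights the interlacing inequalities \cref{Gammaldef} make such points Zariski-dense in $\bigoplus_k \t_{k,\C}^*$, so the two polynomials agree. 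The characterization \cref{evGZ} then holds by construction on generators and extends multiplicatively. Finally, injectivity of $\widetilde\gamma$ follows by running the argument in reverse: if $\widetilde\gamma(z) = 0$, then by \cref{evGZ} the element $z$ annihilates every Gelfand-Zetlin basis vector of every $V(\lambda)$, hence every finite-dimensional representation of $U(n)$; since $U(\u(n))$ acts faithfully on the direct sum of all irreducibles by the Peter-Weyl theorem, $z = 0$ in $U(\u(n))$ and a fortiori in $\AA_n$.
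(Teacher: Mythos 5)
Your argument is correct and, for the diagonal action, the eigenvalue computation via \cref{HCth}, and the well-definedness of $\widetilde\gamma$ (your Zariski-density argument is exactly the paper's remark that \cref{evGZ} prescribes the polynomial on the integral points of an open cone in $\bigoplus_k\R^k$), it coincides with the paper's proof. The one step where you genuinely diverge is injectivity. You reduce it to the standard fact that $U(\u(n))$ acts faithfully on the direct sum of all finite-dimensional irreducibles, which you attribute to Peter--Weyl; this is a legitimate and more classical route (it is essentially the statement that the representative functions are dense in $\cinf(U(n))$ so that a nonzero left-invariant differential operator cannot annihilate all of them, cf.\ also \cite[2.5.7]{Dix96} for the semisimple part). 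The paper instead proves this faithfulness semiclassically: writing $z=\sym(P)$ via \cref{PBWprop}, it finds $\lambda\in\Gamma^*_+$ with $P(\lambda)\neq 0$, and then uses \cref{L=Tp} together with the norm correspondence \cref{BMS0} to show that $Q_p(P)$ cannot vanish on $V(p\lambda)$ for all $p$. Your argument is shorter and purely representation-theoretic, at the cost of importing an external density/faithfulness fact; the paper's argument is self-contained within the Berezin--Toeplitz framework it has already set up and fits the semiclassical theme of the article. Either is acceptable, though if you keep the Peter--Weyl route you should state precisely which faithfulness statement you are using, since density of matrix coefficients in $C^0$ alone does not immediately kill a differential operator.
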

\begin{proof}
Let $\lambda\in\overline{\Gamma}_+^*$, and consider a Gelfand-Zetlin
basis $\{e_\nu\}_{\nu\in\Gamma(\lambda)}$ of the associated
highest weight representation $V(\lambda)$ as in
\cref{GZbasdef}.
Then for any $\nu\in\Gamma(\lambda)$,
any $1\leq k\leq n$ and any
$z\in Z[U(\u(k))]\subset\AA_n$,
\cref{HCth} together with
\cref{GZdecprop} readily imply that
\begin{equation}\label{evGZdem}
\Phi(z).e_\nu=\gamma(z)(2\pi\sqrt{-1}\pi_k(\nu+\widetilde{\rho}))\,e_\nu
\,.
\end{equation}
Since by \cref{GZalgdef}, the Gelfand-Zetlin subalgebra $\AA_n\subset U(\u(n))$
is generated by
$Z[U(\u(k))]\subset U(\u(n))$ for all $1\leq k\leq n$, this readily implies that
$\AA_n$ acts diagonally on any Gelfand-Zetlin basis, with eigenvalues
determined by formula \cref{evGZdem}, and that if a morphism
$\widetilde{\gamma}$ as in \cref{hatgamfla}
satisfying \cref{hatgam=gamk} exists, then
it satisfies \cref{evGZ} and is uniquely determined by this property.

To show the existence of the morphism \cref{hatgamfla},
pick $z\in\AA_n$ and choose a presentation of it
as a linear combination of products of elements of
$Z[U(\u(k))]\subset U(\u(n))$ for all $1\leq k\leq n$.
Through the injective morphism
\begin{equation}
\pi_j^*:\R[\t_j^*]^{\mathfrak{S}_k}
\lhook\joinrel\longrightarrow\R\big[\bigoplus_{k=1}^n
\t_k^*\big]^{\widetilde{W}}\,,
\end{equation}
we define $\widetilde{\gamma}(z)$
to be the unique extension of
\cref{hatgam=gamk} preserving sums and products in this presentation.
By formula \cref{evGZdem},
we see that formula \cref{evGZ} clearly holds
for all $\lambda\in\overline{\Gamma}_+^*$ and $\nu\in\Gamma(\lambda)$,
so that by \cref{Gammaldef}, we see that the values of
$\widetilde{\gamma}(z)$ as a polynomial over
$\R[\bigoplus_{k=1}^n\t_k^*]$ are prescribed on
the integral elements in an open cone of $\bigoplus_{k=1}^n\R^k$,
which implies as in \cref{HCth} that formula \cref{evGZ} uniquely determines
$\widetilde{\gamma}(z)$ as a polynomial over
$\R[\bigoplus_{k=1}^n\t_k^*]$.
This shows in particular that $\widetilde{\gamma}(z)$ does not depend
on the chosen presentation of $z\in\AA_n$ and that
it is a morphism of algebras.
This establishes the existence of
the morphism \cref{hatgamfla}.

To see that the morphism \cref{hatgamfla} is injective,
assume by contradiction that there exists a non-zero
$z\in\AA_n$ whose action via \cref{Phi} on every irreducible representation
vanishes identically, and let $P\in\R[\u(n)^*]$ be such
that
\begin{equation}\label{z=symP}
z=\sym{P}\in U(\u(n))
\end{equation}
by the Poincaré-Birkhoff-Witt \cref{PBWprop}.
In particular, there exists $\lambda\in\g^*$ such that $P(\lambda)\neq 0$,
and since any element in $G$ is conjugate to an element in $T\subset G$,
up to replacing $z\in\AA_n$ by $\Ad_g\,z\in\AA_n$ for $g\in G$ such that
$\Ad_g^*\,\lambda\in\t^*$, which also vanishes
on every irreducible representation, one can assume that $\lambda\in\t^*$.
Hence $P\in\R[\u(n)^*]$ does not vanish identically on $\t^*$, and since
$\Gamma^*_+$ consists of the integral elements in the open cone
$\overline{\t^*_+}\subset\t^*$ by \cref{LT} and \cref{Gammadef},
it cannot vanish identically on $\Gamma^*_+$, so that
there exists $\lambda\in\Gamma_+^*$ such that $P(\lambda)\neq 0$.

Now by \cref{Qp=Phi,BWcor2},
for any $\lambda\in\Gamma_+^*$, definition \cref{z=symP} implies
that the operator $Q_p(P)\in\End(H^0(X_\lambda,L_\lambda^p))$
vanishes for all $p\in\N$. On the other hand,
letting $\lambda\in\Gamma_+^*$ such that $P(\lambda)\neq 0$,
since the moment map $\mu:X_\lambda\hookrightarrow\u(n)^*$
of \cref{KKSprop} is just the inclusion,
 the function
$\mu^*P\in\cinf(X_\lambda,\R)$ does not vanish identically.
Now if $Q_p(P)=0$ for all $p\in\N$,
\cref{L=Tp} implies that $T_p(\mu^*P)=O(p^{-1})$
as $p\to +\infty$ for the operator norm,
which then contradicts the norm correspondence \cref{BMS0}
of \cref{BMS}. This shows that \cref{z=symP} cannot vanish
on the irreducible representation $V(p\lambda)$ of highest weight
$p\lambda$ for all $p\in\N$, showing
that the morphism \cref{hatgamfla}
is injective and concluding the proof.

\end{proof}



%

\subsection{Gelfand-Zetlin systems}
\label{GZsystsec}

Recall from \cref{coadsec} that for any $\lambda\in\overline{\Gamma}_+^*$, the
action of $U(n)$ on the assocated coadjoint orbit
$X_\lambda\subset\u(n)^*$
is Hamiltonian with moment map $\mu:X_\lambda\hookrightarrow\u(n)^*$
given by the inclusion.
For any $1\leq k\leq n$,
write $p_k:\u(n)^*\rightarrow\u(k)^*$ for
the projection associated with the the inclusion $U(k)\subset U(n)$
induced by the sequence of inclusions \cref{GLinc}.
Then the map
\begin{equation}\label{mukdef}
\mu_k:=p_k\circ\mu: X_\lambda\longrightarrow\u(k)^*\,,
\end{equation}
satisfies the Kostant formula \cref{Kostantmudef}, hence
is a moment map
for the action of the subgroup $U(k)\subset U(n)$.
The following definition is adapted from \cite[p.\,119]{GS83}.

\begin{defi}\label{GZsystemdef}
For any $\lambda\in\overline{\Gamma}_+^*$,
the \emph{Gelfand-Zetlin system} over the coadjoint
orbit $X_\lambda$ is the collection of functions
\begin{equation}
H_j^{(k)}=\mu_k^*\,s_j^{(k)}\in\cinf(X_\lambda,\R)
\quad\text{ for all }1\leq j\leq k\leq n-1\,,
\end{equation}
where for all $1\leq k\leq n$ and all $1\leq j\leq k$,
the polynomial $s_j^{(k)}\in\R[\u(k)^*]^{U(k)}$
is given by the $j^{\text{th}}$ coefficient of the
characteristic polynomial
over the set of $k\times k$ Hermitian matrices under the identification
\cref{Herm=un}.
\end{defi}

We then have the following basic result from
\cite[Prop.\,3.1,\,p.\,119]{GS83}, mirroring the analogous
\cref{GZalgprop} on the side of representation theory.

\begin{prop}\label{GZsystemlem}
For any $\lambda\in\overline{\Gamma}^*_+$,
the Gelfand-Zetlin system of \cref{GZsystemdef}
is a family of Poisson-commuting functions inside $\cinf(X_\lambda,\R)$,
\end{prop}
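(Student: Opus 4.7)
The plan is to reduce the computation of Poisson brackets on $X_\lambda$ to the ambient space $\u(n)^*$ and then to apply Thimm's trick along the chain of subgroups \cref{GLinc}, which is the original strategy of Guillemin and Sternberg. First, I would use the fact that the coadjoint orbit $X_\lambda \subset \u(n)^*$ is a symplectic leaf of the Kirillov--Kostant--Souriau Poisson structure on $\u(n)^*$, so that the Poisson bracket of two functions on $X_\lambda$ can be computed by extending them arbitrarily to $\u(n)^*$, computing the KKS bracket there, and restricting to $X_\lambda$. Writing $\mu_k = p_k\circ\mu$ where $\mu:X_\lambda\hookrightarrow\u(n)^*$ is the inclusion and $p_k:\u(n)^*\to\u(k)^*$ is the projection dual to \cref{GLinc}, we have $H_j^{(k)}=(p_k^*s_j^{(k)})|_{X_\lambda}$, so it suffices to show that $\{p_k^*s_j^{(k)},\,p_\ell^*s_i^{(\ell)}\}\equiv 0$ on $\u(n)^*$ for all admissible indices.

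By symmetry I may assume $k\leq\ell$. Because $p_\ell$ is $U(\ell)$-equivariant and $s_i^{(\ell)}\in\R[\u(\ell)^*]^{U(\ell)}$ as a coefficient of the characteristic polynomial, the function $p_\ell^*s_i^{(\ell)}$ is $U(\ell)$-invariant, and in particular $U(k)$-invariant since $U(k)\subset U(\ell)$ via \cref{GLinc}.

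The central step is to identify the Hamiltonian vector field of the collective Hamiltonian $p_k^*s_j^{(k)}$ on $\u(n)^*$. Using the canonical identification $\u(k)^{**}\simeq\u(k)$, the differential of $p_k^*s_j^{(k)}$ at any $\beta\in\u(n)^*$ lies in $\u(k)\subset\u(n)$, so the KKS formula for the Hamiltonian vector field shows that its value at $\beta$ coincides with the infinitesimal coadjoint action by an element of $\u(k)\subset\u(n)$. Applying this vector field to the $U(k)$-invariant function $p_\ell^*s_i^{(\ell)}$ therefore yields zero, which gives the desired vanishing of the Poisson bracket.

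The key conceptual point, rather than an obstacle, is this identification of the Hamiltonian flow of a collective Hamiltonian pulled back via $p_k$ with an infinitesimal coadjoint action by $\u(k)$; it is the reason Thimm's trick produces Poisson-commuting families from nested chains of subgroups such as \cref{GLinc}. Once this identification is in hand, the proof reduces to the invariance of the Casimirs $s_i^{(\ell)}\in\R[\u(\ell)^*]^{U(\ell)}$ combined with the inclusion $U(k)\subset U(\ell)$.
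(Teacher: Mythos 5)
Your proof is correct and is essentially the argument the paper gives: both are Thimm's trick, reducing the bracket to a Lie--Poisson computation on the dual of a Lie algebra and concluding from the $\Ad$-invariance of the characteristic-polynomial coefficients. The only cosmetic difference is that the paper pulls everything back to $\u(k_2)^*$ via the moment map $\mu_{k_2}$ (a Poisson map) and invokes the Casimir property of $s_i^{(k_2)}$ there, whereas you work on $\u(n)^*$ and identify the Hamiltonian vector field of the collective Hamiltonian $p_k^*s_j^{(k)}$ with an infinitesimal coadjoint action by $\u(k)$.
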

\begin{proof}
Following the proof of \cref{GZalgprop}, let $1\leq k_1,\,k_2\leq n-1$
be such that $k_1\leq k_2$,
and note that for any $1\leq j\leq k_1$ and
$1\leq i\leq k_2$, we have
\begin{equation}\label{bracketH}
\{H_{j}^{(k_1)},H_{i}^{(k_2)}\}
=\mu_{k_2}^*\{p_{k_2,k_1}^*s_j^{(k_1)},s_i^{(k_2)}\}\,,
\end{equation}
where $p_{k_2,k_1}:\u(k_2)^*\to\u(k_1)^*$ denotes the canonical projection
induced by the inclusion $\u(k_1)\subset\u(k_2)$, and where the Poisson bracket on
$\R[\u(k_2)^*]$ is defined by the natural extension of the Lie bracket
of $\u(k_2)$ through the identification \cref{Rg=Sg}. Now as
$s_i^{(k_2)}\in\R[\u(k_2)^*]^{U(k_2)}$ vanishes under the adjoint
action of $\u(k_2)$, the bracket \cref{bracketH} necessarily vanishes,
which concludes the proof.
\end{proof}

Recall now by \cref{Ol=GGl} and \cref{T=Glambda} that if
$\lambda\in\Gamma^*_+$ belongs to the set of dominant
regular weights \cref{Gamma0def},
then we have
\begin{equation}
\dim X_\lambda=\dim U(n)-\dim T=n(n-1)\,.
\end{equation}
By the celebrated \emph{Arnold-Liouville theorem}
\cite[Chap.10,\,\S\,50]{Arn78},
\cref{GZsystemlem} then implies the existence of
\emph{action-angle coordinates} over the open set $U\subset\R^{\frac{n(n-1)}{2}}$
of regular points inside the image of the map
\begin{equation}\label{Hmap}
\begin{split}
H: X_\lambda&\longrightarrow\R^{\frac{n(n-1)}{2}}\\
x&\longrightarrow(H_j^{(k)}(x))_{1\leq j\leq k\leq n-1}\,,
\end{split}
\end{equation}
which means that there exists a diffeomorphism
$\Psi:\Delta^0\to U$
with $\Delta^0\subset\R^{\frac{n(n-1)}{2}}$ open
and a free Hamiltonian action
of the torus $\IT^{\frac{n(n-1)}{2}}$ on
$H^{-1}(U)$ with moment map
\begin{equation}\label{M=PhiH}
M:=\Psi^{-1}\circ H:H^{-1}(U)\longrightarrow\Delta^0\subset\R^{\frac{n(n-1)}{2}}\,.
\end{equation}
%
%
%
To make such action-angle coordinates explicit,
let $1\leq k\leq n$,
and for any $\alpha\in X_\lambda$,
consider the coadjoint orbit
$X_{\mu_k(\alpha)}\subset\u(k)^*$
passing through its image $\mu_k(\alpha)\in\u(k)^*$
under the moment map
\cref{mukdef} for the action of $U(k)\subset U(n)$.
By definition \cref{Wdef}, we know that the intersection
$X_{\mu_k(\alpha)}\cap\t^*_k$ is an orbit of the
Weyl group $W=\mathfrak{S}_k$, and since it acts simply and transitively
on Weyl chambers, the intersection of $X_{\mu_k(\alpha)}$
with the closure
$(\overline{\t_k})_+^*\subset\t_k^*$ of the positive Weyl
chamber \cref{Weylchamber}
%
%
consists of only one point
\begin{equation}\label{Mkalphadef}
M^{(k)}(\alpha)=(M^{(k)}_1(\alpha),M^{(k)}_2(\alpha),\cdots,
M^{(k)}_k(\alpha))\in
\t_k^*\simeq\R^k\,.
\end{equation}
Following for instance \cite[\S\,A.1]{FH91},
recall on the other hand that for any $1\leq j \leq k\leq n$,
the $j^{\text{th}}$ coefficient of the characteristic polynomial
$s_j^{(k)}\subset\R[\u(k)^*]^{U(k)}$
used in \cref{GZsystemdef} restricts over $\t_k^*\simeq\R^n$
to the $j^{\text{th}}$ elementary polynomial, which we still write
$s_j^{(k)}\subset\R[\t_k^*]^{\mathfrak{S}_k}$.
We then have the following fundamental result of Guillemin and Sternberg
in \cite{GS83b}, which we present in the form given in \cite[\S\,5]{GS83}.

\begin{theorem}\label{GZsystemprop}
{\cite[\S\,4]{GS83b}}
The map
\begin{equation}\label{GZpol}
\begin{split}
M:X_\lambda &\longrightarrow\R^{\frac{n(n-1)}{2}}\\
\alpha &\mapsto (M^{(k)}_j(\alpha))_{1\leq j\leq k\leq n-1}\,,
\end{split}
\end{equation}
is a continuous map, whose image is the convex polytope
\begin{multline}
\Delta_\lambda=\{(\nu_j^{(k)})_{1\leq j\leq k\leq n-1}\in\R^{\frac{n(n-1)}{2}}~\big|
~\lambda_j\geq\nu_j^{(n-1)}\geq\lambda_{j+1}\,,\,\\
\nu_j^{(k)}\geq\nu_j^{(k-1)}\geq \nu_{j+1}^{(k)}\,,
\textup{ for all }1\leq j<k\leq n\}\,.
\end{multline}
Furthermore, the open subset $M^{-1}(\Delta_\lambda^0)\subset X_\lambda$
over the interior $\Delta_\lambda^0\subset\Delta_\lambda$
is a dense open set over which the map \cref{GZpol} is smooth and regular,
and the diffeomorphism
\begin{equation}\label{Phi-1}
\begin{split}
\Psi:\Delta_\lambda^0& \longrightarrow U\subset\R^{\frac{n(n-1)}{2}}\\
\nu &\longmapsto (s_j^{(k)}(\pi_k(\nu)))_{1\leq j\leq k\leq n-1}\,,
\end{split}
\end{equation}
induces action-angle coordinates for the map
\cref{Hmap} over the open set
$U:=\Psi(\Delta_\lambda^0)$ of its regular values,
with moment map
for the Hamiltonian $\IT^{\frac{n(n-1)}{2}}$-action
over $H^{-1}(U)\subset X_\lambda$ given by \cref{GZpol}.

Finally, the Bohr-Sommerfeld fibres of the map \cref{GZpol} over
the interior
$\Delta_\lambda^0\subset\Delta_\lambda$ are the
fibres over $\Delta_\lambda^0\cap\Z^n$.
\end{theorem}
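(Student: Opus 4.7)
The plan is to establish the four assertions in turn, all resting on the observation that under the identification \eqref{Herm=un}, the map $\mu_k:X_\lambda\to\u(k)^*$ sends a Hermitian matrix to its top-left $k\times k$ principal submatrix, so by \cref{GZsystemdef} the value $M_j^{(k)}(\alpha)$ is precisely the $j$-th largest eigenvalue of that submatrix. Continuity of $M$ in \cref{GZpol} is then immediate from the continuous dependence of eigenvalues of Hermitian matrices on their entries, and the identification of the image with $\Delta_\lambda$ reduces to the \emph{Cauchy interlacing inequalities}, which assert that the eigenvalues of a Hermitian matrix interlace those of any principal submatrix obtained by deleting a row and its matching column. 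Iterating from $k=n$ down to $k=1$ gives the inclusion $M(X_\lambda)\subseteq\Delta_\lambda$, and the reverse inclusion follows from a classical result of Fan and Pall producing, for any interlacing sequence $(\nu_j^{(k)})$, a Hermitian $n\times n$ matrix whose top-left $k\times k$ blocks have the prescribed spectra.

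On the open set $\Delta_\lambda^0$, strict interlacing makes all intermediate spectra simple, so eigenvalues depend smoothly on $\alpha$, yielding smoothness of $M$. Regularity of $M$ there follows from the variational formula $dM_j^{(k)}|_\alpha=P_j^{(k)}(\alpha)$, where $P_j^{(k)}(\alpha)\in\u(n)$ is the extension by zero of the rank-one spectral projector of the $k\times k$ block onto its $j$-th eigenline; the $P_j^{(k)}(\alpha)$ for varying $(j,k)$ are linearly independent on the stratum of strict interlacing, since distinct spectral projectors of nested principal submatrices can share no common direction when the corresponding eigenvalues are all simple and distinct. The diffeomorphism $\Psi$ of \cref{Phi-1} is just the change of variables given by the elementary symmetric polynomials $s_j^{(k)}$, which is a diffeomorphism on the locus of strictly decreasing tuples in $\R^k$ by the standard inverse-function argument on Newton's identities.

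The heart of the theorem is the action-angle structure, which I would extract via the \emph{Thimm trick}. By \cref{KKSprop}, the Hamiltonian vector field of $M_j^{(k)}$ at $\alpha$ is $(dM_j^{(k)}|_\alpha)^{X_\lambda}=(P_j^{(k)}(\alpha))^{X_\lambda}$, and along this flow $P_j^{(k)}$ is itself preserved because $[P_j^{(k)},P_j^{(k)}]=0$. Therefore the flow at time $t$ is given by coadjoint action of $\exp(2\pi\sqrt{-1}\,t\,P_j^{(k)}(\alpha))$, which is $1$-periodic since the projector $P_j^{(k)}$ has eigenvalues $0$ and $1$. Poisson commutativity inherited from \cref{GZsystemlem}, applied after the polynomial change of variables encoded by $\Psi$, shows these circle flows commute, assembling into a Hamiltonian $\IT^{\frac{n(n-1)}{2}}$-action on $M^{-1}(\Delta_\lambda^0)$ with moment map $M$. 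Freeness on an open dense subset then follows from the dimension count $\dim X_\lambda=n(n-1)$ provided by \cref{Ol=GGl,T=Glambda}, which forces the fibres of $M$ to be Lagrangian tori of the maximal possible dimension.

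Finally, for the Bohr-Sommerfeld statement I would apply the general criterion in action-angle coordinates: on each Lagrangian torus $M^{-1}(\nu)\subset M^{-1}(\Delta_\lambda^0)$, the connection $\nabla^{L_\lambda}|_{M^{-1}(\nu)}$ is flat by \cref{preq}, and its holonomy around the cycle generated by the $j$-th circle at level $k$ equals $\exp(2\pi\sqrt{-1}\,\nu_j^{(k)})$ by the same Kostant formula \cref{Kostantmufla} that identifies $M_j^{(k)}$ as the component of the moment map along this circle. Hence the Bohr-Sommerfeld condition of \cref{BS} at level $1$ is equivalent to all $\nu_j^{(k)}$ being integers. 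The main obstacle in this plan is not the local picture over $\Delta_\lambda^0$ but the global continuity of $M$ and the polytope image across the degenerate strata where eigenvalues collide, since the $M_j^{(k)}$ cease to be smooth there; handling these strata requires the careful stratified analysis of \cite{GS83b}, which extends the Thimm construction across the closed faces of $\Delta_\lambda$.
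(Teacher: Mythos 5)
The paper offers no proof of this statement: it is quoted directly from Guillemin--Sternberg \cite{GS83,GS83b}, and the only argument supplied in the text is the remark immediately following the theorem, namely that $\Psi$ is a diffeomorphism because the elementary symmetric functions give a diffeomorphism on strictly decreasing tuples. Your reconstruction (Cauchy interlacing for the image, the Thimm trick for the periodic commuting flows, and the holonomy computation for the Bohr--Sommerfeld fibres) is precisely the argument of the cited source, and it is correct in outline. Two points are glossed and worth tightening: first, the reason $P_j^{(k)}(\alpha)$ is constant along its own Hamiltonian flow is not ``$[P_j^{(k)},P_j^{(k)}]=0$'' (which is vacuously true of any matrix) but that the spectral projector commutes with $\mu_k(\alpha)$ itself, so conjugation by $\exp\big(tP_j^{(k)}(\alpha)\big)$ fixes the $k\times k$ block and hence fixes the projector, which is what makes the flow a genuine $1$-periodic circle action. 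Second, both the regularity of $M$ over $\Delta_\lambda^0$ and the density of $M^{-1}(\Delta_\lambda^0)$ in $X_\lambda$ need a word: regularity is cleanest via the dimension count $\dim X_\lambda=n(n-1)$ once local freeness of the torus action on the fibres is established (rather than via the asserted linear independence of the projectors, which as stated conflates independence in $\u(n)$ with independence of their restrictions to $T_\alpha X_\lambda$), and density holds because the strict-interlacing locus is the complement of a proper real-analytic subvariety cut out by discriminants of the principal minors.
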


Note that the map \cref{Phi-1} sends each decreasing sequence
$\nu^{(k)}_1>\nu^{(k)}_2>\cdots>\nu^{(k)}_k$ to the coefficients of the
polynomial $\prod_{j=1}^k(X-\nu_j^{(k)})$, which is clearly 
a diffeomorphism on its image.
%
%
As explained in \cite[p.\,121]{GS83},
under the natural identification
\cref{Herm=un} for all $1\leq k\leq n$,
the Hermitian matrices $\mu_k(\alpha)\in\Herm(\C^k)$
are the $k\times k$ upper-left minors of $\alpha\in\Herm(\C^n)$,
with non-increasing sequence of eigenvalues
$M^{(k)}_1(\alpha)\geq\cdots\geq
M^{(k)}_k(\alpha)$ given by \cref{Mkalphadef}.
We then have the natural bijection
\begin{equation}\label{WGZ}
\begin{split}
\Gamma(\lambda)&\xrightarrow{~\sim~}\Delta_\lambda\cap\Z^{\frac{n(n-1)}{2}}\\
(\nu_j^{(k)})_{1\leq j\leq k\leq n}&\mapsto (\nu_j^{(k)})_{1\leq j\leq k\leq n-1}
\,,
\end{split}
\end{equation}
which is interpreted in \cite[Prop\,6.5]{GS83} as the correspondence between
Bohr-Sommerfeld quantization and holomorphic quantization in the case
of coadjoint orbits of $U(n)$.
Recalling $\overline{\rho}\in\Gamma_+^*$ from \cref{rhobar},
we then get that for any $p\in\N$, the set
\begin{equation}\label{WpGZ}
\Gamma_p:=\Delta_{p\lambda+\overline{\rho}}\cap\Z^{\frac{n(n-1)}{2}}\,,
\end{equation}
parametrizes the elements of a Gelfand-Zetlin basis 
of the irreducible representation $V(p\lambda+\overline{\rho})$
of $U(n)$ with highest weight $p\lambda+\overline{\rho}$, and 
is such that $\frac{1}{p}\Gamma_p\subset\R^{\frac{n(n-1)}{2}}$
becomes dense in $\Delta_\lambda$ as $p\to+\infty$, in the sense that
for any open set $U\subset\R^{\frac{n(n-1)}{2}}$ containing $\Delta_\lambda$,
there is $p_0\in\N$ such that $\frac{1}{p}\Gamma_p\subset U$
for all $p\geq p_0$, and for
any $v\in\Delta_\lambda$ there is a sequence
$(v_p\in\frac{1}{p}\Gamma_p)_{p\in\N}$ such that $v_p\to v$ as $p\to+\infty$. 



\subsection{Proof of \cref{mainth}}
\label{proofmainthsec}

Let $\lambda\in\Gamma^*_+$ be a regular weight as in \cref{GammaregUndef},
and consider the associated coadjoint
orbit $X_\lambda\subset\mathfrak{g}^*$, endowed with
its natural symplectic structure $\om$ defined by
\cref{KKS} and the compatible complex structure defined
by \cref{T10O}.  
Recall also the holomorphic Hermitian line bundle
$(L_\lambda,h^{L_\lambda},\nabla^{L_\lambda})$ defined by \cref{Lgamdef},
which prequantizes $(X_\lambda,\om)$ in the sense of \cref{preq}
by \cref{Lpreqprop}. For any $v\in\Delta_\lambda^0\subset\Delta_\lambda$, we write
\begin{equation}
\iota_\nu:\Lambda_\nu:=M^{-1}(\nu)\longhookrightarrow X
\end{equation}
for the fibre of the action-angle coordinates \cref{GZpol} over
$v$.

The proof of \cref{mainth} is based on a result of Charles in \cite{Cha06},
where \cref{corintergal} is established for a more general notion of
Lagrangian states than \cref{Lagstate}. In particular,
this relies on the existence of a metaplectic structure over $(X_\lambda,\om)$,
that is a square root $K_{X_\lambda}^{1/2}$ of the canonical bundle
\cref{KX} of $X_\lambda$. To describe it in our setting,
recall formula \cref{rhon} for the half-sum of positive
roots $\rho_n\in\t^*$ of $\u(n)$, and write $\overline{\rho}\in\Z^n$
for its integral part as in formula \cref{rhobar}.
In the case $n\in\N^*$ is odd, we see that
$\rho=\overline{\rho}\in\Gamma_*^+$, so that by \cref{metacoad},
the canonical line bundle $K_{X_\lambda}$ over
the coadjoint orbit $(X_\lambda,\om)$ endowed with the complex structure
induced by \cref{T10O} admits a natural square root given by
$K_{X_\lambda}^{1/2}:=L_{-\overline{\rho}}$.
In the case when $n\in\N^*$ is even, we get by \cref{cancoad} that
\begin{equation}
K_{X_\lambda}:=L_{-2\rho}=L_{-2\overline{\rho}}\otimes L_{\theta}\,,
\end{equation}
with $\theta=(1,1,\cdots,1)\in\Gamma^*\simeq\Z^n$.
From its definition \cref{Lgamdef}, we see that the holomorphic Hermitian line bundle $(L_{\theta},h^{L_{\theta}})$
is trivial over $X_\lambda$, albeit endowed with a non-trivial
lift of the $U(n)$ action on $X_\lambda$. This means
that for any $n\in\N^*$, the holomorphic Hermitian line bundle
\begin{equation}\label{KX=Ltheta}
(K_{X_\lambda}^{1/2},h^{K_{X_\lambda}^{1/2}})
:=(L_{-\overline{\rho}},h_{-\overline{\rho}})
\end{equation}
defines a square root of $K_{X_\lambda}$ over $X_\lambda$.
%
%
By the Borel-Weil \cref{BWth},
for any $p\in\N$, the unitary irreducible representation
$V(p\lambda+\overline{\rho})$ of $U(n)$ with highest weight
$p\lambda+\overline{\rho}$ satisfies
the following identity
of unitary representations of $U(n)$,
\begin{equation}\label{Vplambda+rho}
V(p\lambda+\overline{\rho})=(H^0(X_\lambda,L^p_\lambda
\otimes K_{X_\lambda}^{1/2}),
\<\cdot,\cdot\>_{L^2})\,,
\end{equation}
where the lift of the action of $U(n)$ on $K_{X_\lambda}^{1/2}$ is determined by
\cref{KX=Ltheta}.

%

For any $k\in\N$ and any $1\leq j\leq k$, recall that
$s_j^{(k)}\in\R[\u(k)^*]^{U(k)}$ stands for
the $j^{\text{th}}$ coefficient of the characteristic polynomial
over the set of $k\times k$ Hermitian matrices as in \cref{GZsystemdef}
and
$p_k:\u(n)^*\hookrightarrow\u(k)^*$ for the projection induced
by the sequence of inclusion \cref{GLinc}, so that
$p_k^*s_j^{(k)}\in\R[\u(n)^*]$
is a homogeneous polynomial of degree $j\in\N$.
Since on the other hand the
symmetrization map \cref{sym} is equivariant with respect
to the action of $U(n)$, we know that
\begin{equation}\label{symps}
\sym(p_k^*s_j^{(k)})\in Z[U(\u(k)]\subset\AA_n\,,
\end{equation}
where $\AA_n\subset U(\u(n))$ is the Gelfand-Zetlin subalgebra
of \cref{GZalgdef}. Since the coefficients of the characteristic
polynomial generate $\R[\u(k)^*]^{U(k)}$
and via the bijection \cref{symcenter} with $Z[U(\u(k)]$,
elements of the form \cref{symps}
for all $1\leq j\leq k\leq n-1$ generate the
Gelfand-Zetlin subalgebra of \cref{GZalgdef}.
Furthermore, for any $p\in\N$, the action of $\sym(p_k^*s_j^{(k)})$ on the
irreducible representation $V(p\lambda+\overline{\rho})$ of $U(n)$
with highest weight $p\lambda+\overline{\rho}$ is given via
\cref{Qp=Phi} and \cref{Vplambda+rho} by the operator
$Q_p(\sym(p_k^*s_k^{(j)}))\in\End(V(p\lambda+\overline{\rho}))$,
which are pairwise commuting for all $1\leq j\leq k\leq n-1$
by \cref{GZalgprop}.

Let now $w\in\Delta_\lambda^0$, and consider a sequence
$(w_p\in \frac{1}{p}\Gamma_p)_{p\in\N}$
such that $w_p\to w$ as $p\to+\infty$.
For any $p\in\N$, let
$e_{p w_p}\in V(p\lambda+\overline{\rho})$ be the element of
a Gelfand-Zetlin basis parametrized by $p\,w_p\in\Gamma_p$
as in \cref{WpGZ}.
By \cref{hatgamprop}, these elements
are common eigenvectors of the operators $Q_p(\sym(p_k^*s_k^{(j)}))
\in\End(V(p\lambda+\overline{\rho}))$ for all $1\leq j\leq k\leq n-1$,
which by \cref{corL=Tp} satisfy
\begin{equation}\label{Qpse=He}
\begin{split}
\frac{1}{(2\pi\sqrt{-1} p)^j}Q_p(\sym(p_k^*s_k^{(j)}))\,e_{pw_p}
&=\frac{1}{(2\pi\sqrt{-1} p)^j}\,
\widetilde\gamma(z)(2\pi\sqrt{-1}(\nu+\widetilde{\rho}))\,e_{pw_p}\\
&=(H_j^{(k)}(w_p)+O(p^{-1}))\,e_{pw_p}\,,
\end{split}
\end{equation}
where the functions $H_j^{(k)}\in\cinf(X,\R)$ for all $1\leq j\leq k \leq n-1$
form the Gelfand-Zetlin system introduced in \cref{GZsystemdef},
by definition \cref{mukdef} of the
application $\mu_k:X\to\u(k)^*$.
On the other hand,
\cref{L=Tp}
gives the following asymptotic estimate in the sense of
the operator norm of \cref{Vplambda+rho} as $p\to+\infty$,
\begin{equation}\label{Qps=TpH}
\frac{1}{(2\pi\sqrt{-1} p)^j}Q_p(p_k^*s_j^{(k)})
=T_p(H_j^{(k)})+O(p^{-1})\,.
\end{equation}
By \cref{GZsystemlem}, the functions $H_j^{(k)}\in\cinf(X,\R)$
are pairwise Poisson-commuting
for all $1\leq j\leq k \leq n$.
The result of Charles in \cite[Th.\,5.2]{Cha10b} applied to the family
\cref{Qps=TpH} of pairwise commutating operators for all $1\leq j\leq k \leq n$
then implies that the sequence
$\{e_{p w_p}\in H^0(X_\lambda,L_\lambda\otimes K_{X_\lambda}^{1/2})\}_{p\in\N}$
is a Lagrangian state
associated with the sequence of Lagrangian submanifolds
$\{(\Lambda_{w_p},\zeta^p,f_{w_p})\}_{p\in\N}$
in the sense of \cite[\S\,3]{Cha06}, with
sections
$f_{w_p}\in\cinf(\Lambda_{w_p},\iota_p^*K_{X_\lambda}^{1/2})$
satisfying
\begin{equation}
f^2_{w_p}=dv_{w_p}\in\cinf(\Lambda_{w_p},\det(T^*_\C\Lambda_{w_p}))
\end{equation}
in the identification \cref{isoLKX}, where $dv_{w_p}$
is the volume form of volume $1$ over $\Lambda_{w_p}$ invariant
by the free torus action of $\IT^{\frac{n(n-1)}{2}}$ with moment map
\cref{M=PhiH} induced by the action-angle coordinates of \cref{GZsystemprop}.
It is determined by that fact that
\begin{equation}\label{dvxi=1}
dv_{w_p}\big(\xi_{1}^{X_\lambda},\cdots
\xi_{\frac{n(n-1)}{2}}^{X_\lambda}\big)\equiv 1\,,
\end{equation}
over $\Lambda_{w_p}$, where 
$\{\xi_{j}\}_{j=1}^{\frac{n(n-1)}{2}}$
is the canonical basis of the torus $\IT^{\frac{n(n-1)}{2}}$
acting on $M^{-1}(\Delta^0_\lambda)$ from the definition \cref{M=PhiH}
of action-angle coordinates.

Let now $v\in\Delta_\lambda^0$ and consider a sequence
$(v_p\in \frac{1}{p}\Gamma_p)_{p\in\N}$
such that $v_p\to v$ as $p\to+\infty$.
As any $g\in U(n)$ acts by a biholomorphic isometry,
applying \cite[Th.\,5.2]{Cha10b} as before,
the sequence
$\{ge_{pv_p}\in V(p\lambda+\overline{\rho})\}_{p\in\N}$
is also a Lagrangian state
associated with the sequence $\{(g\Lambda_{v_p},g\zeta^p,gf_{v_p})\}_{p\in\N}$
of Bohr-Sommerfeld submanifolds in the sense
of \cite[\S\,3]{Cha06}, so that $gf_{v_p}=g.dv_{v_p}$.
Then if $g\Lambda_v\cap\Lambda_w=\0$,
the rapid vanishing \cref{mainflaexp}
readily follows from \cite[Th.\,5.1]{Cha10b}, as in \cref{theonorme}.

To establish the asymptotics \cref{mainfla},
let us first point out that \cref{Lagstate} of a Lagrangian
state is only a particular case of the general
notion of a Lagrangian state in the sense of
\cite[\S\,3]{Cha06}.
The asymptotics \cref{mainfla} then rather follow from the
formula given in \cite[Th.\,6.1]{Cha10b}
for the Hermitian product of two Lagrangian states with $K=K_{X_\lambda}^{1/2}$,
in strict analogy
with \cref{corintergal}, so that the
asymptotic expansion \cref{mainfla} coincides with the general
form of the formula \cite[Th.\,6.1]{Cha10b}, as computed for instance
in \cite[Th.\,11.1]{Det18},
along with the fact that
$\Vol(\Lambda_{v_p},dv_{v_p})=\Vol(\Lambda_{w_p},dv_{w_p})=1$.
In particular, a description of $\kappa(x)\in\Z/4\Z$
as a Maslov index is given in \cite[\S\,5.4]{Det18}.
As explained
in \cite[\S\,2.3]{Cha03b} and made explicit
in \cite[Th.\,6.1]{Cha10b},
first order asymptotics
of Lagrangian states have a universal behavior,
depending only on the linear symplectic data
at each intersection point of the Lagrangian submanifolds.
Since the local model in flat space for the \cref{Lagstate} of a Lagrangian
state is a Lagrangian state in the sense of
\cite[\S\,3]{Cha06}, and since the first order
asymptotics in \cref{theointergal}, \cref{<u1u2>}, and
\cref{corintergal}, \cref{cor<u1u2>}, are computed using this local model
in \cite{Ioo18b}, one can thus also compute \cref{mainfla} using
\cref{Lagstate} of a Lagrangian state.
We then get formula \cref{mainfla} from formulas \cref{cor<u1u2>},
\cref{lambnda=ueieta} and \cref{dvxi=1}, as well as
the standard fact that 
for all $1\leq j,\,k\leq \frac{n(n-1)}{2}$, we have
\begin{equation}
\om_x(g.\xi_{j}^{X_\lambda},\xi_{k}^{X_\lambda})=\{g_*M_j,M_k\}(x)\,,
\end{equation}
for any $x\in g\Lambda_{v_p}\cap\Lambda_{w_p}$,
since $g\in U(n)$ acts as a Hamiltonian diffeomorphism and since
$\xi_{j}^{X_\lambda}$ is the Hamiltonian vector field over 
$M^{-1}(\Delta_\lambda^0)$ associated with
the $j^{\text{th}}$ component $M_j\in\cinf(X,\R)$ of the map
\cref{GZpol} in $\R^{\frac{n(n-1)}{2}}$
for each
$1\leq j\leq \frac{n(n-1)}{2}$, by its description \cref{M=PhiH} as a moment map
for the $\IT^{\frac{n(n-1)}{2}}$-action on $M^{-1}(\Delta_\lambda^0)$.
This concludes the proof.

\providecommand{\bysame}{\leavevmode\hbox to3em{\hrulefill}\thinspace}
\providecommand{\MR}{\relax\ifhmode\unskip\space\fi MR }
\providecommand{\MRhref}[2]{%
  \href{http://www.ams.org/mathscinet-getitem?mr=#1}{#2}
}
\providecommand{\href}[2]{#2}

\Addresses

\end{document}